\providecommand{\U}[1]{\protect\rule{.1in}{.1in}}
\newcommand{\C}{{\mathbb C}}
\newcommand{\R}{{\mathbb R}}
\newcommand{\Z}{{\mathbb Z}}
\newcommand{\be}{\begin{equation}}
\newcommand{\ee}{\end{equation}}
\newcommand{\ba}{\begin{eqnarray}}
\newcommand{\ea}{\end{eqnarray}}
\renewcommand{\Im}{\operatorname{Im}}
\newtheorem{theorem}{Theorem}
\newtheorem{proposition}[theorem]{Proposition}
\newtheorem{lemma}[theorem]{Lemma}
\newtheorem{definition}[theorem]{Definition}
\newenvironment{proof}[1][Proof]{\noindent\textbf{#1.} }{\ \rule{0.5em}{0.5em}}
\newtheorem{preremark}[theorem]{Remark}
\newenvironment{remark}{\begin{preremark}\rm}{\hfill$\Diamond$\end{preremark}}
\newtheorem{prenotation}[theorem]{Notation}
\numberwithin{equation}{section}
\numberwithin{theorem}{section}
\begin{document}

\title{{Degeneration of K\"ahler structures}\\and half-form quantization of toric varieties}
\author{William D. Kirwin\thanks{current address: Mathematics Institute, University of Cologne, Weyertal 86 -- 90, 50931 Cologne, GERMANY}, Jos\'{e} M. Mour\~{a}o and Jo\~{a}o P. Nunes}
\maketitle

\begin{center}
{Center for Mathematical Analysis, Geometry and Dynamical Systems\\and\\
Department of Mathematics\\ Instituto Superior T\'ecnico\\ Av. Rovisco Pais\\ 1049-001 Lisbon, Portugal}
\end{center}

\bigskip
\begin{abstract}
We study the half-form K\"{a}hler quantization of a smooth symplectic toric
manifold $(X,\omega)$, such that $[ \omega/ 2\pi]- c_{1}(X)/2 \in
H^{2}(X,{\mathbb{Z}} )$ and is nonnegative. We define the half-form corrected
quantization of $(X,\omega)$ to be given by holomorphic sections of a certain
hermitian line bundle $L\to X$ with Chern class $[ \omega/ 2\pi]- c_{1}%
(X)/2$. These sections then correspond to integral points of a ``corrected''
polytope $P_{L}$ with integral vertices. For a suitably translated moment
polytope $P_{X}$ for $(X,\omega)$, we have that $P_{L}\subset P_{X}$ is
obtained from $P_{X}$ by a one-half inward-pointing normal shift along the
boundary.

We use our results on the half-form corrected K\"ahler quantization to motivate a definition of half-form
corrected quantization in the  singular real toric polarization.
Using families of complex structures studied in
\cite{Baier-Florentino-Mourao-Nunes}, which include the degeneration of
K\"{a}hler polarizations to the vertical polarization, we show that, under
this degeneration, the half-form corrected $L^{2}$-normalized monomial holomorphic
sections converge to Dirac-delta-distributional sections supported on the
fibers over the integral points of $P_{L}$, which correspond to corrected Bohr--Sommerfeld fibers.
This result and the limit of the corrected connection, with curvature singularities
along the boundary of $P_X$,  justifies the direct definition we give for the corrected
quantization in the singular real toric polarization. We show that the space of quantum states
for this definition coincides with the space obtained via degeneration of the K\"ahler quantization.

We also show that the BKS pairing
between K\"{a}hler polarizations is not unitary in general. On the other hand, the
unitary connection induced by this pairing is flat.
\end{abstract}

\pagebreak
\tableofcontents

\section{Introduction}

Ever since \'Snyaticki proposed cohomological wave functions to construct the
quantum Hilbert space corresponding to geometric quantization in real
polarizations \cite{Sniatycki}, the question of how to address the case of
real polarizations with singular fibers has resisted full treatment. In
\cite{Hamilton07}, Hamilton proposed the extension of \'Snyaticki's
definition to the case with singular fibers by also considering the higher
cohomology of the same sheaf of polarized smooth sections of the
prequantization bundle. His results show, however, that the formalism will
have to be modified in order to obtain the expected quantization even in the case
of the harmonic oscillator. Indeed, for singularities of elliptic type
(like in the case of toric varieties) Hamilton obtains states corresponding
only to non-singular Bohr--Sommerfeld leaves. In the toric case, these
correspond to interior integral points of the moment polytope. If one doesn't
take into account the half-form correction, however, one expects the quantization to
include \emph{all} states
corresponding to the integral points of the polytope, including those on the
boundary. Only in this way, for the compact case, does one get the same dimension of the space of
quantum states as for the holomorphic polarizations.

In \cite{Baier-Florentino-Mourao-Nunes}, a solution of this problem was
proposed within the context of toric varieties without the half-form correction.
The real polarized sections are defined directly as distributional solutions
of the equations of covariant constancy and can also be obtained by
degenerating appropriately normalized K\"ahler polarized sections. These
normalized holomorphic sections converge, under the degeneration, to Dirac-delta-distributional sections
supported on the Bohr--Sommerfeld fibers which correspond to integral points of the moment polytope, including the ones on the boundary. The corresponding Bohr--Sommerfeld orbits are increasingly singular (lower dimensional) as the codimension of the face of the polytope on which they are
increases.

On the other hand, one would expect these quantum states not to be present in a quantization
in the real toric polarization correctly reproducing the ``vacuum energy shift''
of the harmonic oscillator. We show that this expected behavior of the quantum states
is precisely achieved by our definition of the half-form corrected K\"ahler quantizations
deforming continuously to the real polarization.

An immediate obstacle to defining the half-form quantization in a K\"ahler polarization
is the fact that the canonical bundle $K_{X}$ of a toric
variety may not admit a square root, for instance for ${\mathbb{C}} {\mathbb{P}}^{2n}$. (See the appendix for
a discussion of the existence of $\sqrt{K_X}$ in terms of the fan of $X$.)
In Section \ref{sec:corrQ}, we consider K\"ahler quantization of a compact toric manifold $X$ with
symplectic structure $\omega$ such that
$\frac{[\omega]}{2\pi}-\frac{c_{1}(X)}{2}\in H^{2}(X,{\mathbb{Z}})$ and is nonnegative.
(This integrality condition has also been proposed in \cite{C78}.)
In the case when
$c_{1}(X) $ is even, so that $K_{X}$ admits a square root, one is then reduced
to the usual setting for half-form quantization.
Let $L\rightarrow X$ be an hermitian line bundle with connection of curvature
given by $-i{\omega}+\frac{i}{2}\rho$, where $\rho$ is the Ricci form for the
K\"ahler metric on $X$, so that $[\rho/2\pi]\in c_{1}(X)$. When a $\sqrt
{K_{X}}$ exists, this corresponds to taking the usual prequantum connection
plus one-half the Chern--Levi--Civita connection on $K_{X}$, which gives a connection
on $\sqrt{K_{X}}$.

The condition $\frac{[\omega]}{2\pi}-\frac{c_{1}(X)}{2}\in H^{2}(X,{\mathbb{Z}})$ allows us to choose
the moment polytope (see equations (\ref{t11}) and (\ref{halflambdas}) in Section \ref{corrq}),
$P_X = \mu(X)$,
\be\nonumber
P_X = \left\{x \in \R^n \ : \ \ell_j(x) = \nu_j \cdot x + \lambda_j \geq 0, \ \ j = 1, \dots, r\right\} ,
\ee
with  all $\lambda_j$'s half-integral
\begin{equation}\nonumber
\lambda_j \in \frac 12 + \Z, j=1\dots,r ,
\end{equation}
where $x$ are action coordinates and $\nu_j$ is the primitive inward pointing normal vector to the $j$-th facet.
With this choice, there are no integral points in the boundary of $P_X$ and to all
integral points inside $P_X$ there will correspond K\"ahler polarized states, that is holomorphic sections of $L$.
Unlike Hamilton's case however, the integral points start at lattice distance $1/2$ (rather than $1$), from
every facet (see the figures in Remarks \ref{rmk:CP1} and \ref{rmk:CP2}.)

As in the case without the half-form correction, these polarized states will converge, as the polarizations degenerate to the toric real polarization, to delta distributions supported
on the corresponding non-singular orbits.
The degeneration of the equations for polarized sections (see section \ref{subsec:realQuant})
is also consistent with the degeneration of the polarized states as the
Ricci connection term $ i d \theta^j_v$ in (\ref{limitconnvert}) corresponds to a connection with
curvature supported on the inverse image of the $j$-th facet by the moment map.
From the point of view of the real polarization these singular connections are
responsible for the vacuum energy shifts (which correspond to shifted Bohr--Sommerfeld conditions)
as they prevent
the existence of covariantly constant sections supported on the boundary.
Quantization in the real singular toric polarization is then defined
directly in terms of this limit connection. This provides an approach for defining
half-form corrected quantization in real singular polarizations. By finding the
type of singularities of the half-form corrected limit connection, one finds
corrected equations for the real polarized sections. In the toric case, this direct approach
for the definition of the half-form corrected quantization in the singular toric real polarization
gives the same results as the degeneration of K\"{a}hler polarizations (Theorems \ref{thm:realQ} and
\ref{thmdistrib}).

In \cite{Baier-Florentino-Mourao-Nunes}, the convergence to delta-distributions
was achieved by taking $L^{1}$-normalized sections.
In the present paper, however, the half-form correction ensures the nice
behavior of the $L^{2}$-normalized sections in the limit of degenerating
complex structure. This is in agreement with other examples such as
finite-dimensional vector spaces \cite{Kirwin-Wu06} and abelian varieties
\cite{Baier-Mourao-Nunes}.

One of the primary motivations for including the half-form correction is that
it allows for a canonical pairing between quantizations associated to
different complex structures. This pairing is known as the
Blattner--Konstant--Sternberg (BKS) pairing. The BKS pairing between
quantizations associated to two K\"ahler complex structures is nondegenerate,
and hence (since the K\"{a}hler quantizations of a compact toric manifold are
finite dimensional) induces an isomorphism between them. One does not, though,
expect in general that the BKS pairing provides a \textit{unitary}
identification of quantizations associated to different complex structures. In
several common cases, for example for symplectic vector spaces equipped with
translation invariant K\"ahler structures, and for complex Lie groups equipped with
certain families of K\"ahkler structures (which include the canonical
K\"ahler structure), the BKS pairing is unitary (see \cite{Hall02},
\cite{Kirwin-Wu06},\cite{Florentino-Matias-Mourao-Nunes05}%
,\cite{Florentino-Matias-Mourao-Nunes06}). In a few other cases, the BKS
pairing is known to be not unitary, for example for $T^{\ast}S^{2}$
\cite{Rawnsley79}. In most cases, it is not known whether the BKS pairing is
unitary, and conditions for unitarity do not yet seem to be well understood.
We show that the BKS pairing between half-form corrected quantizations of
compact toric varieties is not unitary in general.

We will also consider another method for comparing quantizations associated to
different complex structures. Namely, one can construct a (finite-rank)
Hilbert bundle over the space of toric complex structures
on $X$ with the fiber at a point being the quantum
Hilbert space associated to that complex structure. When the half-form correction is not included,
the quantum Hilbert bundle is a subbundle of a trivial bundle, and hence carries a canonical
connection obtained by
orthogonal projection of the trivial connection. This connection is called the
\emph{quantum connection.} It was first introduced and studied by Axelrod,
della Pietra and Witten in \cite{Axelrod-DellaPietra-Witten} and, from a
slightly different point of view, by Hitchin in \cite{Hitchin90}. See also
\cite{Andersen-Gammelgaard-Lauridsen} for a treatment which includes
the half-form correction.

For linear complex structures on a symplectic vector space, the quantum
connection turns out to be projectively flat, which means that up to a
constant, one may identify all K\"{a}hler quantizations at once.
To extend the connection to the boundary of the space of complex structures,
and thus study their degenerations
and relate real quantizations to K\"{a}hler quantizations,
one must introduce the half-form correction; parallel transport of the
resulting corrected quantum connection, still in the case of linear complex
structures, was studied by the first author and Wu in \cite{Kirwin-Wu06},
where it was found that parallel transport along geodesics with internal
endpoints is just rescaled Bergman projection, while transport along geodesics
with one or two endpoints on the boundary yields the well-known
Segal--Bargmann and Fourier transforms, respectively. These results were recently
extended by Wu to the case of linear quantization of fermions \cite{Wu10}. In
the usual (bosonic) case, one may then quotient by the action of
$\mathbb{Z}^{2n},$ as done by Baier and the second two authors in
\cite{Baier-Mourao-Nunes}, to study degenerations of complex structures on
abelian varieties at the level of $\vartheta$-functions. In a different
direction, in \cite{Florentino-Matias-Mourao-Nunes05} and
\cite{Florentino-Matias-Mourao-Nunes06}, Florentino, Matias and the second two
authors studied the corrected quantum connection on a one-dimensional family
of complex structures on the complexification of a compact Lie group which
degenerates to the vertical polarization of the cotangent bundle of the
underlying real Lie group; here, again, parallel transport with respect to the
quantum connection yields the (generalized) Segal--Bargmann--Hall transform.
In related work, Lempert and Sz\H{o}ke have recently studied
the bundle of quantizations associated to a family of adapted-type complex
structures on Grauert tubes of compact, real-analytic Riemannian manifolds
\cite{Lempert-Szoke10}, although they use a Chern-type connection rather than the BKS construction considered here.

In Subsection \ref{sec:connection},
we show that the quantum connection on the quantum Hilbert bundle induced by the BKS pairing
is flat, so that the quantizations associated to different torus-invariant complex structures
can be canonically identified.

\section{Preliminaries.}

\subsection{\label{subsec:cplx line bdls}Complex line bundles}

We begin with some facts about complex line bundles. Let $E\rightarrow X$ be a
complex line bundle on a manifold $X$ and let $E_{0}=E\setminus\{zero \,\, section\}$ be its frame bundle.
The isomorphism
\begin{align*}
\label{polar}(|\cdot|,arg)\ :\ {\mathbb{C}}^{*}  &  \cong{\mathbb{R}}^{+}
\times U(1)\\
c  &  \mapsto\left(  |c|,\frac{c}{|c|}\right)  ,
\end{align*}
defines a canonical isomorphism (\cite{Weinstein04}, page 6)
\begin{equation}
\label{Edecomp}E\cong|E|\otimes E^{U(1)},\nonumber
\end{equation}
where the complex line bundles $|E|,E^{U(1)}$ are associated to the principal $\C^*$-bundle $E_{0}$, via
the homomorphisms ${\mathbb{C}}^{*} \ni c\mapsto|c|\in{\mathbb{R}}^{+}$ and
${\mathbb{C}}^{*}\ni c\mapsto arg(c)=\frac{c}{|c|}\in U(1)$ respectively.
Following \cite{Weinstein04} we call the line bundle $E^{U(1)}$ the unitarization of $E$.

This isomorphism is given explicitly by
\begin{equation}
E_{p} \ni l \mapsto\left\{
\begin{array}
[c]{ll}%
|l|\otimes l^{U(1)}, & l\neq0\\
0, & l=0,
\end{array}
\right. \nonumber
\end{equation}
where $p\in X, |l|=[(l,1)]_{|\cdot|}=[(lc^{-1},|c|)]_{|\cdot|} \in
|E|=E_{0}\times_{({\mathbb{C}}^{*},|\cdot|)} {\mathbb{C}}$ and $l^{U(1)}%
=[(l,1)]_{arg}=[(lc^{-1},\frac{c}{|c|})]_{arg} \in E^{u(1)}=E_{0}%
\times_{({\mathbb{C}}^{*},arg)} {\mathbb{C}}, c\in{\mathbb{C}}^{*}$.

For simplicity, we will identify $E$ with $|E|\otimes E^{U(1)}$ and write
$0\neq l=|l|\otimes l^{U(1)}=|l|l^{U(1)}$ and thus, also, $l^{U(1)}=\frac
{l}{|l|}$.

Let $\{g_{\alpha\beta}\}$ be the transition functions for $E$ associated to
local trivializations for some open cover $\{U_\alpha\}$ of $X$.
Then, for the same open cover $\{U_\alpha\}$, the complex line bundle $E^{U(1)}$ has $U(1)$-valued
transition functions $\{g_{\alpha\beta}/\left\vert g_{\alpha\beta}\right\vert
\}$, and the complex line bundle $\left\vert E\right\vert $ has ${\mathbb{R}}^{+}$-valued
transition functions $\{\left\vert g_{\alpha\beta}\right\vert \}$.

This decomposition of $E=|E|\otimes E^{U(1)}$ induces an associated splitting
of connections. Let $\nabla$ be a connection on $E$ with connection form
$\Theta$ associated to a local trivializing section $s$, $\nabla s=\Theta s$.
Since, at the level of Lie algebras, the isomorphism ${\mathbb{C}}^{\ast}%
\cong{\mathbb{R}}^{+}\times U(1)$ gives ${\mathbb{C}}\cong{\mathbb{R}}\oplus
i{\mathbb{R}}$, we have $\nabla=\nabla^{|E|}+\nabla^{E^{U(1)}}$ where
$\Theta^{|E|}=\operatorname{Re}\Theta$ and $\Theta^{E^{U(1)}}=i\Im\Theta$ are
the connection forms for $|E|,E^{U(1)}$ associated to the local trivializing
sections $|s|,s^{U(1)}$ respectively:
\begin{align*}
\nabla^{|E|}|s|  &  =\operatorname{Re}\Theta\,|s|\\
\nabla^{E^{U(1)}}s^{U(1)}  &  =i\Im\Theta\,s^{U(1)}.
\end{align*}

Now let $E$ have an hermitian structure $h$. Then $|E|$ has a global
trivializing section $\mu_{h}$ defined as follows. Let $s$ be any local
trivializing section of $E$ over an open set $U\subset X$. Over $U$, define
\begin{equation}
\mu_{h}=\frac{|s|}{\sqrt{h(s,s)}}. \label{lemma:|E|triv-sec}%
\end{equation}
Notice that, since $h$ is an hermitian structure, $\mu_{h}$ is independent of the choice of
the local trivializing section $s$ and
therefore extends to a global trivializing section of $|E|.$

Let $\Gamma(E)$ denote the space of smooth sections of $E$.
A connection $\nabla$ on $(E,h)\rightarrow X$ is said to be compatible with
the hermitian structure if for any section $s\in\Gamma(E)$, one has
$dh(s,s)=h(\nabla s,s)+h(s,\nabla s)$. Let $||s||^{2}=h(s,s)$. This property
is equivalent to $d||s||=\operatorname{Re}\Theta||s||$ which is, in turn,
equivalent to
\[
\nabla^{|E|}\mu_{h}=0.
\]

\begin{remark}
\label{quotas} The above isomorphism of $|E|$ with the trivial bundle also
defines, since $E=|E|\otimes E^{U(1)}$, an isomorphism of $E$ with $E^{U(1)}$
given by $l\mapsto l\frac{\sqrt{h(l,l)}}{|l|}, l\in E$.
\end{remark}

\begin{remark}
If $X$ is a complex manifold and if $E$ has an holomorphic structure, then
given a global nonzero meromorphic section of $E$, $s$, one has $\mu_{h} =
\frac{|s|}{\sqrt{h(s,s)}}$ away from the divisor of $s$. This expression
extends uniquely to $\mu_{h}$ on the whole of $X$.
\end{remark}

Any line bundle $E^{U(1)}$ has a canonical hermitian structure defined by
$\widetilde{h} (zl^{U(1)},z^{\prime}l^{U(1)})= z \bar z^{\prime}$. This hermitian structure
is independent of the choice of representative $l$
and is therefore well defined.

\begin{remark}
An hermitian line bundle $(E,h)\rightarrow X$ can then be decomposed into
smooth hermitian line bundles $(E,h)=(\left\vert E\right\vert ,\widehat
h)\otimes(E^{U(1)},\widetilde{h})$. The hermitian structure on $|E|$ is
defined by $\widehat h(z|l|,z^{\prime}|l|)= h(l,l) z \bar z^{\prime}$, so
that
\[
h(zl,z^{\prime}l)= z\bar z^{\prime}\, \widehat h(|l|,|l|) \cdot\widetilde
h(l^{{U(1)}},l^{U(1)})= z\bar z^{\prime}\, \widehat h(|l|,|l|),\,\,\,
z,z^{\prime}\in{\mathbb{C}}, l\in E.
\]
\end{remark}

\begin{remark}
Note that under the isomorphism $\left\vert E\right\vert \simeq{X\times
\mathbb{C}}$ defined by $\mu_{h}$, the hermitian form $\widehat h$ becomes the
standard hermitian product on ${\mathbb{C}}$.
\end{remark}

We recall the following standard results (see, for instance, Proposition 4.2.14 in
\cite{Huybrechts}):

\begin{lemma}
If $(E,h)\rightarrow X$ is a complex hermitian vector bundle, then there
exists a compatible connection $\nabla.$ Moreover, $\nabla^{\prime}$ is
another compatible connection if and only if there exists a (global)
real-valued $1$-form $\beta$ such that $\nabla^{\prime}=\nabla+i\beta.$
\end{lemma}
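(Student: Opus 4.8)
The plan is to prove existence by patching local compatible connections with a partition of unity, and then to identify the affine space of compatible connections by extracting the skew-symmetry condition forced by the compatibility identity. First I would cover $X$ by open sets $U_{\alpha}$ over which $E$ is trivial; on each $U_{\alpha}$ the product connection $d$ (differentiation of component functions in a local frame, chosen $h$-orthonormal if one wants to be concrete) is a connection $\nabla^{\alpha}$ compatible with $h|_{U_{\alpha}}$. Choosing a partition of unity $\{\varphi_{\alpha}\}$ subordinate to $\{U_{\alpha}\}$, I would set $\nabla=\sum_{\alpha}\varphi_{\alpha}\nabla^{\alpha}$. This is a genuine connection: the Leibniz rule survives the combination precisely because $\sum_{\alpha}\varphi_{\alpha}=1$ (the $d\varphi_{\alpha}$ terms telescope to $0$). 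Likewise $\nabla$ is compatible, because the identity $d\,h(s,t)=h(\nabla s,t)+h(s,\nabla t)$ is $\mathbb{R}$-affine in $\nabla$ and, again using $\sum_{\alpha}\varphi_{\alpha}=1$, a convex combination of solutions of an affine condition is a solution.

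For the ``moreover'' part, given two connections $\nabla,\nabla'$ on $E$, their difference $\nabla'-\nabla$ is $C^{\infty}(X)$-linear in the section, hence is given by a global $1$-form with values in $\mathrm{End}(E)$; in the line-bundle case this is a $\mathbb{C}$-valued $1$-form $a$, so $\nabla'=\nabla+a$. Subtracting the compatibility identity for $\nabla$ from that for $\nabla'$ gives $0=h(a\,s,t)+h(s,a\,t)=(a+\bar a)\,h(s,t)$ (using that $h$ is conjugate-linear in the second slot), so $a+\bar a=0$, i.e. $a=i\beta$ with $\beta=\Im a$ a (global, real-valued) $1$-form. Conversely, if $\beta$ is real-valued then $\nabla+i\beta$ is again compatible, since the two extra terms are $i\beta\,h(s,t)$ and $-i\beta\,h(s,t)$ and cancel. (In higher rank the same computation yields $A^{*_{h}}=-A$, i.e. $A$ skew-hermitian with respect to $h$, which specializes to $i\beta$ when $\mathrm{rk}\,E=1$.) Alternatively, in view of the splitting $E=|E|\otimes E^{U(1)}$ and the observation recorded just above the statement that compatibility of $\nabla$ with $h$ is equivalent to $\nabla^{|E|}\mu_{h}=0$, one may phrase the result as: the compatible connections are exactly $\nabla^{|E|}\oplus\nabla^{E^{U(1)}}$ with $\nabla^{|E|}$ the unique flat connection annihilating $\mu_{h}$ and $\nabla^{E^{U(1)}}$ an arbitrary $U(1)$-connection on $E^{U(1)}$, and the $i\beta$ parametrizes the latter choice.

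I do not expect a serious obstacle here; the statement is standard. The only points needing care are (i) checking that a locally finite convex combination of connections is still a connection and still compatible — both reduce to $\sum_{\alpha}\varphi_{\alpha}=1$ — and (ii) tracking the conjugate-linearity convention for $h$ so that the computation correctly produces $a+\bar a=0$ (rather than, say, a condition on $\Re a$). If one prefers to avoid partitions of unity for existence, one can instead invoke the splitting-of-connections discussion above: take $\nabla^{|E|}$ to be the trivial connection on $|E|\simeq X\times\mathbb{C}$ furnished by $\mu_{h}$ (automatically compatible with $\widehat h$ and with $\nabla^{|E|}\mu_h=0$) together with any connection $\nabla^{E^{U(1)}}$ compatible with $\widetilde h$ on the $U(1)$-bundle $E^{U(1)}$, whose existence is elementary; then $\nabla=\nabla^{|E|}\oplus\nabla^{E^{U(1)}}$ works.
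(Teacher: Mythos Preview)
Your argument is correct and is the standard one. The paper does not actually supply a proof of this lemma: it is stated as one of the ``standard results'' with a reference to Huybrechts, so there is no proof in the paper to compare against. Your alternative phrasing via the splitting $E=|E|\otimes E^{U(1)}$ and the section $\mu_h$ is a nice touch, since it ties the statement directly to the framework the paper sets up in Section~\ref{subsec:cplx line bdls}.
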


If $E\rightarrow X$ is a holomorphic line bundle, then $\bar{\partial}$ is a
well-defined operator on $\Gamma(E)$. A connection $\nabla$ on $E$ is said
to be compatible with the holomorphic structure if $\nabla^{0,1}=\bar
{\partial}$.

\begin{lemma}
\label{lemma:chern connection} Let $(E,h)\rightarrow X$ be a hermitian
holomorphic line bundle. There exists a unique connection $\nabla,$ called the
Chern connection, which is compatible with both the hermitian structure $h$
and the holomorphic structure of $E$. Moreover, if $\{U_{\alpha},s_{\alpha}\}$
is a holomorphic trivialization of $E$, then $\nabla s_{\alpha}=\left(
\partial\log h(s_{\alpha},s_{\alpha})\right)  \,s_{\alpha}.$
\end{lemma}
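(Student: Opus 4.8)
The plan is to reduce everything to a computation in a holomorphic trivialization. First I would fix a holomorphic trivialization $\{U_{\alpha},s_{\alpha}\}$ of $E$, write the connection form of a hypothetical compatible $\nabla$ as $\nabla s_{\alpha}=\Theta_{\alpha}s_{\alpha}$, and set $h_{\alpha}:=h(s_{\alpha},s_{\alpha})>0$. Since $s_{\alpha}$ is holomorphic, $\bar{\partial}s_{\alpha}=0$, so compatibility with the holomorphic structure ($\nabla^{0,1}=\bar{\partial}$) forces $\Theta_{\alpha}^{0,1}=0$, i.e.\ $\Theta_{\alpha}$ is of type $(1,0)$. Compatibility with $h$ gives $dh_{\alpha}=h(\nabla s_{\alpha},s_{\alpha})+h(s_{\alpha},\nabla s_{\alpha})=(\Theta_{\alpha}+\overline{\Theta_{\alpha}})\,h_{\alpha}$, hence $d\log h_{\alpha}=\Theta_{\alpha}+\overline{\Theta_{\alpha}}$; taking $(1,0)$-parts (and using that $\overline{\Theta_{\alpha}}$ has type $(0,1)$) yields $\Theta_{\alpha}=\partial\log h_{\alpha}$. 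This already proves uniqueness and establishes the stated formula.

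For existence, I would define $\nabla$ on each $U_{\alpha}$ by $\nabla s_{\alpha}:=(\partial\log h_{\alpha})\,s_{\alpha}$, extended by the Leibniz rule to arbitrary local sections. The only point requiring verification is that these local prescriptions agree on overlaps $U_{\alpha}\cap U_{\beta}$. Writing $s_{\alpha}=g_{\alpha\beta}s_{\beta}$ with $g_{\alpha\beta}$ holomorphic and nowhere zero, we have $h_{\alpha}=|g_{\alpha\beta}|^{2}h_{\beta}$, so $\partial\log h_{\alpha}=\partial\log h_{\beta}+\partial\log g_{\alpha\beta}+\partial\log\overline{g_{\alpha\beta}}$; now $\partial\log\overline{g_{\alpha\beta}}=0$ because $\overline{g_{\alpha\beta}}$ is anti-holomorphic, and $\partial\log g_{\alpha\beta}=d\log g_{\alpha\beta}=g_{\alpha\beta}^{-1}dg_{\alpha\beta}$. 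This is precisely the transformation law a connection form must obey under the change of frame $s_{\alpha}=g_{\alpha\beta}s_{\beta}$, so the local connections glue to a global connection $\nabla$ on $E$.

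Finally, since both compatibility conditions are local and were built into the definition on each chart, the glued $\nabla$ is automatically compatible with $h$ and with $\bar{\partial}$: for a general section $t=fs_{\alpha}$ one has $\nabla^{0,1}t=(\bar{\partial}f)\,s_{\alpha}=\bar{\partial}t$, and $dh(t,t)=h(\nabla t,t)+h(t,\nabla t)$ follows from the Leibniz rule together with the chart computation above. I expect the only step that needs genuine care to be the gluing check in the second paragraph, and it comes down entirely to the holomorphy of the transition functions $g_{\alpha\beta}$ (which kills the $\partial\log\overline{g_{\alpha\beta}}$ term).
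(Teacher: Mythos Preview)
Your argument is correct and is exactly the standard textbook proof. The paper itself does not supply a proof of this lemma: it is stated as one of several ``standard results'' and simply referenced to \cite{Huybrechts}. What you have written is essentially the argument one finds there, so there is nothing to compare.
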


Then, in the holomorphic local trivialization $\{U_{\alpha},s_{\alpha}\}$, we
have
\[
F_{\nabla}=d(\partial\log h(s_{\alpha},s_{\alpha}))=-\partial\bar{\partial
}\log h(s_{\alpha},s_{\alpha})
\]
that is, $-\log h(s_{\alpha},s_{\alpha})$ is a local potential for the
curvature $2$-form and, on the open set $U_{\alpha},$
\[
[i\partial\bar{\partial}(-\log h(s_{\alpha},s_{\alpha}))] \in2\pi\cdot
c_{1}(E).
\]

The induced connections on $|E|$ and $E^{U(1)}$ are then given by
\begin{align*}
\nabla^{|E|} |s|  &  = \frac12 d\log h(s,s)\, |s|,\\
\nabla^{E^{U(1)}} s^{U(1)}  &  = \frac12 \left(  \partial\log h(s,s) -
\bar\partial\log h(s,s)\right)  \, s^{U(1)}.
\end{align*}

If $X$ is K\"ahler with integral symplectic form $\omega$ , and
$L$ is an hermitian holomorphic line bundle with the curvature of the Chern
connection given by $-i \omega$, then in a local holomorphic trivialization one
has that $\kappa=-\log h(s,s)$ is a local K\"{a}hler potential.

\subsection{Toric Manifolds}

Let $(X,\omega)$ be a compact smooth symplectic toric manifold with symplectic
form $\omega$, moment map\\ $\mu:X\rightarrow Lie(\mathbb{T}^{n})^{\ast}%
\simeq\mathbb{R}^{n}$ and moment polytope $P_{X}=\mu(X)$ with associated fan
$\Sigma$.
The K\"{a}hler structure of $X$, which connects the
symplectomorphism class of $X$ determined by $P_{X}$ to the biholomorphism
class of $X$ determined by $\Sigma$, is fixed by choosing a so-called
symplectic potential. We will find both descriptions, as well as the relation
between them, essential for our work, and so we describe them briefly here (though we
refer the interested reader to \cite{Guillemin94}, \cite{Abreu03},
\cite{Cox} and \cite{DP09} for details).

\subsubsection{The symplectic structure of $X$}

\label{maldives}

Let $\check{P}_{X}$ denote the interior of the moment polytope $P_{X}$. On
$\check{X}=\mu^{-1}(\check P_X)\cong\check{P}_{X}\times{\mathbb{T}}^{n}$ consider action-angle
coordinates $(x,\theta)$, so that $\mu({x},\theta) = x = \,^{t}(x^{1}%
,\dots,x^{n})$. The symplectic form $\omega$ in this coordinate chart is
simply
\begin{equation}
\left.  \omega\right\vert _{\mu^{-1}(\check{P}_{X})}=\sum_{j=1}^{n}
dx^{j}\wedge d\theta^{j}. \label{eqn:interior symp}%
\end{equation}

The moment polytope $P_{X}$ is a Delzant polytope (see \cite{D88} or page 698 of \cite{DP09})
determined by a set of inequalities $\{\ell
_{j}({x})\geq0\}_{j=1,\dots,r}$, where $r$ is the number of
facets of $P_{X}$ and for each $j=1,\dots,r$,
\[
\ell_{j}(x)=\nu_{j}\cdot{x}+\lambda_{j}
\]
where $\nu_{j}$ is the (inward pointing) primitive integral vector normal to the $j$-th facet of
$P_{X}$, and $\lambda_{j}\in\mathbb{R}.$

We now describe the coordinate chart associated to a vertex $v\in P_{X}$.
Since we assume $X$ is smooth, the polytope is regular; that is, there are $n$
facets adjacent to each vertex, with normal vectors forming a $\mathbb{Z}%
$-basis of $\mathbb{Z}^{n}$. Reorder (if necessary) the inequalities so that
the first $n$ correspond to the facets adjacent to $v$. Then $\ell_{1}%
(v)=\ell_{2}(v)=\dots=\ell_{n}(v)=0.$ Let $A_{v}\in GL_{n}({\mathbb{Z}})$ be
the matrix whose rows are the vectors $\nu_{j},$ and let ${\lambda}%
_{v}=\,^{t}(\lambda_{1},\dots,\lambda_{n})$. Define new (vertex action-angle) coordinates ${x}_{v}$
on $\mathbb{R}^{n}$ and ${\theta}_{v}$ on $\mathbb{T}^{n}$ by
\begin{equation}
{x}_{v}:=A_{v}{x}+{\lambda}_{v}\text{, and } \theta_{v}:=\,^{t}A_{v}%
^{-1}{\theta}. \label{eqn:v-coord change}%
\end{equation}
The image of the polytope $P_X$ under $x_v$ in (\ref{eqn:v-coord change})
is also a Delzant polytope  $P^v_X$,
\be
\label{pxve}
P^v_X = A_v \ P_X + \lambda_v ,
\ee
 with the vertex $v$ mapped to the
origin and the codimension one faces meeting at the origin contained in the
coordinate hyperplanes.
Given a different vertex $v'$, with associated matrix $A_{v'} \in GL_n({\mathbb{Z}})$,
and vector $\lambda_{v'}$, the transition functions
between the corresponding vertex action angle coordinates read
\begin{eqnarray}
\nonumber
x_{v'} &=& A_{v'}A_v^{-1}(x_v-\lambda_v) + \lambda_{v'} \\
\theta_{v'} &=& {}^tA_{v'}^{-1} \ {}^tA_v \ \theta_v  . \label{trf23}
\end{eqnarray}
For a face $F\subset P_{X}$ (that is, a linear boundary component of any
codimension, including the polytope itself, the facets and the edges), denote
by $\check{F}$ the interior of $F$, with the convention that $\check{\{v\}}=\{v\}$
 for vertices.
The vertex chart neighborhood at $v$ is defined to be
the following $\mathbb{T}^n$-invariant open set
\[
U_{v}:=\mu^{-1}\left(\bigcup\limits_{\text{faces }F\text{ of }P_{X}
\text{ adjacent to }v}\check{F}\right)  .
\]
We consider on $U_{v}$ coordinates $\{a_v^j,b_v^j\}_{j=1,\dots,n}$ related to
the vertex action-angle coordinates $\{{x}_{v}^j,\theta_{v}^j\}_{j=1,\dots,n}$ by
$a_v^j+ib_v^j= \sqrt{x_v^j}e^{i\theta_v^j}, j=1,\dots,n$, on $\check X$
(see, for example, Sections 3 and 4 of
\cite{DP09}).
Since $x_v$ takes values in the polytope $P_X^v \subset \R^n$ (\ref{pxve}),
(it is surjective to the polytope minus the faces not containing the origin) and $\theta_v \in \R^n/\Z^n$,
the image of $U_v$ under $a_v+ib_v$ is a bounded neighborhood of the origin
in $\C^n$.
The (non-holomorphic) transition functions between coordinate functions $a_v+ib_v$ and
$a_{v'}+ib_{v'}$ for vertices $v$ and $v'$ can be obtained from
(\ref{trf23}) (see Section 4 of \cite{DP09}),
$$
a^{j'}_{v'}+ib^{j'}_{v'} = \sqrt{\sum_{j=1}^n (A_{v'} \ A_v^{-1})_{j'j} \ \left(|a_v^j+ib_v^j|^2-\lambda_{v}^{j}\right) + \lambda_{v'}^{j'}}  \,\,\,\, \prod_{j=1}^n\left(\frac{a_v^j+ib_v^j}{|a_v^j+ib_v^j|}\right)^{({}^tA_{v'}^{-1} \ {}^t A_v)_{j'j}}.
$$
We will also need the much simpler transition functions for holomorphic vertex
coordinates (which will be introduced below in the section on K\"ahler structures).

We note
that the faces of $P_{X}$ correspond to points in $X$ with nontrivial
stabilizer as follows: suppose $F$ is a face adjacent to $v$ given by
$\{x_{v}^{j_{s}}=0\}_{s=1,\dots,j_{F}}$ (so $F$ is a codimension-$j_{F}$
face). Then the points in $\mu^{-1}(F)$ are fixed by the subtorus
parameterized by the coordinates $\{(\theta_{v}^{j_{1}},\dots,\theta
_{v}^{j_{F}})\}$. Let $V$ be the set of vertices of $P_X$.
We call $\{(\mu^{-1}(\check{P}_{X}),(x,\theta)),(U_{v},(a_v,b_v)):v \in V\}$ the \emph{vertex atlas} of $X$.

The symplectic form in the vertex coordinate chart $U_{v}$ can be computed by
pullback of (\ref{eqn:interior symp}) under the coordinate change
(\ref{eqn:v-coord change}) to be
\[
\left.  \omega\right\vert _{U_{v}}= \sum_{j=1}^n 2da_v^j\wedge db_v^j,
\]
and on $U_v\cap \check X=\check X$,
\[
\left. \omega\right\vert_{\check X}=\sum_{j=1}^{n} dx_{v}^{j} \wedge
d\theta_{v}^{j}.
\]

\subsubsection{The K\"{a}hler structure of $X\label{subsec:Kaehler}$}

In order to describe the toric K\"{a}hler structures on $X$, let us consider torus-invariant complex
structures on the symplectic toric manifold $(X,\omega)$ with moment
polytope $P_{X}$.
Let $g_{P_{X}}\in C^{\infty}(\check{P}_{X})$ be
\begin{equation}
g_{P_{X}}({x})=\frac{1}{2}\sum_{j=1}^{r}\ell_{j}({x})\log\ell_{j}%
({x}).\label{eqn:gP}
\end{equation}

\begin{definition}
Let $C_{P_{X}}^{\infty}(P_{X})$ be the set of smooth functions on $P_X$ such that
$\varphi\in C_{P_{X}}^{\infty}(P_{X})$ if
$\operatorname{Hess}_{x}(g_{P_{X}}+\varphi)$ is positive definite on $\check{P}_{X}$ and
there exists a strictly positive function $\alpha\in C^\infty(P_{X})$ so that
\begin{equation}
\det(\operatorname{Hess}_{x}(g_{P_{X}}+\varphi))=\left(  \alpha({x})
\prod_{j=1}^{r}\ell_{j}({x})\right)^{-1}\label{eqn:greg}
\end{equation}
on $\check P_X$.
\end{definition}
A torus-invariant complex structure on $(X,\omega)$ is determined by a
symplectic potential
\[g=g_{P_{X}}+\varphi,\]
with $\varphi\in C_{P_{X}}^{\infty}(P_{X})$, see Section 4 of
\cite{Guillemin94} and Theorem 2.8 of \cite{Abreu03}.
In the symplectic frame determined by the action-angle coordinates
$({x,\theta})$ on $\check{X}$, the toric complex structure $I$ and the metric
$\gamma=\omega(\cdot,I\cdot)$ tensors associated to the symplectic potential
$g$ are then
\begin{equation}\label{ga}
I=%
\begin{pmatrix}
0 & -G^{-1}\\
G & 0
\end{pmatrix}
\text{ and }\gamma=%
\begin{pmatrix}
G & 0\\
0 & G^{-1}%
\end{pmatrix}
,
\end{equation}
where $G=\operatorname{Hess}_{{x}}g$ is the Hessian of $g$.

Let us now relate these complex structures to the algebro-geometric description of toric manifolds.
By a standard construction, see, for example, Section 5 in \cite{D88} or Definition 6.4.2 in \cite{CdS},
associated to the moment polytope $P_X$ there is an associated complete fan $\Sigma$. This fan defines a compact smooth
toric variety $Y$ diffeomorphic to $X$ (see below) and with canonical complex
structure defined by $\Sigma$.

The complex torus $(\mathbb{C}^{*})^{n}$ acts on $Y$ with a dense open orbit
(biholomorphic to $(\mathbb{C}^{*})^{n}$) which we henceforth refer to as
\emph{the open orbit}. Let $M$ denote the (integer lattice of) characters of
$(\mathbb{C}^{*})^{n},$ so that after a choice of basis, $M\simeq
\mathbb{Z}^{n}.$\footnote{We will henceforth identify $M\cong \Z^n$ and $M\otimes \R \cong \R^n$.}
The characters of $(\mathbb{C}^{*})^{n}$ extend to meromorphic functions on $Y$ with
torus-invariant divisors.

The toric variety $Y$ has an atlas of holomorphic coordinates $\{(V_v,\tilde w_v)\}_{v\in V}$,
$\tilde w_v=(\tilde w_v^1,\dots,\tilde w_v^n)$,
where for each pair of vertices $v,v'$, over $V_v\cap V_{v'}$ the glueing conditions are given by
\begin{equation}
\tilde w_{v^{\prime}}=\tilde w_{v}^{A_{v}A_{v^{\prime}}^{-1}}, \label{eqn:wvvprime}
\end{equation}
with $A_{v}A_{v^{\prime}}^{-1}$  interpreted as a row of multiindices; i.e.
$\tilde w_{v^{\prime}}^{j}=\prod_{l=1}^{n}(\tilde w_{v}^{l})^{(A_{v}A_{v^{\prime}}^{-1}%
)_{j}^{l}}$. (See, for example, Section 5 of \cite{DP09}.)

Denote the open orbit in $Y$ by $V_{0}.$ The symplectic potential $g$ fixes a
biholomorphism $(\check{X},I) \cong\check{P_{X}}\times\mathbb{T}^{n} \cong
V_{0}\cong(\mathbb{C}^{*})^{n}$ given by
\ba
\label{holst}
\nonumber \check{P}_{X}\times\mathbb{T}^{n} &\longrightarrow& V_{0}\simeq
(\mathbb{C}^{*})^{n} \\
(x, \theta) &\mapsto &\tilde w =e^{y+i{\theta}%
}=(e^{y^{1}+i\theta^{1}},\dots,e^{y^{n} +i\theta^{n}}) ,
\ea
where $y^{j}=\partial g/\partial x^{j}$. Note that this map is not a symplectomorphism
with respect to the standard symplectic structure on $(\mathbb{C}^{*})^{n}$.

The map ${x}\mapsto{y}=\partial
g/\partial{x}$ \ is a bijective Legendre transform. The inverse map is given
by ${x}=\partial h/\partial{y}$, where $h$ is a K\"{a}hler potential given in
terms of $g$ by
\[
h:={x}\cdot{y}-g\text{.}%
\]
This biholomorphism extends uniquely to a biholomorphism $$\psi_g:X \to Y$$ as follows.
The complex structure associated to $g$ defines holomorphic coordinates in the
vertex coordinate charts via the coordinate change (\ref{eqn:v-coord change})
to yield%
\ba
\nonumber
U_v  &\longrightarrow& V_{v} \\
\label{bihol}
({x}_{v},{\theta}_{v}) & \mapsto &  \tilde w_{v}=e^{{y}_{v}+i{\theta}_{v}} \ ,
\ea
where $y^{j}_{v}:=\partial g/\partial x_{v}^{j}=\sum_{k=1}^{n} \,\left(
A_{v}^{-1}\right)  _{j}^{k}\partial g/\partial x^{k}$.
Using the observation that $y_{v}+i\theta_{v}=\,^{t}A_{v} ^{-1}(y+i{\theta)}$,
one may verify that (\ref{eqn:wvvprime}) is indeed satisfied. Similarly, on $V_0$ we have
\begin{equation}
\tilde w= \tilde w_{v}^{A_{v}}, \label{eqn:ww_v}
\end{equation}
which will be useful below.

We define the $I$-dependent {\it holomorphic vertex atlas} $\{(U_v,w_v)\}_{v\in V}$ on $X$ to be the pullback by $\psi_g$
of the holomorphic atlas $\{(V_v,\tilde w_v)\}_{v\in V}$ on $Y$. (We will also denote the pullback of the chart
$(V_0,\tilde w)$ on $Y$ to $X$ by $(U_0,w)$.) The $I$-dependent transition functions for the holomorphic coordinate charts
$(U_v,w_v), v\in V, (U_0,w)$ on $X$ are therefore the pullbacks by $\psi_g$ of the corresponding transition
functions on $Y$ in (\ref{eqn:wvvprime}) and (\ref{eqn:ww_v}).

Henceforth, we will assume that $X$ is equipped with a K\"ahler structure
determined by $\omega$ and by a symplectic potential $g=g_{P_X}+\varphi,\ \varphi\in C^\infty_{P_X}(P_X).$

\subsubsection{Line bundles and sections\label{subsec:line-bundles}}

Since compact smooth toric varieties
are simply connected, the Picard group of equivalence classes of holomorphic
line bundles is isomorphic to $H^2(X, \Z)$, with isomorphism established
by the first Chern class. In other words, fixing the first Chern class of a line bundle on the
the complex toric manifold $X$ fixes the bundle up to isomorphism. (See the Corollary on p.64,
on Section 3.4 of \cite{F}.)

The linear equivalence classes of the torus-invariant divisors of $X$ generate the Picard group of $X$, and
there is a one-to-one correspondence between irreducible torus-invariant
divisors and $1$-cones in $\Sigma$  (see, Part I, Chapter 4 of \cite{Cox}) .
Denote the set of $1$-cones in $\Sigma$ by
$\Sigma^{(1)}. $ The $j$-th $1$-cone in $\Sigma^{(1)}$ is generated by the primitive
integral vector $\nu_{j}$ normal to the $j$-th facet of $P_X$. Then, the associated
irreducible divisor $D_j=\mu^{-1}(\{x\in P_X: \ell_j(x)=\nu_j\cdot x +\lambda_j=0\})$ is the inverse image
under the moment map $\mu$ of that facet of $P_{X}$.
The Picard group is then generated by the linear equivalence classes of
irreducible divisors $D_{1},\dots,D_{r}$.
Consider a divisor $D^L=\lambda_{1}^{L} D_{1}+\dots+\lambda_{r}^{L}
D_{r}$, for $\lambda_{1}^{L},\dots,\lambda_{r}^{L} \in{\mathbb{Z}}$, defining
a holomorphic line bundle $L=\mathcal{O}(D^L)$ and a
(unique up to constant) meromorphic section of $L$ with divisor $D^L$, $\sigma_{D^L}$ .

From \cite{Cox}, the divisor of the (rational) function defined on the
open orbit by $w^{m}, m\in \Z^n$, can be computed to be
\begin{equation}
\operatorname{div}(w^{m})=\sum_{j=1}^{r} \left\langle \nu_{j},m\right\rangle
D_{j}. \label{eqn:Coxformula}
\end{equation}

Then, we have
\begin{eqnarray}\label{hzero}\nonumber
H^0(X,L)&=& {\rm span}_\C \left\{w^m \sigma_{D^L}: m\in\Z^n,\,\operatorname{div}(w^m_0 \sigma_{D^L})
\geq 0 \right\}
= \\
&=& {\rm span}_\C
\left\{w^m \sigma_{D^L}: m\in\Z^n, \langle m,\nu_i\rangle+\lambda_i^L\geq 0, i=1,\dots,r \right\}.
\end{eqnarray}
Therefore, there is a natural bijection between a basis of $H^0(X,L)$ whose elements are
weight vectors for the action of the torus and
the integral
points of the Delzant polytope with integral vertices \footnote{Note that we have different sign
conventions for $\lambda_{F}$ than those used in
\cite{Baier-Florentino-Mourao-Nunes}. We have chosen rather to follow the
convention in \cite{Cox}, as they seem to make certain equations more natural
(for example, shifting $\lambda_{F}\mapsto\lambda_{F}+1$ has the effect of
shifting the facet $F$ one unit along the \emph{outward} pointing normal to
$F$).}
\begin{equation}
P_{L}:=\{x\in \R^n:\left\langle x,\nu_{j}\right\rangle +\lambda
_{j}^{L} \geq0,~j=1,\dots,r\}\subset \R^n. \label{eqn:p_Ldef}%
\end{equation}

For simplicity, let us assume that $L$ is ample so that there is a canonical bijection between
the vertices of $P_L$ and the vertices of $P_X$, defined by the equality of the set of
normals of the facets meeting at those vertices.
(In fact, if $L$ is ample there is a bijection between the faces of $P_X$ and of $P_L$. See Section
3.2.1 of \cite{CK}.)
Let us denote by the same symbol $v$ a vertex
of $P_L$ and the corresponding vertex of $P_X$.
The holomorphic section corresponding to the vertex $v$ of $P_L$ will provide
a local trivializing section on the open set $U_v$, so that one obtains a global
system of local holomorphic trivializations for $L$.
For such vertex $v$, we can order the inequalities $\{\ell_{j}%
^{L}(m):=\left\langle m,\nu_{j}\right\rangle +\lambda_{j}^{L} \geq 0, j=1\dots r\} $ so
that $\nu_{1},\dots,\nu_{n}$ are the normals to the facets of $P_{L}$ meeting
at $v$; this is the same ordering that we used in the definition of the vertex
coordinates on $X$. Using this ordering, we set $\lambda_{v}^{L}%
=\,^{t}(\lambda_{v,1}^{L},\dots,\lambda_{v,n}^{L}):=(\lambda_{1}^{L}%
,\dots,\lambda_{n}^{L}).$

The holomorphic section corresponding to a vertex $v$ of $P_L$
is given by
\begin{equation}\label{umve}
\mathbf{1}_{v}:=w_v^{-\lambda_v^L} \sigma_{D^L}.
\end{equation}
Using (\ref{eqn:ww_v}) and (\ref{eqn:Coxformula}), one obtains that the divisor
of the meromorphic function $w_{v}^{\lambda_{v}^{L}}$ on $U_{v}$ is
\begin{equation*}
\operatorname{div}_{U_{v}}(w_{v}^{\lambda_{v}^{L}})=\left(\lambda_{v,1}^{L}
D_{1}+\cdots+\lambda_{v,n}^{L} D_{n}\right) \cap U_v, \label{eqn:divwL}
\end{equation*}
and therefore ${\rm div}_{U_v}\, (\mathbf{1}_{v})=0$, so that $\mathbf{1}_{v}$ is a trivializing holomorphic
section of $L$ on $U_v = X\setminus \{\cup_{j=n+1}^r D_j\}$. We remark that these sections
are determined up to a constant by their divisors and they are therefore defined for every line
bundle in the isomorphism class of $L$.

For notational convenience, let $\mathbf{1}_0=\sigma_{D^L}$.
Using (\ref{umve}), we
may compute the transition functions for $L$ relative to the holomorphic vertex atlas obtaining, $g_{v^{\prime}v}
^{L}:=\mathbf{1}_v/\mathbf{1}_{v'}=w_{v'}^{\lambda_{v'}^{L}}/w_{v}^{\lambda_{v}^{L}}$ and
$g_{0v}^{L}:=\mathbf{1}_v/\mathbf{1}_0=w_{v}^{-\lambda_{v}^{L}}.$ Combined with (\ref{eqn:wvvprime}), the
transition functions for $\mathcal{O}(\lambda_{1}^L D_{1}+\cdots+\lambda_{r}^L
D_{r})$ become
\begin{align}
g_{v0}^{L}(w)  &  =w^{A_{v}^{-1}\lambda_{v}^{L}} \,\,\, \text{ and}
\label{eqn:Ltransfns}\\
g_{v^{\prime}v}^{L}(w_{v^{\prime}})  &  =w_{v^{\prime}}^{\lambda_{v^{\prime}%
}^{L}-A_{v^{\prime}}A_{v}^{-1}\lambda_{v}^{L}}.\nonumber
\end{align}
\begin{remark}
Note that the transition functions of $L$ depend  on the variation of  complex structure on $X$ through
the symplectic potential $g$, since
$w = e^{\frac{\partial g}{\partial x}+i\theta}$ and
$w_{v} = e^{\frac{\partial g}{\partial x_{v}}+i\theta_{v}}$ (see (\ref{holst}) and  (\ref{bihol})). We will
consider one-parameter families of symplectic potentials,
$g_s = g_{P_X} + \varphi + s \psi, \, \,  \varphi, \psi \in C^\infty_{P_X}(P_X), \, s  \in \R^+$.
The transition functions and therefore $L$ depend smoothly on $s$.
\end{remark}

These relations define a holomorphic line bundle on $X$ for any
integral values of $\lambda_i^L$, even if this line bundle is not ample. In this case,
sections of the sheaf of holomorphic sections over $U_v$ are defined as in (\ref{hzero})
with $X$ replaced by $U_v$ and the divisors $D$ replaced by $D\cap U_v$.

Using the transition
functions (\ref{eqn:Ltransfns}) for $L$, we can give it a concrete realization as
the following equivariant line bundle
\begin{equation}
\label{glue}
L=\left(\bigsqcup\nolimits_{v \in V}U_{v}\times\mathbb{C}\right)  /\sim ,
\end{equation}
where $(w,z)\sim(w^{\prime},z^{\prime})$ if $w=w^{\prime}\in U_{v}\cap
U_{v^{\prime}}$ and $z=g_{vv^{\prime}}^{L}(w)z^{\prime}$.
We will assume that $L={\mathcal O}(D)$, $D=\sum_{j=1}^r\lambda^L_j D_j$, is the line bundle
defined by (\ref{glue}).
In each open set in
the holomorphic vertex atlas, the trivializing sections $\mathbf{1}_{v}$ (or
$\mathbf{1}_{0}$ on the open orbit) defined above are given by
\[
\mathbf{1}_{v}(w):=[(w,1)] \ , \ w \in U_{v}\text{ (or } w \in U_{0} \text{, for the open orbit)}.
\]

For $\sigma\in\Gamma(L)$ denote by $\sigma_v,\sigma_0$ its components on the local frames
given, respectively, by $\mathbf{1}_v,\mathbf{1}_0$.
For an integral point $m\in P_{L}\cap \Z^n$, we denote
by $\sigma^{m}$ the holomorphic section with $\sigma^m_0=w^m$. Using the transition
functions, we obtain expressions for $\sigma^{m}$ in the holomorphic vertex charts:%
\begin{equation}
\sigma_{v}^{m}(w_{v})=w_{v}^{\ell_{v}(m)}\mathbf{1}_{v}, \label{eqn:sigmam_v}%
\end{equation}
where $\ell_{v}({x}):=A_{v}{x}+\lambda_{v}^{L}.$

\bigskip
We have
\begin{lemma}
\label{lemma:I-indep}
Let $L$ be the equivariant holomorphic line bundle  defined by (\ref{eqn:Ltransfns})
and (\ref{glue}).
The unitarization $L^{U(1)}$ associated with $L$ defined over a compact toric variety $X$ has complex
structure independent
transition functions on the vertex atlas.
\end{lemma}

\begin{proof}
Recall from Section \ref{subsec:cplx line bdls} that the unitarization of $L$  is
the line bundle $L^{U(1)}$ with local trivializing sections
$\mathbf{1}_{v}^{U(1)}$ on the vertex charts and $\mathbf{1}_{0}^{U(1)}$ on the open
orbit. The corresponding transition functions are
\begin{equation}
\tilde{g}_{v^{\prime}v}^{L}:=\mathbf{1}_v^{U(1)}/\mathbf{1}_{v'}^{U(1)}=e^{i(\lambda_{v^{\prime}}%
^{L}-A_{v^{\prime}}A_{v}^{-1}\lambda_{v}^{L})\cdot\mathbf{\theta}_{v^{\prime}%
}}\,\,\text{ and }\,\,\tilde{g}_{0v}^{L}:=\mathbf{1}_v^{U(1)}/\mathbf{1}_{0}^{U(1)}=e^{-i(A_{v}^{-1}\lambda_{v}^{L}%
)\cdot\mathbf{\theta}}. \label{eqn:LU(1)transfns}
\end{equation}
We see that, unlike those for $L$ itself,  these transition functions are complex
structure independent.
\end{proof}

\bigskip

Recall also from Remark \ref{quotas} that a hermitian structure on $L$ defines
an isomorphism between $L$ and $L^{U(1)}$.

\subsubsection{The canonical bundle}

\label{canbun}

The complex structure $I$ on $(X,\omega)$ (see (\ref{ga})) defines the canonical holomorphic
line bundle $K_{I}:=\bigwedge^{n}(T^{\ast})^{1,0}$, whose sections are
$(n,0)$-forms. Consider, in the open orbit $U_{0},$ the $I$-holomorphic $(n,0)$-form
$dZ=dz^1\wedge \cdots \wedge dz^n = dW/w^{\mathbf{1}}$, where
\begin{equation}\label{partialz}
z=\,^{t}(z^{1},\dots,z^{n})=\partial g/\partial{x}+i{\theta}
\end{equation}
and $dW:=dw^{1}\wedge\cdots\wedge dw^{n}$, so that $dZ$ and $dW$ are trivializing sections of
${K_{I}}_{|_{U_{0}}}$. (Here, $\mathbf{1}=(1,\dots,1)$ so that
$w^{\mathbf{1}}=w^1\cdots w^n$.) Then

\begin{lemma}\label{lemmacox}
\cite[Sec. 8.2]{Cox} The $(n,0)$-form given on the open orbit by $dZ$ extends to a  meromorphic section of
$K_{I}$\ with divisor ${\rm div}\,(dZ)=-D_{1}-\cdots-D_{r}$. On the holomorphic vertex chart $U_{v}$
this section is proportional to $dW_{v}/w_{v}^{\mathbf{1}}=dZ_{v}$.
\end{lemma}

Since the $w_v^j$'s are holomorphic coordinates on the chart $U_v$, it follows that a system of local
holomorphic trivializations for $K_I$ is given by $\{(U_v,dW_v)\}$.
Relative to this system of trivializations of ${K_{I}}$, the transition functions
are computed to be
\begin{equation}
  g_{v^{\prime}v}^{{K_{I}}}=w_{v^{\prime}}^{-\mathbf{1}+A_{v^{\prime}}A_{v}^{-1}\mathbf{1}},
\label{eqn:rootK_transfnsb}
\end{equation}
so that, as expected from the form of ${\rm div}\,(dZ)$, $K_I$ is isomorphic to a line bundle
of the form of (\ref{glue}).

From Section \ref{subsec:cplx line bdls}, we have that the equivariant
hermitian holomorphic line bundle $K_{I}$ admits a
decomposition $K_{I}^{U(1)}\otimes\left\vert K_{I}\right\vert$.
The unitarization $K^{U(1)}$ is trivialized by
\be
\label{ku1}
\{(U_{v},\frac{dW_v}{|dW_v|})\}_{v\in V}
\ee
 with corresponding
transition functions
\begin{equation}\label{tildeK}
\tilde{g}_{v^{\prime}v}^{{K_{I}}}=e^{i(-\mathbf{1}+A_{v}
A_{v^{\prime}}^{-1}\mathbf{1})\cdot\mathbf{\theta}_{v^{\prime}}}.
\end{equation}
We see that, in accordance with Lemma
\ref{lemma:I-indep}, $K_{I}^{U(1)}$ has $I$-independent transition functions on the
vertex atlas. On the other hand
notice that the  line bundles $ K_{I}^{U(1)}$   depend on $I$
 because  their fibers change with $I$ (see (\ref{ku1}). In order to facilitate the study of
the dependence
of polarized sections on the complex structure $I$, it will be
convenient to consider a  line bundle with the same transition functions as
 $ K_{I}^{U(1)}$  but defined as in (\ref{glue}), so that this line bundle is $I$-independent
(but has an $I$-dependent isomorphism to $ K_{I}^{U(1)}$).
\begin{definition}
\label{tildeku1}
Denote by $\widetilde K^{U(1)}$ the ($I$-independent) equivariant line bundle defined as in (\ref{glue})
with $U(1)$-valued transition functions given by (\ref{tildeK}).
\end{definition}

In the remainder of this section, we continue to consider a fixed toric complex structure $I$, obtained from
a symplectic potential $g$, and will drop the
subscript $I$ for simplicity.
$K$ has a canonical hermitian structure given by comparison
with the Liouville volume form, that is, for an $(n,0)$-form $\eta$,
\[
\left\Vert \eta\right\Vert _{K}^{2}:=\frac{\eta\wedge\bar{\eta}}%
{(2i)^{n}(-1)^{n(n+1)/2}\omega^{n}/n!}.
\]
Let $\nabla^K$ denote the Chern connection corresponding to this hermitian structure. In the above
trivialization, we can compute the connection $1$-form of the Chern
connection (using Lemma \ref{lemma:chern connection}) to be
\[
\partial\log\left\Vert dZ\right\Vert _{K}^{2}.
\]

\begin{lemma}
\label{lemma:normdz}$\left\Vert dZ\right\Vert _{K}^{2}=\det G,$ where
$G=\operatorname{Hess}_{x}g$. Hence, the Chern connection $1$-form in the open
orbit $U_{0}$ is $\Theta_0=\partial\log\det G.$
\end{lemma}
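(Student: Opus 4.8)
The plan is to compute $\|dZ\|_K^2$ directly in the action-angle coordinates on the open orbit $U_0$, using the definition of the canonical hermitian structure together with the explicit form of the holomorphic coordinates $z^j = \partial g/\partial x^j + i\theta^j$. The key point is that $dZ = dz^1\wedge\cdots\wedge dz^n$ is expressed in terms of the real coordinates $(x,\theta)$ via the relations $y = \partial g/\partial x$, so that $dz^j = \sum_k G_{jk}\,dx^k + i\,d\theta^j$, where $G = \operatorname{Hess}_x g$. This is the computational heart of the argument.

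First I would write out $dZ \wedge \overline{dZ}$. Since $dz^j = \sum_k G_{jk}dx^k + i\,d\theta^j$ and $d\bar z^j = \sum_k G_{jk}dx^k - i\,d\theta^j$ (using that $G$ is real and symmetric), and since the $dx^k$ among themselves and the $d\theta^j$ among themselves each wedge to zero in the top-degree piece that survives, the only surviving terms in $dZ\wedge\overline{dZ}$ pair each $G_{jk}dx^k$ factor from $dZ$ with an $i\,d\theta^\ell$ or $-i\,d\theta^\ell$ factor. After collecting, one finds $dZ\wedge\overline{dZ} = (\det G)\cdot (\text{constant})\cdot dx^1\wedge d\theta^1\wedge\cdots\wedge dx^n\wedge d\theta^n$, where the constant absorbs the powers of $i$ and the sign from reordering the $2n$ one-forms. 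Meanwhile, from $\omega = \sum_j dx^j\wedge d\theta^j$ on $U_0$ (equation~(\ref{eqn:interior symp})), the Liouville volume form $\omega^n/n!$ is exactly $dx^1\wedge d\theta^1\wedge\cdots\wedge dx^n\wedge d\theta^n$. The constant $(2i)^n(-1)^{n(n+1)/2}$ in the denominator of the definition of $\|\cdot\|_K^2$ is precisely the normalization needed to cancel the corresponding constant from $dZ\wedge\overline{dZ}$, so that $\|dZ\|_K^2 = \det G$.

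The second assertion is then immediate: by the remark preceding the lemma, the Chern connection $1$-form on $U_0$ is $\Theta = \partial\log\|dZ\|_K^2 = \partial\log\det G$, where $\partial$ is the Dolbeault operator in the holomorphic coordinates $z^j$.

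The main obstacle — really the only place care is needed — is bookkeeping the sign and the power of $i$: one must verify that the factor $(2i)^n(-1)^{n(n+1)/2}$ in the definition of $\|\eta\|_K^2$ matches exactly the combinatorial factor produced by expanding $dZ\wedge\overline{dZ}$ and reordering the $2n$ differentials $dx^j, d\theta^j$ into the standard Liouville ordering. This is the standard computation that shows the canonical hermitian structure on $K_I$ is well-defined and positive, and it is the reason that particular constant appears; here it conveniently produces the clean answer $\det G$ with no leftover scalar.
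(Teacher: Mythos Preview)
Your approach is correct and is essentially the same as the paper's. The only difference is in how the combinatorics are handled: rather than expanding $dZ\wedge\overline{dZ}$ term by term and tracking signs, the paper packages the relations $dz=G\,dx+i\,d\theta$, $d\bar z=G\,dx-i\,d\theta$ into a single $2n\times 2n$ block matrix
\[
\begin{pmatrix} dz\\ d\bar z\end{pmatrix}=\begin{pmatrix} G & i\mathbf{1}\\ G & -i\mathbf{1}\end{pmatrix}\begin{pmatrix} dx\\ d\theta\end{pmatrix},
\]
so that the ratio of $dZ\wedge d\bar Z$ to $dx^1\wedge\cdots\wedge dx^n\wedge d\theta^1\wedge\cdots\wedge d\theta^n$ is just the determinant of this block matrix, which by an obvious row reduction is $(-2i)^n\det G$. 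This sidesteps exactly the sign and power-of-$i$ bookkeeping you flag as the main obstacle.
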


\begin{proof}
Since $z^{j}=\partial g/\partial x^{j}+i\theta^{j},$ we see that
$dz=Gdx+id{\theta}$ and similarly that $d\bar{z}=Gd{x}-id\theta.$ We can
express these in the matrix equation
\[%
\begin{pmatrix}
dz\\
d\bar{z}%
\end{pmatrix}
=%
\begin{pmatrix}
G & i\mathbf{1}\\
G & -i\mathbf{1}%
\end{pmatrix}%
\begin{pmatrix}
dx\\
d\theta
\end{pmatrix}
\]
whence
\[
\left\Vert dZ\right\Vert _{K}^{2}=\frac{dZ\wedge d\bar{Z}}{(-2i)^{n}%
(dx^{1}\wedge\cdots\wedge dx^{n}\wedge d\theta^{1}\wedge\cdots\wedge
d\theta^{n})}=\frac{1}{(-2i)^{n}}\det%
\begin{pmatrix}
G & i\mathbf{1}\\
G & -i\mathbf{1}%
\end{pmatrix}
=\det G.
\]

\end{proof}

Similarly we have
\[
\left\Vert dZ_{v}\right\Vert _{K}^{2}=\det G_{v},
\]
where $G_{v}=\operatorname{Hess}_{x_{v}}g$.

The curvature of the Chern connection is easily computed, giving
\begin{equation}
F_{\nabla^{K}}=\bar{\partial}\partial\log\det G. \label{eqn:curvK}%
\end{equation}
Let $Ric_{\gamma}$ denote the Ricci curvature tensor of the metric
$\gamma=\omega(\cdot,I\cdot)$, and let $\rho=Ric(I\cdot,\cdot)$ be the
corresponding Ricci form. Then by \cite[Prop. 11.4]{Moroianu} we have
\begin{equation}
F_{\nabla^{K}}=i\rho. \label{eqn:curvKrho}%
\end{equation}
Note that this implies $c_{1}(K)=-c_{1} (X)=-[\rho/2\pi]$.

\begin{lemma}
\label{lemma:canonicalconnections}
The section $dW_{v}=w_{v}^{\mathbf{1}}dZ_{v}$ is a nowhere vanishing
holomorphic section of $K$ on the holomorphic vertex chart $U_{v}$ and, in the induced
trivializations, the Chern connection $1$-forms on $K^{U(1)}$ and $\left\vert
K\right\vert $ are
\begin{align}
{\Theta}_{v}^{K^{U(1)}}  &  =i\sum_{k=1}^{n}d\theta_{v}^{k}+\frac{i}
{2}\left(  \frac{\partial}{\partial{x}_{v}}\log\det G_v\right)  \cdot G_{v}%
^{-1}d{\theta}_{v},\text{ and}\label{eqn:canonical1form}\\
\Theta_{v}^{|K|}  &  =\frac{1}{2}\left(  \frac{\partial}{\partial{x}_{v}%
}\log\det G_v\right)  \cdot d{x}_{v}+G_{v}\,d{x}_{v}.\nonumber
\end{align}
\end{lemma}

\begin{remark}
The following expressions for the connection $1$-forms for the induced
connections $\nabla^{K^{U(1)}}$ and $\nabla^{\left\vert K\right\vert }$ over the open orbit
will also be useful below:
\begin{align}
{\Theta}_0^{K^{U(1)}}  &  =i\Im\Theta=\frac{i}{2}\left(  \frac{\partial}
{\partial{x}}\log\det G\right)  \cdot G^{-1}\,d\,{\theta},\text{
and}\label{eqn:openorbit_1-forms}\\
\Theta^{|K|}_0  &  =\operatorname{Re}\Theta=\frac{1}{2}\left(
\frac{\partial}{\partial{x}}\log\det G\right)  \cdot\,d{x} .\nonumber
\end{align}
\end{remark}

\begin{proof}
If $f$ is real valued, then
\begin{equation}
\Im\partial f=\frac{1}{2}\left(\frac{\partial f}{\partial x}\cdot G^{-1}%
d\theta-\frac{\partial f}{\partial\theta}\cdot G\,dx\right) \label{eqn:Imdbar}%
\end{equation}
and
\begin{equation}
\operatorname{Re}\partial f=\frac{1}{2}\left(  \frac{\partial f}{\partial
x}\cdot dx+\frac{\partial f}{\partial\theta}\cdot d\theta\right)  ,
\label{eqn:Redbar}
\end{equation}
with similar formulas in the vertex charts (with $x^{j}$ and $\theta^{j}$
replaced by $x_{v}^{j}$ and $\theta_{v}^{j}$). With $f=\log\det G$ (observing
that $f=f({x})$) we obtain, from $\Theta_0=\partial\log\det G$, the open orbit
$1$-forms (\ref{eqn:openorbit_1-forms}).

Next, by \cite{Cox}, the \textquotedblleft canonical\textquotedblright%
\ section $dZ=dW/w^{\mathbf{1}}$ (which has the same representation
$dW_{v}/w_{v}^{\mathbf{1}}$ in the  holomorphic vertex charts) has simple poles along each
torus-invariant divisor. To obtain a trivializing section on the holomorphic vertex chart
$U_{v}$, we multiply by a factor with simple zeroes along the divisors
adjacent to $v$ and thus arrive at the desired combination $w_{v}^{\mathbf{1}%
}dZ_{v}$.

To obtain the connection $1$-forms in the chart $U_{v}$ with respect to
$w_{v}^{\mathbf{1}}dZ_{v}$, we first recall the norm
\begin{equation}
\left\Vert w_{v}^{\mathbf{1}}dZ_{v}\right\Vert ^{2}=\left\vert w_{v}%
^{\mathbf{1}}\right\vert ^{2}\det G_{v}. \label{eqn:normwdz}%
\end{equation}
From this, noting that $\det G_{v}$ is a constant multiple of $\det G$, we see
that the Chern connection $1$-form on $U_{v}$ is
\[
\Theta_{v}=\partial\log\left(  \left\vert w_{v}^{\mathbf{1}}\right\vert
^{2}\det G_{v}\right)  =\partial\log\det G_{v}+\sum_{j}\partial\log w_{v}%
^{j}=\partial\log\det G_v +\sum_{j}dz_{v}^{j}.
\]
Since $dz_{v}=G_{v}dx_{v}+id\theta_{v}$, using (\ref{eqn:Imdbar}) and that $G$
depends only on ${x}$, we obtain the connection $1$-form
\[
{\Theta}_{v}^{K^{U(1)}}=i\Im\left(  \partial\log\det G_v +\sum_{j}dz_{v}%
^{j}\right)  =\frac{i}{2}\left(  \frac{\partial}{\partial x}\log\det G_v\right)
\cdot G^{-1}_v\,d\theta_v+i\sum_{j=1}^{n}d\theta_{v}^{j}%
\]
as desired.

Similarly, using (\ref{eqn:Redbar}) we obtain the connection $1$-form%
\[
\Theta_{v}^{|K|}=\operatorname{Re}\left(  \partial\log\det G_v+\sum
_{j}dz_{v}^{j}\right)  =\frac{1}{2}\left(  \frac{\partial}{\partial x_v}\log\det
G_v\right)  \cdot\,dx_v+G_{v}\,dx_{v}%
\]
as desired.
\end{proof}

\bigskip

Note that although $K_I^{U(1)}$ has complex structure independent transition functions,
its Chern connection depends on $I$.

\section{Half-form corrected K\"ahler quantization\label{sec:corrQ}}

\subsection{Motivation}
\label{motiv}

Suppose the square root
$\sqrt{K_{I}}$ of the corresponding canonical bundle exists so that, in particular,
$c_{1}(X)/2$ is integral. Assume, moreover, that $[\omega/2\pi]\in
H^{2}(X,{\mathbb{Z}})$ and let $\ell\rightarrow X$ be a (smooth) hermitian
line bundle with compatible connection with curvature given by $-i\omega$,
that is, $\ell$ is a prequantum line bundle. More specifically, let $\ell$ be an equivariant line bundle
defined as in (\ref{glue}), with $U(1)$-valued
transition functions on the  vertex atlas (\ref{eqn:LU(1)transfns})
\begin{equation}
\tilde{g}_{vv^{\prime}}^{\ell}=e^{i\,(A_{v^{\prime}}A_{v}^{-1}\lambda
_{v}-\lambda_{v^{\prime}}) \cdot{\theta}_{v^{\prime}}}\text{, and }\tilde
{g}_{v0}^{\ell}=e^{i\,\lambda_{v}\cdot{\theta}_{v}},
\label{eqn:uncorrected_trans-fns}
\end{equation}
with $\lambda_v \in \Z$ suitably chosen to have $c_1(\ell)=[\frac{\omega}{2\pi}]$.
Equip $\ell$ with the $U(1)$-connection $\nabla^{\ell}$ given by the connection forms
\begin{eqnarray}\nonumber
\Theta_{0}^{\ell}&=&\frac{\nabla^\ell {\mathbf 1}_0^{U(1)}}{{\mathbf 1}_0^{U(1)}}=i{x}\cdot d{\theta}\,\,
{\rm on}\,\, \check X, \\ \label{eqn:uncorrected_conn}
\Theta_{v}^{\ell}&=&\frac{\nabla^\ell {\mathbf 1}_v^{U(1)}}{{\mathbf 1}_v^{U(1)}}=i{x}
_{v}\cdot d{\theta}_{v}\,\, {\rm on}\,\,U_v, v\in V.
\end{eqnarray}
One may easily check that $\{\Theta_{0}^{\ell}, \Theta_{v}^{\ell}: v\in V\}$ does indeed define a
$U(1)$-connection on $\ell;$ see the comment following
(\ref{eqn:Lconndef_open}).

Since the square of a (local) section $\eta\in\Gamma(\sqrt{K_{I}})$ can be
identified with a (local) section of $K_{I}$, the line bundle $\sqrt{K_{I}}$
inherits a hermitian structure from that of $K_{I}$ given by \cite{Woodhouse}
\begin{equation}
\left\Vert \eta\right\Vert _{\sqrt{K_{I}}}^{2}=\sqrt{\frac{\eta^{2}\wedge
\bar{\eta}^{2}}{(2i)^{n}(-1)^{{n(n+1)}/{2}}\omega^{n}/n!}}.
\label{eqn:rootKherm}%
\end{equation}

This defines a Chern connection $\nabla^{\sqrt{K_{I}}}$on $\sqrt{K_{I}}$ as in
Lemma \ref{lemma:chern connection}. The curvature of
$\nabla^{\sqrt{K_{I}}}$ is then $F_{\nabla^{\sqrt{K_{I}}}}=\frac{i}{2}\rho_{I}$,
where $\rho_{I}$ is the Ricci form on $X$.

The quantum Hilbert space for the half-form corrected K\"{a}hler quantization
of $X$ is defined to be
\[
{\mathcal{H}}_{I}^{Q}:=\left\{  s\in\Gamma(\ell\otimes{\sqrt{K_{I}}}):\left(
\nabla_{\overline{\mathcal{P}}_{I}}^{\ell}\otimes1+1\otimes\nabla
_{\overline{\mathcal{P}}_{I}}^{\sqrt{K_{I}}}\right)  s=0\right\}  ,
\]
where $\mathcal{P}_{I}$ is the holomorphic polarization of $X$ determined by
$I$.

Recall from (\ref{lemma:|E|triv-sec}) that the bundle $|\sqrt{K_{I}}|$ has a trivializing covariantly
constant section
\begin{equation}
\mu_{I}=\frac{{|dZ|}^{\frac{1}{2}}}{\left\Vert dZ\right\Vert _{{K_{I}}}%
^{\frac{1}{2}}}. \label{mumu}
\end{equation}

This defines an isomorphism
\[
{\mathcal{H}}_{I}^{Q}\cong\mathcal{B}_{I}^{Q}\otimes\mu_{I}%
\]
where
\begin{equation}\label{categorical}
\mathcal{B}_{I}^{Q}:=\left\{  s\in\Gamma\left(  \ell\otimes \widetilde{\sqrt{K}}^{U(1)}
\right)  :\left(  \nabla_{\overline{\mathcal{P}}_{I}}^{\ell}%
\otimes1+1\otimes\nabla_{\overline{\mathcal{P}}_{I}}^{\widetilde{\sqrt{K}}^{U(1)}} \right)
s=0\right\}  ,
\end{equation}
Note that, from Lemma \ref{lemma:I-indep}, the unitarization $\ell
\otimes \widetilde{\sqrt{K}}^{U(1)}$ is a smooth complex line bundle independent of $I$. In this way,
using the $I$-dependent isomorphisms above, we can describe the quantum
Hilbert spaces ${\mathcal{H}}_{I}^{Q}$ through the Hilbert spaces
$\mathcal{B}_{I}^{Q}$ which are subspaces of a fixed linear space
$\Gamma\left(  \ell\otimes\widetilde{\sqrt{K}}^{U(1)}\right)  $.

We will now use this representation to motivate the definition of the
half-form corrected quantum Hilbert space in the more general situation when
the canonical bundle of $X$ may not admit a square root.

\subsection{Corrected quantization}
\label{corrq}

Let $(X,\omega, I)$ be a compact smooth toric K\"ahler manifold with toric complex
structure $I$ and
such
that $\left[  \frac{\omega}{2\pi}\right]  -\frac{1}{2}c_{1}(X)$ is an ample
integral cohomology class. Let   the moment polytope be
\begin{equation}\label{t11}
P_X = \left\{x \in \R^n \ : \ \ell_j(x) = \nu_j \cdot x + \lambda_j \geq 0, \ \ j = 1, \dots, r\right\} ,
\end{equation}
where we use the freedom of translating the moment polytope to choose
the  $\{\lambda_j\}_{j=1,\dots,r}$ to be half-integral and defined as follows.
Consider an equivariant complex line bundle $L\cong {\mathcal O}(\lambda^L_1 D_1+\cdots+\lambda^L_rD_r)$ as in (\ref{glue}) and
with $U(1)$-valued transition functions (\ref{eqn:LU(1)transfns}) defined by $\{\lambda^L_j\}_{j=1,\dots,r}$, such that
$c_{1}(L) = \left[\frac{\omega}{2\pi}\right]  -\frac{1}{2}c_{1}(X)$. As in (\ref{eqn:p_Ldef}), the $\{\lambda^{L}%
_{j}\}_{j=1,\cdots, r}$ define a polytope with integral vertices, $P_{L}$.
The half-integral
$\{\lambda_j\}_{j=1,\dots,r}$ in (\ref{t11}) are then defined by
\begin{equation}\label{halflambdas}
\lambda_j := \lambda^{L}_{j}+\frac12\in \frac12 +\Z, \,\,\, j=1,\dots, r,
\end{equation}
in accordance with the fact that ${\rm div}\,(dZ)= - D_1 \cdots -D_r$ (see Lemma \ref{lemmacox}).
 Note that $P_L$ is obtained from
the moment polytope $P_{X}$ by
shifts of $\frac12$ along each of the integral primitive inward
pointing normals. (See Remarks \ref{rmk:CP1} and \ref{rmk:CP2} below for examples.)
We will call $P_{L}\subset P_{X}$ the \textit{corrected polytope}.

 We equip $L$ with a $U(1)$
connection $\nabla^{I}$ with curvature $F_{\nabla^{I}}= -i\omega+\frac{i}
{2}\rho_{I}$. Since $H^{1}(X)=0$ this connection is unique up to isomorphism.

Following the reasoning in the last section, and noticing that $\sqrt{|K_I|}$ and $\mu_I$ (see (\ref{mumu}))
exist always even if $\sqrt{K_I}$ does
not, and that the hermitian structure on $\sqrt{|K_I|}$ gives $||\mu_I||_{\sqrt{|K_I|}}=1$, we set

\begin{definition}\label{qhs}
The quantum Hilbert space for the half-form corrected K\"ahler quantization of
$(X,\omega,L,I)$ is defined by
\[
{\mathcal{H}}^{Q}_{I} = \mathcal{B}_{I}^{Q} \otimes\mu_{I},
\]
where
\[
\mathcal{B}_{I}^{Q}=\{s\in\Gamma(L): \nabla^{I}_{\overline{\mathcal{P}}_{I}}.
s=0\}.
\]
The inner product is defined by
\begin{equation}\label{innerproduct}
\langle \sigma\otimes \mu_I ,\sigma' \otimes \mu_I\rangle = \langle \sigma,\sigma'\rangle = \frac{1}{(2\pi)^n}\int_X h^L(\sigma,\sigma') \frac{\omega^n}{n!}.
\end{equation}
\end{definition}

Now fix a choice of symplectic potential $g$ for the complex structure $I$
on $X$. We define the connection $\nabla^{I}$ on $L$ by (using Lemma
\ref{lemma:canonicalconnections} and (\ref{eqn:uncorrected_conn}))
\begin{align}
\Theta_{v}  &  :=\frac{\nabla^{I}\mathbf{1}_{v}^{U(1)}}{\mathbf{1}_{v}^{U(1)}%
}=-i\,{x}_{v}\cdot d{\theta}_{v}+\frac{i}{2}\sum_{k=1}^n d\theta_{v}^{k}+\frac
{i}{4}\left(  \frac{\partial}{\partial x_{v}}\log\det G_{v}\right)  \cdot
G_{v}^{-1}d\theta_{v}\label{eqn:Lconndef_vert}\\
&  =-i\,{x}_{v}\cdot d{\theta}_{v}+\frac{i}{2}\Im\left(  \partial\log\det
G_{v}+\sum_{k=1}^{n}dz_{v}^{k}\right). \nonumber
\end{align}
On the open orbit $\check X$, the connection is then given by
\begin{align}
\Theta_{0}  &  :=-i\,{x}\cdot d{\theta}+\frac{i}{4}\left(  \frac{\partial
}{\partial x}\log\det G\right)  \cdot G^{-1}d\theta\label{eqn:Lconndef_open}\\
&  =-i\,{x}\cdot d{\theta+}\frac{i}{2}\Im\partial\log\det G.\nonumber
\end{align}
One may check that $\Theta_{v}-\Theta_{v^{\prime}}=d\log\tilde{g}_{v^{\prime
}v}^{L}$ and $\Theta_{v}-\Theta_{0}=d\log\tilde{g}_{0v}^{L}$ so that
$\{\Theta_{0},\Theta_{v}: v\in V\}$ does indeed define a $U(1)$-connection on $L$.

\begin{remark}
Note that, even though $d\theta_v^j$ is singular as $x_v^j \to 0$, (\ref{eqn:Lconndef_vert})
defines a non-singular 1-form on $U_v$, as can be verified by studying the
behavior of $G_v$ or using the coordinates $\{a_v^j,b_v^j\}_{j=1,\dots,n}$.
\end{remark}

The complex structure $I$ and the connection $\nabla^{I}$ combine to give a
holomorphic structure on $L$ which we can describe by giving the resulting
$I$-holomorphic sections of $L$.

Let $h_{0}^{I}({x})={x} \cdot\partial g/\partial{x}-g$ and $h_{v}^{I}({x}%
_{v})={x}_{v}\cdot\partial g/\partial{x}_{v}-g$. Also, note that $\det G_v = (\det A_v)^{-2}\det G $.

\begin{lemma}
\label{lemma:holsections}An $I$-holomorphic section of $L$, $s\in\mathcal{B}_{I}^{Q}$,
is locally given by $\left.  s\right\vert _{U_{v}}=s_{v}%
\mathbf{1}_{v}^{U(1)}$ where the function $s_{v}\in C^{\infty}(U_{v})$ is of
the form
\begin{equation}
F_{v}(w_{v})e^{-h_{v}^{I}({x}_{v})}e^{-i\mathbf{1}/2\cdot{\theta}_{v}%
}\left\Vert dZ_{v}\right\Vert _{K_I}^{1/2}. \label{eqn:holsec_vertex}
\end{equation}
On the orbit one then obtains $\left.  s\right\vert _{U_{0}}=s_{0}\mathbf{1}_{0}^{U(1)},$
where the function $s_{0}\in C^{\infty}(U_{0})$ is of the form
\begin{equation}
F_{0}(w)e^{-h_{0}^{I}({x})}\left\Vert dZ\right\Vert _{K_I}^{1/2},
\label{eqn:holsec_open}%
\end{equation}
where $F_{0}$ is holomorphic and $F_{v}(w_{v})=w_{v}^{\lambda_{v}}F_{0}%
(w_{v}^{A_{v}}) |\det A_v|^{\frac12}.$
\end{lemma}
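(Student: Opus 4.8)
The plan is to unwind the defining equation $\nabla^{I}_{\overline{\mathcal{P}}_I}s = 0$ in the explicit trivializations $\mathbf{1}_v^{U(1)}$ and $\mathbf{1}_0^{U(1)}$, compute the $(0,1)$-part of the connection form from (\ref{eqn:Lconndef_vert})--(\ref{eqn:Lconndef_open}), and solve the resulting first-order PDE. First I would work on the open orbit $U_0$, where the antiholomorphic directions are spanned by $\partial/\partial\bar z^j$ with $z = \partial g/\partial x + i\theta$. Writing $s|_{U_0} = s_0 \,\mathbf{1}_0^{U(1)}$, the condition $\nabla^I_{\overline{\mathcal{P}}_I}s = 0$ becomes $(\bar\partial s_0 + (\Theta_0)^{0,1}\,s_0) = 0$. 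Using (\ref{eqn:Redbar}) and (\ref{eqn:Imdbar}) to extract $\Im\,\partial\log\det G$ and to rewrite $-i\,x\cdot d\theta$ in terms of $dz,d\bar z$, one finds $(\Theta_0)^{0,1} = \bar\partial\big(h_0^I(x)\big) - \tfrac12\bar\partial\log\det G$ — here the key identity is that $x = \partial h/\partial y$ is the Legendre-inverse of $y = \partial g/\partial x$, so that $dh_0^I = x\cdot d(\partial g/\partial x) = \Re(\text{something}) $ pairs correctly against the $d\bar z$ part of $-i\,x\cdot d\theta$; I would check this by direct computation using $h_0^I = x\cdot y - g$ and $y = \partial g/\partial x$. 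Consequently $s_0 = F_0(w)\,e^{-h_0^I(x)}\,(\det G)^{1/2}$ with $F_0$ $I$-holomorphic, which is (\ref{eqn:holsec_open}) once we recall $\|dZ\|_{K_I}^2 = \det G$ from Lemma \ref{lemma:normdz}.

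Next I would repeat the computation in a vertex chart $U_v$. Here the trivializing section of $L^{U(1)}$ is $\mathbf{1}_v^{U(1)}$ and the connection form $\Theta_v$ from (\ref{eqn:Lconndef_vert}) has the extra term $\tfrac{i}{2}\sum_k d\theta_v^k$ compared to the open-orbit form; taking the $(0,1)$-part and using $dz_v = G_v\,dx_v + i\,d\theta_v$ one gets an additional factor, which upon integrating the PDE produces exactly the $e^{-i\mathbf{1}/2\cdot\theta_v}$ prefactor in (\ref{eqn:holsec_vertex}), together with $e^{-h_v^I(x_v)}\|dZ_v\|_{K_I}^{1/2} = e^{-h_v^I(x_v)}(\det G_v)^{1/2}$ by the same Legendre argument applied in the vertex coordinates (note $h_v^I(x_v) = x_v\cdot\partial g/\partial x_v - g$ is genuinely the same function $h$ expressed in the new coordinates, since $g$ is coordinate-independent as a function on the polytope). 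The function $F_v(w_v)$ is then forced to be $I$-holomorphic on $U_v$.

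Finally I would pin down the transition relation $F_v(w_v) = w_v^{\lambda_v} F_0(w_v^{A_v})\,|\det A_v|^{1/2}$ by comparing the two local expressions on $U_v \cap U_0$. On the overlap, $s_0\,\mathbf{1}_0^{U(1)} = s_v\,\mathbf{1}_v^{U(1)}$, so $s_0 = \tilde g_{0v}^{L}(w)\,s_v = e^{-i(A_v^{-1}\lambda_v^L)\cdot\theta}\,s_v$ using the $L^{U(1)}$ transition function from (\ref{eqn:LU(1)transfns}) (with $\lambda_v^L = \lambda_v - \tfrac12$). Substituting the two formulas for $s_0,s_v$, the exponential potentials $e^{-h_0^I}$ vs.\ $e^{-h_v^I}$ match because $h$ is a function on the polytope independent of the chart; the density factors satisfy $\|dZ\|^{1/2}/\|dZ_v\|^{1/2} = (\det G/\det G_v)^{1/2} = |\det A_v|$ by the identity $\det G_v = (\det A_v)^{-2}\det G$ recorded just before the lemma; and the coordinate change $w = w_v^{A_v}$ from (\ref{eqn:ww_v}) relates $F_0(w)$ and $F_v(w_v)$. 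Collecting the monomial factors $e^{i(\cdots)\cdot\theta}$ — namely the $e^{-i\mathbf{1}/2\cdot\theta_v}$ from $s_v$, the $e^{-i(A_v^{-1}\lambda_v^L)\cdot\theta}$ transition, and the conversion $\theta = {}^tA_v\,\theta_v$ — yields precisely the power $w_v^{\lambda_v}$, and the density ratio yields the $|\det A_v|^{1/2}$. The main obstacle I anticipate is bookkeeping the various sign conventions and the $\tfrac12$-shifts between $\lambda_v$ and $\lambda_v^L$ consistently through the $(0,1)$-projections and the ${}^tA_v^{-1}$ transformations of the angle coordinates; the analytic content (solving a $\bar\partial$-type equation) is routine, but getting the half-form prefactor $e^{-i\mathbf{1}/2\cdot\theta_v}$ and the exponent $w_v^{\lambda_v}$ (rather than $w_v^{\lambda_v^L}$ or $w_v^{\lambda_v^L + \mathbf{1}/2}$) to come out exactly right requires care.
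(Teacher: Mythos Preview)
Your overall strategy---solve the $\bar\partial$-type equation on $U_0$, then match across the overlap via the $L^{U(1)}$ transition functions---is exactly the paper's approach. However, two concrete errors would prevent the computation from closing.

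First, the half-form exponent is off by a factor of two. From $\Theta_0 = -i\,x\cdot d\theta + \tfrac{i}{2}\Im\partial\log\det G$ and $\tfrac{i}{2}\Im\partial = \tfrac14(\partial-\bar\partial)$, the $(0,1)$-part of the correction term is $-\tfrac14\bar\partial\log\det G$, not $-\tfrac12$. Solving the PDE then gives $s_0 = F_0(w)e^{-h_0^I}(\det G)^{1/4}$, which is indeed $\|dZ\|_{K_I}^{1/2}$ since $\|dZ\|_{K_I}^2=\det G$. Your exponent $1/2$ would instead produce $\|dZ\|_{K_I}$, and the density ratio on the overlap would then be $|\det A_v|$ rather than the required $|\det A_v|^{1/2}$ (you in fact write both, inconsistently).

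Second, and more substantively, the claim that ``the exponential potentials $e^{-h_0^I}$ vs.\ $e^{-h_v^I}$ match because $h$ is a function on the polytope independent of the chart'' is false. The Legendre transform is \emph{not} invariant under the affine change $x_v = A_v x + \lambda_v$: one has $x_v\cdot y_v = x\cdot y + \lambda_v\cdot y_v$, hence $h_0^I = h_v^I - \lambda_v\cdot y_v$. The discrepancy $e^{\lambda_v\cdot y_v} = |w_v^{\lambda_v}|$ is exactly the modulus of the monomial $w_v^{\lambda_v}$; it combines with the phase $e^{i\lambda_v\cdot\theta_v}$ (coming from the $U(1)$ transition function and the $e^{-i\mathbf{1}/2\cdot\theta_v}$ term, using $\lambda_v^L=\lambda_v-\tfrac12$) to assemble $w_v^{\lambda_v}$. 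Your attempt to produce $w_v^{\lambda_v}$ from phase factors alone cannot work, since those are unimodular. Once you correct this---and the $1/4$ exponent---the matching goes through and gives $F_v(w_v)=w_v^{\lambda_v}F_0(w_v^{A_v})|\det A_v|^{1/2}$ as stated.
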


\begin{remark}
  Since the $\lambda_v$ are generally only half-integer, the ``functions'' $F_v$ in the above theorem are not single valued. However, $F_ve^{-i\frac{\mathbf 1}{2}\cdot \theta_v}$ is single valued. In fact, the collection
$\{F_0,F_v: v\in V\}$ defines a ramified section of $L$. There are other possible geometric interpretations of such an object,
for instance through the notion of Kawamata covering, but we will not pursue them here.
\end{remark}

\begin{proof}
We compute first in the open orbit. From
(\ref{partialz}), we see that
\[
-i\,{x}\cdot d{\theta}\left(  \frac{\partial}{\partial\bar{z}^{j}}\right)
=\frac{1}{2}x^{j}.
\]
Recall that ${x}$ can be expressed in terms of ${y}$ via the Legendre
transform ${x}=\partial h_{0}^{I}/\partial{y}$, where $h_{0}^{I}={x}\cdot
{y}-g$. Since $\partial/\partial z^{j}=\frac{1}%
{2}(\partial/\partial y^{j}-i\partial/\partial\theta^{j})$, we have
$x^{j}=2\partial h_{0}^{I}/\partial\bar{z}^{j}$ so that
\begin{equation}
-i\,{x}\cdot d{\theta}\left(  \frac{\partial}{\partial\bar{z}^{j}}\right)
=\frac{\partial h_{0}^{I}}{\partial\bar{z}^{j}}.
\label{eqn:(0,1)-deriv-uncorr}%
\end{equation}

Next, from Lemma \ref{lemma:canonicalconnections} we know that
\begin{align*}
\frac{i}{4} \left(  \frac{\partial}{\partial x} \log\det G\right)
G^{-1}d\theta &  =\frac{i}{2}\Im\partial\log\det G\\
&  =\frac{1}{4}(\partial-\bar{\partial})\log\det G
\end{align*}
so that
\begin{equation}
\frac{i}{4} \left(  \frac{\partial}{\partial x} \log\det G\right)  G^{-1}
d\theta\left(  \frac{\partial}{\partial\bar{z}^{j}}\right)  =-\frac{1}{4}%
\frac{\partial}{\partial\bar{z}^{j}}\log\det G.
\label{eqn:(0,1)-deriv-halfform}%
\end{equation}

Combining (\ref{eqn:(0,1)-deriv-uncorr}) and (\ref{eqn:(0,1)-deriv-halfform}),
we see that a section $s=f\mathbf{1}_{0}^{U(1)}\in\Gamma_{U_{0}}(L)$ is
holomorphic if and only if $f$ satisfies the differential equation
\[
\frac{\partial f}{\partial\bar{z}^{j}}+f\,\frac{\partial}{\partial\bar{z}^{j}
}\left(  h_{0}^{I}-\frac{1}{4}\log\det G\right)  =0,
\]
for each $j=1,\dots,n$. We solve this easily to see that $s$ is holomorphic if
and only if $f$ is of the form
\[
f=F(w)e^{-h_{0}^{I}}(\det G)^{1/4},
\]
where $F$ is an holomorphic function in $U_0$.
From Lemma \ref{lemma:normdz}, we recognize $(\det G)^{1/4}=\left\Vert
dZ\right\Vert _{K}^{1/2}$ to obtain (\ref{eqn:holsec_open}) as desired.

We have computed that $F(w)e^{-h_{0}^{I}}(\det G)^{1/4}\mathbf{1}_{0}^{U(1)}$
is holomorphic. Using the transition functions $\tilde{g}_{v0},$ we conclude
that $F(w)e^{-h_{0}^{I}}(\det G)^{1/4}\tilde{g}_{v0}(w_{v})\mathbf{1}_{v}^{U(1)}$,
when expressed in terms of $w_{v}$, should be holomorphic.
First, note that
\[
x\cdot{y}={x}_{v}\cdot{y}_{v}-\lambda_{v}\cdot{y}_{v}%
\]
which implies $h_{0}^{I}=h_{v}^{I}-\lambda_{v}\cdot{y}_{v}$. From
(\ref{eqn:Ltransfns}) we therefore see that the holomorphic combination in
$U_{v}$ should be
\begin{align*}
F(w)e^{-h_{v}^{I}+\lambda_{v}\cdot{y}_{v}}(\det G)^{1/4}e^{i\lambda_{v}%
^{L}\cdot{\theta}_{v}}\mathbf{1}_{v}^{U(1)}  &  =F(w)e^{-h_{v}^{I}%
}e^{\lambda_{v}\cdot{y}_{v}+i\lambda_{v}\cdot{\theta}_{v}}e^{-i\mathbf{1}%
/2\cdot{\theta}_{v}}(\det G)^{1/4}\mathbf{1}_{v}^{U(1)}\\
&  =F(w)w_{v}^{\lambda_{v}}e^{-h_{v}^{I}}e^{-i\mathbf{1}/2\cdot{\theta}_{v}
}|\det A_v|^{\frac12}\left\Vert dZ_{v}\right\Vert _{K_I}^{1/2}\mathbf{1}_{v}^{U(1)}.
\end{align*}
By (\ref{eqn:ww_v}) we see that if we set $F_{v}(w_{v}):=F(w_{v}^{A_{v}}%
)w_{v}^{\lambda_{v}}|\det A_v|^{\frac12}$, we obtain (\ref{eqn:holsec_vertex}) and the final
statement of the lemma as desired.
\end{proof}

\bigskip

\begin{remark}
We can rewrite a local holomorphic section on $U_{v}\cap U_0$ in a way similar to that
in \cite{Baier-Florentino-Mourao-Nunes} as follows:%
\begin{align*}
F_{v}(w_{v})e^{-h_{v}^{I}({x}_{v})}e^{-\mathbf{1}/2\cdot{\theta}_{v}%
}\left\Vert dZ_{v}\right\Vert _{K_I}^{1/2}  &  =F_{0}(w_{v}^{A_{v}}%
)w_{v}^{\lambda_{v}}e^{-h_{v}^{I}({x}_{v})}e^{-i\mathbf{1}/2\cdot{\theta}_{v}%
}|\det A_v|^{\frac12}\left\Vert dZ_{v}\right\Vert _{K_I}^{1/2}\\
&  =F_{0}e^{-h_{v}^{I}+\lambda_{v}\cdot{y}_{v}}e^{i\lambda_{v}^{L}\cdot
{\theta}_{v}}|\det A_v|^{\frac12} \left\Vert dZ_{v}\right\Vert _{K_I}^{1/2}.
\end{align*}
The combination $h_{v}^{I}-\lambda_{v}\cdot{y_v}$ corresponds to $h_{m}$ (for
$m=v$) in \cite{Baier-Florentino-Mourao-Nunes}.
\end{remark}

\begin{theorem}
\label{cor:holsections} The Hilbert space $\mathcal{B}_{I}^{Q}$ of holomorphic sections of $L$ has an orthogonal
basis $\{\sigma^{m}\}_{m\in P_{L}\cap
\mathbb{Z}^{n}}$ where $\sigma^{m}$ is locally given by
\begin{align*}
\sigma_{0}^{m}  &  =w^{m}e^{-h_{0}^{I}}\left\Vert dZ\right\Vert _{K_I}%
^{1/2}\mathbf{1}_{0}^{U(1)}\text{, and}\\
\sigma_{v}^{m}  &  =w_{v}^{A_{v}m+\lambda_{v}^{L}}e^{-h_{v}^{I}+\mathbf{1}%
/2\cdot{y}_{v}}|\det A_v|^{\frac12}\left\Vert dZ_{v}\right\Vert _{K_I}^{1/2}\mathbf{1}_{v}^{U(1)},
\end{align*}
over the open orbit and holomorphic vertex charts, respectively. The corresponding orthogonal basis for the quantum Hilbert
space ${\cal H}^Q_I$ is given by
$\{\hat\sigma^m:=\sigma^m\otimes \mu_I\}_{m\in P_{L}\cap
\mathbb{Z}^{n}}$.
\end{theorem}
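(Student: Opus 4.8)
The plan is to combine Lemma \ref{lemma:holsections}, which describes the local form of any section in $\mathcal{B}_I^Q$, with the earlier identification (\ref{hzero})--(\ref{eqn:p_Ldef}) of the holomorphic sections of the underlying holomorphic line bundle $L = \mathcal{O}(D^L)$. First I would observe that by Lemma \ref{lemma:holsections} a section $s\in\mathcal{B}_I^Q$ is, on the open orbit $U_0$, of the form $F_0(w)e^{-h_0^I}\|dZ\|_{K_I}^{1/2}\mathbf{1}_0^{U(1)}$ with $F_0$ holomorphic on $U_0\cong(\C^*)^n$, and the transition-function gluing forces $F_v(w_v) = F_0(w_v^{A_v})w_v^{\lambda_v}|\det A_v|^{1/2}$ to extend holomorphically (as the $F_v e^{-i\mathbf{1}/2\cdot\theta_v}$ part) across the divisors $D_j\cap U_v$ adjacent to $v$. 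This is exactly the condition on $F_0$ that makes $F_0\,\sigma_{D^L}$ a bona fide element of $H^0(X,L)$; so expanding $F_0$ in a Laurent series and applying (\ref{eqn:Coxformula}), only the monomials $w^m$ with $\langle m,\nu_j\rangle+\lambda_j^L\ge 0$ for all $j$ survive, i.e.\ $m\in P_L\cap\Z^n$. Plugging $F_0=w^m$ into (\ref{eqn:holsec_open}) and (\ref{eqn:holsec_vertex}) and using $h_v^I = h_0^I - \lambda_v\cdot y_v$ together with $A_v m + \lambda_v^L = \ell_v(m)$ gives exactly the stated local expressions for $\sigma^m$ (the exponent $\mathbf{1}/2\cdot y_v$ arising from $\lambda_v\cdot y_v - \frac12\mathbf{1}\cdot y_v$ combined with the reorganization in the Remark following Lemma \ref{lemma:holsections}, since $\lambda_v^L=\lambda_v-\frac12\mathbf{1}$).

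Next I would verify that each $\sigma^m$ is genuinely a smooth global section and is square-integrable, hence lies in $\mathcal{B}_I^Q$, so that $\{\sigma^m\}_{m\in P_L\cap\Z^n}$ spans: the smoothness across the boundary divisors follows from the vertex expressions (the factor $w_v^{\ell_v(m)}$ has nonnegative exponents precisely because $m\in P_L$, and the half-form density $\|dZ_v\|_{K_I}^{1/2}$ together with $e^{\mathbf{1}/2\cdot y_v}$ controls the behaviour along $\mu^{-1}(\partial P_X)$, just as in Lemma \ref{lemma:canonicalconnections} where (\ref{eqn:Lconndef_vert}) was checked to be nonsingular); square-integrability over the compact $X$ then is automatic. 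Finiteness of the span matches $\dim H^0(X,L) = \#(P_L\cap\Z^n)$, so no non-monomial sections can appear.

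For orthogonality I would pass to the $L^2$-pairing (\ref{innerproduct}) and compute $\langle\sigma^m,\sigma^{m'}\rangle$ on the open orbit, which is dense. Using the identification of $L$ with $L^{U(1)}$ via the hermitian structure (Remark \ref{quotas}) and the explicit trivialization, the integrand becomes $w^m\overline{w^{m'}}e^{-2h_0^I}\|dZ\|_{K_I}\,\frac{\omega^n}{n!}$ up to the constant; writing $w^m\overline{w^{m'}} = e^{(m+m')\cdot y}e^{i(m-m')\cdot\theta}$ and recalling $\omega^n/n! = dx^1\wedge\cdots\wedge dx^n\wedge d\theta^1\wedge\cdots\wedge d\theta^n$ on $\check X$, the $\theta$-integral over $\mathbb{T}^n$ yields $(2\pi)^n\delta_{m,m'}$, giving orthogonality immediately (the remaining $x$-integral, being the squared norm, is finite by the square-integrability just established).

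The main obstacle I expect is the global regularity/$L^2$ check along $\mu^{-1}(\partial P_X)$: one must confirm that the half-form factor $\|dZ_v\|_{K_I}^{1/2} = (\det G_v)^{1/4}$ precisely tames the singular behaviour of $e^{-h_v^I}$ and the $w_v$-monomial near the torus-fixed loci, so that $\sigma^m$ is smooth and $\|\sigma^m\|^2<\infty$ for every $m\in P_L\cap\Z^n$ including boundary lattice points. This is where the half-form correction does its work — without it, the analogous sections would only be $L^1$ — and it requires the asymptotics of the symplectic potential $g$ encoded in (\ref{eqn:gP})--(\ref{eqn:greg}); the bijection with $P_L\cap\Z^n$ and the orthogonality are then comparatively routine consequences of (\ref{hzero}) and Fourier analysis on $\mathbb{T}^n$.
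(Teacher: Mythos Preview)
Your proposal is correct and follows essentially the same route as the paper's own proof: invoke Lemma~\ref{lemma:holsections}, expand $F_0$ in Laurent monomials on $(\C^*)^n$, identify the pole-free condition with $m\in P_L\cap\Z^n$, and obtain orthogonality by integrating over $\mathbb{T}^n$. The paper's proof is considerably terser, simply asserting that ``such a section will have poles unless $m$ belongs to the corrected polytope $P_L$'' and that orthogonality ``follows immediately from integration along $T^n$''.

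One small overcomplication worth flagging: your ``main obstacle'' paragraph treats the boundary regularity and $L^2$-finiteness as a genuine analytic hurdle, but in fact once you have established that $\sigma^m$ is a global holomorphic section of $L$ (no poles, because $\ell_v(m)\ge 0$ for $m\in P_L$), smoothness and square-integrability on the compact $X$ are automatic. The detailed asymptotic bookkeeping using (\ref{eqn:gP})--(\ref{eqn:greg}) is not needed here --- it surfaces later in the paper, in the proof of Lemma~\ref{lemma:L2asymp}, where one must verify continuity of a specific integrand on $P_X$. Also, your parenthetical remark that without the half-form correction ``the analogous sections would only be $L^1$'' is not quite right: in the uncorrected setting of \cite{Baier-Florentino-Mourao-Nunes} the holomorphic sections are still smooth and $L^2$ on compact $X$; the $L^1$ normalization there was needed for the \emph{convergence} to delta distributions under degeneration, not for membership in the Hilbert space.
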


\begin{proof}
From Lemma \ref{lemma:holsections} we can certainly find a basis for the space
of holomorphic sections of $L$ consisting of elements given locally over the
open orbit by
\[
w^{m}e^{-h_{0}^{I}}\left\Vert dZ\right\Vert _{K_I}^{1/2}\mathbf{1}_{0}^{U(1)},
\]
where $m\in\mathbb{Z}^{n}$, with the corresponding expressions over the holomorphic vertex
charts. Such a section will have poles unless $m$
belongs to the corrected polytope $P_{L}$. The fact that $\sigma^m$ and $\sigma^{m'}$ are orthogonal for
$m\neq m'$ follows immediately from integration along $\mathbb{T}^n$.
\end{proof}

\bigskip
Therefore, the space of half-form corrected holomorphic wave functions for the
K\"ahler quantization of $X$ has a natural basis whose elements are labeled by
the integral points of the corrected polytope $P_{L}$. These coincide also with the
(interior) integral points of the moment polytope $P_{X}$ and
they correspond to shifted nonsingular Bohr--Sommerfeld fibers of $(X,\omega)$.

\begin{remark}\label{rmk:CP1}
Pictured below is the moment polytope $P_X$ for $X=\mathbb{C}\mathbb{P}^{1}$, in the case
$[\frac{\omega}{2\pi}]=\frac32 c_1(\mathbb{C}\mathbb{P}^{1})=3c_1({\cal O}(1))$. On the left, we show the more
standard choice of moment polytope, with integral vertices. On the right, we show the moment
polytope chosen in accordance with (\ref{t11}) and (\ref{halflambdas}),
that is such that $\lambda_1,\lambda_2$ are half-integral. In this example,
$L\cong {\cal O}(2)$, the corrected polytope is $P_L=[0,2]$ and the moment polytope
is $P_X=[-\frac12,\frac32]$. One has $\dim {\cal H}^Q=3.$
\begin{center}
\begin{pspicture*}(-50pt,-30pt)(65pt,15pt) \psset{unit=20pt,xunit=15pt,yunit=15pt}
\pcline{|-|}(-2,0)(4,0)
\psdots(-2,0)(0,0)(2,0)(4,0)
\rput(-2,-1){$0$}
\rput(0,-1){$1$}
\rput(2,-1){$2$}
\rput(4,-1){$3$}
\end{pspicture*}
$\qquad$
\begin{pspicture*}(-50pt,-30pt)(65pt,15pt) \psset{unit=20pt,xunit=15pt,yunit=15pt}
\pcline{|-|}(-2,0)(4,0)
\psdots(-1,0)(1,0)(3,0)
\rput(-2,-1){$-\tfrac{1}{2}$}
\rput(-1,-1){$0$}
\rput(1,-1){$1$}
\rput(3,-1){$2$}
\rput(4,-1){$\tfrac{5}{2}$}
\end{pspicture*}
\end{center}
\end{remark}

\begin{remark}\label{rmk:CP2}
According to Theorem \ref{cor:holsections}, we can count holomorphic sections
of $L$ by counting integral points {\it inside} the moment polytope $P_X$,
which are exactly the integral
points which occur in the corrected polytope $P_{L}$. Pictured below is one of such
polytopes when $X=\mathbb{C}\mathbb{P}^{2}\#\overline{\mathbb{C}\mathbb{P}}^{2}$, that is,
$\mathbb{C}\mathbb{P}^{2}$ blown up at a point. \bigskip
\begin{center}
\begin{pspicture*}(-45pt,-25pt)(65pt,65pt) \psset{unit=20pt,xunit=20pt,yunit=20pt} \pspolygon[linestyle=solid,fillstyle=none](-1,1)(-1,2)(0,2)(1,1) \pspolygon[linestyle=solid,fillstyle=none](-1.5,0.5)(-1.5,2.5)(0,2.5)(2,0.5)
\psdots(-2,0)(-1,0)(0,0)(1,0)(2,0)(3,0)(-2,1)(-1,1)(0,1)(1,1)(2,1)(3,1)(-2,2)(-1,2)(0,2)(1,2)(2,2)(3,2)(-2,3)(-1,3)(0,3)(1,3)(2,3)(3,3) \end{pspicture*}
\end{center}
\end{remark}

\begin{remark}
One can consider the case $[\omega/2\pi]=c_{1}(X)/2,$ where
$L= {\mathcal O}_X$ is just the structure sheaf of $X$ (which is,
of course, not ample). In this case, there is only one
integral point inside the polytope $P_{X}$ corresponding to the constant
function ${1}\in H^0({\mathcal O}_X)$. As we will see in Section \ref{sec:families}, when we study degenerations of
the complex structure, also in this case we have convergence to a Dirac delta distribution supported
on the (shifted) Bohr--Sommerfeld fiber above that integral point.
\end{remark}

\section{Half-form corrected  quantization in the singular real toric polarization}
\label{sec:real}

\subsection{Distributional sections}

\label{subsec:distsect}

In order to study quantization in the real toric polarization of $(X,\omega)$,
following \cite{Baier-Florentino-Mourao-Nunes},
we consider distributional sections of $L$. Let us briefly recall how one can
define covariant differentiation in this case. Let $L^2(L)$ denote the Hilbert
space of $L^2$ sections of $L$. Consider the rigged Hilbert space (see Sections 4.2 and 4.3 of \cite{GV})
$(\Gamma(L), L^2(L),\Gamma(\bar{L})^{\prime})$, where
$\Gamma(\bar{L})^{\prime}$ is the space of distributional sections of $L$ given by the topological dual
of $\Gamma(\bar L)$. One has the continuous inclusions
\[
\Gamma(L)\subset L^{2}(L)\subset\Gamma(\bar{L})^{\prime},
\]
where we embed $\sigma\in\Gamma(L)\mapsto i(\sigma)\in\Gamma(\bar{L})^{\prime
}$ via the Liouville volume form; i.e.
\[
i(\sigma)(\bar{\tau}):=\frac{1}{(2\pi)^{n}}\int_{X}h^{L}(\sigma,\tau
)\frac{\omega^{n}}{n!}.
\]
(In particular, we may view $I$-holomorphic sections of $L$ as
distributional sections.) We have then, for $\sigma\in\Gamma(L)$,
\[
i(\sigma)(\bar{\tau})=\langle\sigma,\tau\rangle_{L}, \,\,\forall\tau\in
\Gamma(L).
\]

Let $\nabla^{L}$ be a connection on $L$ and let $\nabla^{*}$ be the adjoint of
the (unbounded) operator $\nabla^{L}$ on the Hilbert space $L^{2}(X,L)$, so
that
\[
i(\nabla^{L}\sigma)(\bar\tau) = \langle\nabla^{L} \sigma,\tau\rangle_{L} =
\langle\sigma,\nabla^{*}\tau\rangle_{L},\,\, \forall\sigma,\tau\in\Gamma(L).
\]

We can now define covariant differentiation of distributional sections, which
we will still denote by $\nabla^{L}$, by
\[
\nabla^{L} (\sigma) (\bar\tau) = \sigma(\overline{\nabla^{*} \tau}),\,\,
\sigma\in\Gamma(\bar L)^{\prime}, \tau\in\Gamma(L)
\]
so that, as distributions
\[
\nabla^{L} (i\sigma)= i(\nabla^{L} \sigma), \,\, \forall\sigma\in\Gamma(L).
\]

In the next sections, we will interpret holomorphic sections as distributional sections in this way, and we
will identify $\sigma\in\Gamma(L)$ with $i(\sigma)\in \Gamma(\bar L)'$.

\subsection{Quantization in the (singular) real
polarization\label{subsec:realQuant}}

Recall that the real singular toric polarization is defined by
\[
\mathcal{P}_{\mathbb{R}}(p)=\operatorname*{span}\nolimits_{\mathbb{C}
}\left\{  \left(\frac{\partial}{\partial\theta^{i}}\right)_p, i=1,\dots,n\right\},\,\, \forall p\in X  .
\]
In this section, we will define the
half-form corrected quantization of $X$ in this polarization directly in terms
of covariantly constant sections. Recall
the families of toric complex structures considered in
\cite{Baier-Florentino-Mourao-Nunes}. For any smooth function $\psi$ which is strictly convex
on a neighborhood of $P_{X}$, for any  $\varphi\in C_{P_{X}}^{\infty}(P_X)$ and for any
$s\in\mathbb{R}_{\geq 0}$, the sum $\varphi +s\psi$ is in $C_{P_{X}}^{\infty}(P_X)$ and
hence defines a K\"{a}hler
structure on $X$ with symplectic potential
\[
g_{s}:=g_{P_{X}}+\varphi+s\psi.
\]
Denote the corresponding $s$-dependent complex structure by $I_{s}$.

If ${\mathcal P}$ is a polarization of $(X,\omega)$, denote by $C^\infty({\mathcal P})$
its space of smooth sections,
$$C^\infty({\mathcal P})=\{\xi\in C^\infty(TX \otimes \C): \xi (p)\in {\mathcal P}_p\},$$
where $C^\infty(TX \otimes \C)$ is the space of smooth sections of the complexified tangent bundle of $X$.

\begin{theorem}(\cite{Baier-Florentino-Mourao-Nunes}, Theorem 1.2 p. 415, Theorem 3.4, p. 429)\\
Pointwise on the dense open orbit $\check X$, as vector fields,
$$
\frac{\partial}{\partial \bar z^j_s} = \frac12 \left(\frac{\partial}{\partial y_s^j} +i \frac{\partial}
{\partial \theta^j}\right) \rightarrow
\frac{i}2 \frac{\partial}{\partial \theta^j} , \,\, {\rm as}\,\, s\to\infty.
$$
Therefore, at each point $p\in \check X$, the holomorphic polarizations
$\mathcal{P}_{s}
,~s\geq 0$ of $X$,  associated to the complex structures $I_{s}$,
converge, as $s\rightarrow\infty$, to the real toric
polarization, in the Lagrangian Grassmannian of $T_pX \otimes \C$ and
\begin{equation}\label{polar}
C^{\infty}(\lim_{s\rightarrow\infty}\mathcal{P}_{s})=C^{\infty}(\mathcal{P}_{\mathbb{R}}).
\end{equation}
\end{theorem}

\begin{remark}
The equality in (\ref{polar}) is an equality of spaces of smooth sections of two different
polarizations $\lim_{s\rightarrow\infty}\mathcal{P}_{s}$  and $\mathcal{P}_{\mathbb{R}}$
which coincide over $\check X$ but not over $X\setminus \check X$.
(See Theorem 3.4 of \cite{Baier-Florentino-Mourao-Nunes}).
\end{remark}

{}From the expressions for the half-form corrected connection in
(\ref{eqn:Lconndef_vert}), we see that in the local trivializations $\mathbf{1}_v^{U(1)}$
\begin{equation}
-2i \nabla^{I_{s}}_{{\partial/\partial\bar z_{s_{v}}^{j}}} \rightarrow
\nabla^\R_{{\partial}/{\partial \theta^j_v}}:=\frac{\partial}{\partial \theta_v^j}-ix_{v}^{j}+\frac{i}{2} ,\text{ as }s\rightarrow\infty,
\label{limitconnvert}
\end{equation}
in the sense that
\begin{equation}\nonumber
-2i \left( \nabla_{{\partial}/{\partial\bar z_{s_{v}}^{j}}}^{I_{s}} \sigma \right)(\bar\tau)\rightarrow
\left(\nabla^\R_{{\partial}/{\partial \theta^j_v}}\sigma\right)(\tau) ,\text{ as }s\rightarrow\infty,
\end{equation}
$\forall\sigma \in \Gamma(\bar L)'$, $\forall \tau\in\Gamma(L)$.
Similarly, from (\ref{eqn:Lconndef_open}), on the open orbit in the trivialization $\mathbf{1}_0^{U(1)}$ we have
\begin{equation}
-2i\nabla^{I_{s}}_{{\partial/\partial\bar z_{s}^{j}}}\rightarrow \nabla^\R_{{\partial/\partial
\theta^{j}}}=\frac{\partial}{\partial \theta^j}-ix^{j},\text{ as }s\rightarrow\infty. \label{limitconnopen}
\end{equation}

We will take the expressions on the right hand side of (\ref{limitconnvert})
and (\ref{limitconnopen}) to define a partial connection $\nabla^{\mathbb{R}}$
on $\Gamma(\bar L)'$, along $\mathcal{P}_{\mathbb{R}},$ which will be used to define the
quantization in this polarization.
Let
\begin{equation}\label{imaculada}
\mathcal{B}_{\mathbb{R}}^{Q}=\ker\nabla^{\mathbb{R}}=
\bigcap_{j=1}^n \ker \nabla^\R_{\partial/\partial \theta^j}
\subset\Gamma(\bar L)^{\prime}.
\end{equation}

\begin{remark}
We note that additive term $\frac{i}{2}$ in the right hand side of
(\ref{limitconnvert}), corresponds to a
limiting Chern connection on $K_{I_s}$ which is flat on $U_0$ and singular along
$\cup_{i=1}^r D_i$. The addition of this singular connection to the prequantum connection is
at the core of our approach to the half-form quantization in the singular real toric polarization.
We will describe in more detail the singular behavior of the limiting connection at the end of this section.
\end{remark}

\begin{remark}
As explained in the previous section, we should think of $L$ as the tensor
product of the uncorrected bundle $\ell$ with the smooth bundle with $U(1)$-valued transition functions
$\widetilde{\sqrt{K}}^{U(1)},$ though in general these may not exist individually. On the
other hand, the geometric quantization associated to a polarization
$\mathcal{P}$ is supposed to be the space of $\mathcal{P}$-covariantly
constant sections of the tensor product of the uncorrected bundle with the
square root of the canonical bundle associated $\mathcal{P}$. For the real
polarization $\mathcal{P}_{\mathbb{R}}$, the sections of the associated
canonical bundle are $n$-forms of the form $a(x) dx^{1}\wedge\cdots\wedge dx^{n}$.
Of course, the $U(1)$-part of this is hidden in the sections of $L$, and what is
missing is the modulus of the square root of the canonical bundle associated
to $\mathcal{P}_{\mathbb{R}}.$

To put it another way, according to the standard procedures of geometric
quantization, we should actually define the quantization $\mathcal{H}%
_{\mathbb{R}}^{Q}$ to be sections of $L\otimes\sqrt{\left
\vert{K_{\mathcal{P}_{\mathbb{R}}}}\right\vert}$. Some care must
be taken to interpret exactly what is meant by $\left\vert K_{\mathcal{P}%
_{\mathbb{R}}}\right\vert $ (and hence what is meant by its square root). In
the next section, we will see that sections of $\sqrt{\left\vert
{K_{\mathcal{P}_{\mathbb{R}}}}\right\vert} $ can be thought of as maps
on the space of $n$-tuples of vector fields. Then,
$\sqrt{\left\vert{K_{\mathcal{P}_{\mathbb{R}}}}\right\vert}$ admits a
canonical section $dX:=dx^{1}\wedge\cdots\wedge dx^{n}$ on $U_{0}$, and $0$
otherwise. Hence, we should define $\mathcal{H}_{\mathbb{R}}^{Q}=\mathcal{B}_{\mathbb{R}}^{Q}\otimes
\sqrt{\left\vert dX\right\vert}$. Of course, at this point, such a change is merely
cosmetic. On the other hand, we will see in the next section that such
expressions arise naturally when studying the degenerations of the complex
structure on $X$ to the real polarization at the level of holomorphic sections.
\end{remark}

\begin{definition}\label{fmi}
The vector space of quantum states for the half-form corrected quantization
of $(X,\omega,L)$ in the toric polarization $\mathcal{P}_{\mathbb{R}}$ is
defined by
$$
\mathcal{H}_{\mathbb{R}}^{Q}=\mathcal{B}_{\mathbb{R}}^{Q} \otimes\sqrt{\left\vert dX\right\vert},
$$
where $\mathcal{B}_{\mathbb{R}}^{Q}$ was defined in (\ref{imaculada}).
\end{definition}

\begin{remark}
A natural Hilbert space structure in ${\cal H}^Q_\R$ will be introduced in Section \ref{sec:families},
via degeneration of K\"ahler quantizations of $(X,\omega,L,I)$.
\end{remark}

The open orbit $U_{0}$ carries a free $\mathbb{T}^{n}$-action which
lifts to $L_{U_{0}}$ via geometric quantization. This action is generated by
$$
\nabla_{\frac{\partial}{\partial \theta}} + ix.
$$
Then, the trivializing section $\mathbf{1}_{0}^{U(1)}$ is $\mathbb{T}^{n}$-invariant and
if $\tau\in\Gamma_{U_{0}}(L)$ is given by $\tau=\tau_{0}\mathbf{1}_{0}^{U(1)}$ for a smooth function
$\tau_{0}\in C^{\infty}(U_{0})$, we can decompose it into Fourier
modes with respect to the $\mathbb{T}^{n}$-action. Specifically,
\[
\tau_0({x,\theta})=\sum_{m\in \Z^n}e^{-im\cdot{\theta}}\hat{\tau}_{0,m}({x}),
\]
where $\hat{\tau}_{0,m}({x)=}\frac{1}{(2\pi)^{n}}\int_{\mathbb{T}^{n}
}e^{im\cdot{\theta}}\tau_0({x,\theta})d{\theta}$ is the $m$-th Fourier mode
of $\tau_0.$

For $m\in P_{L}\cap{\mathbb{Z}}^{n}$, let $\delta^{m}\in\Gamma(\bar L)^{\prime}$ be
the distributional section defined by
\begin{equation}\label{deltam}
\delta^{m} (\bar \tau) = \bar{\hat{\tau}}_{m}(m) = \frac{1}{(2\pi)^{n}}\int_{\mathbb{T}^{n}
}e^{im\cdot{\theta}}\bar\tau_0({m,\theta})d{\theta},
\end{equation}
for all $\tau\in\Gamma(L)$. We have

\begin{theorem}
\label{thm:realQ} The vector space $\mathcal{H}_{\mathbb{R}}^{Q}$ is the
finite-dimensional vector space generated by \newline
$\{\delta^{m}\otimes \sqrt{|dX|}\}_{m\in P_{X}\cap{\mathbb{Z}}^{n}}.$
\end{theorem}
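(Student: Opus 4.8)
The plan is to solve the covariant-constancy equations defining $\mathcal{B}_{\mathbb{R}}^{Q}$ explicitly on the open orbit $U_0$, and then analyze the matching/extension conditions across the vertex charts, which is where the "corrected Bohr--Sommerfeld" constraint $m\in P_X\cap\Z^n$ will appear. First I would work on $U_0$: writing a distributional section as $\sigma = \sigma_0\,\mathbf{1}_0^{U(1)}$ and Fourier-decomposing $\sigma_0 = \sum_{m\in\Z^n} e^{-im\cdot\theta}\hat\sigma_{0,m}(x)$ in the distributional sense, the equation $\nabla^\R_{\partial/\partial\theta^j}\sigma = 0$ with $\nabla^\R_{\partial/\partial\theta^j} = \partial/\partial\theta^j - ix^j$ (equation (\ref{limitconnopen})) decouples mode by mode into $(-im^j - ix^j)\hat\sigma_{0,m} = 0$, i.e. $(x^j - m^j)\hat\sigma_{0,m}(x) = 0$ for each $j$. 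The distributional solutions of $(x^j-m^j)f = 0$ supported appropriately are spanned by $\delta(x-m)$ (and its derivatives, but here the action-angle coordinate range and the requirement that the section be a genuine distributional section of $L$ — not of a jet bundle — rule out derivatives; this should be argued carefully but is standard). So on $U_0$ every element of $\mathcal{B}_{\mathbb{R}}^{Q}$ is a linear combination of the $\delta^m$ of (\ref{deltam}) with $m\in\check P_X\cap\Z^n$, and conversely each such $\delta^m$ manifestly satisfies the equation.

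Next I would handle the vertex charts to pin down \emph{which} lattice points $m$ actually give well-defined global distributional sections of $L$, and in particular whether boundary points of $P_X$ contribute. In the chart $U_v$ the connection is $\nabla^\R_{\partial/\partial\theta_v^j} = \partial/\partial\theta_v^j - ix_v^j + \tfrac{i}{2}$ (equation (\ref{limitconnvert})), so the same Fourier computation in the $\theta_v$ variables gives the condition $(x_v^j - m_v^j + \tfrac12)\,\hat\sigma_{v,m_v}(x_v) = 0$, forcing support at $x_v^j = m_v^j - \tfrac12$. Using the coordinate change $x_v = A_v x + \lambda_v$ and the relation $\lambda_v^L = \lambda_v - \tfrac12\mathbf 1$, a point of $P_X$ lying on a facet adjacent to $v$ (where some $x_v^j = 0$, equivalently $\ell_j(x)=0$) is mapped to the locus where the corresponding shifted integrality condition \emph{fails}, because $x_v^j = 0$ is not of the form $m_v^j - \tfrac12$ for integer $m_v^j$ — whereas interior lattice points of $P_X$ correspond exactly to $x_v^j - \tfrac12 \in \Z$ along the adjacent facet directions, i.e. to the $\tfrac12$-shift built into the definition of the corrected connection. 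This is the mechanism by which the half-form-corrected connection, being singular (curvature with delta-contributions along $\cup_i D_i$, as noted in the remark following (\ref{imaculada})), excludes sections supported over $\partial P_X$. Thus the global distributional solutions are precisely the $\delta^m$ with $m$ ranging over $P_X\cap\Z^n$ — and since $P_X\cap\Z^n = P_L\cap\Z^n = \check P_X\cap\Z^n$ by the discussion in Section \ref{corrq} and Theorem \ref{cor:holsections}, this matches the asserted index set.

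Finally I would tensor with the canonical $\sqrt{|dX|}$ factor as in Definition \ref{fmi}: since $dX = dx^1\wedge\cdots\wedge dx^n$ is nonvanishing on $U_0$ and the $\delta^m$ are supported in $\check P_X$, the sections $\delta^m\otimes\sqrt{|dX|}$ are well-defined and span $\mathcal{H}_{\mathbb{R}}^{Q}$; linear independence is immediate from the disjoint supports (distinct fibers $\mu^{-1}(m)$), and finite-dimensionality follows since $P_X$ is compact so $P_X\cap\Z^n$ is finite.

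\textbf{Main obstacle.} The delicate point is the distributional uniqueness step: showing that the only distributional sections killed by $\nabla^\R$ are the delta sections and not, say, derivatives of deltas or distributions with support meeting $\partial P_X$. This requires being precise about (i) the function space $\Gamma(\bar L)'$ and the duality pairing set up in Subsection \ref{subsec:distsect}, (ii) the fact that a distributional section of the \emph{line bundle} $L$ (as opposed to a higher-order current) together with the first-order equations $(x^j-m^j)f=0$ in all $n$ directions forces $f$ to be a multiple of $\delta(x-m)$, and (iii) the compatibility of the open-orbit description with the vertex-chart description near the boundary, where the coordinates degenerate. Everything else is bookkeeping with the transition functions (\ref{eqn:LU(1)transfns}) and the coordinate changes (\ref{eqn:v-coord change}).
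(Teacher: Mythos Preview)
Your overall strategy matches the paper's: solve $\nabla^{\mathbb R}\sigma=0$ on the open orbit by Fourier decomposition in $\theta$, identify the $\delta^m$ as the only solutions there, and then rule out extra solutions supported on $\mu^{-1}(\partial P_X)$. Two points deserve correction.

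First, a small one: the reason derivatives of $\delta$ are excluded on $U_0$ is \emph{not} that ``the section must be a genuine distributional section of $L$ rather than of a jet bundle''---derivatives of $\delta$ are perfectly good distributional sections. The correct reason is simply that $(x^j-m^j)\partial_x^\alpha\delta(x-m)\neq 0$ unless $\alpha=0$, so the equations $(x^j-m^j)\hat\sigma_{0,m}=0$ already force $\hat\sigma_{0,m}=c_m\delta(x-m)$. (There is also a sign slip in your Fourier convention: with $e^{-im\cdot\theta}$ you get $(x^j+m^j)$, not $(x^j-m^j)$.)

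Second, and this is the real gap: your boundary argument Fourier-decomposes in $\theta_v$ on $U_v$ and reads off ``$(x_v^j-m_v^j+\tfrac12)\hat\sigma_{v,m_v}(x_v)=0$, forcing support at $x_v^j=m_v^j-\tfrac12$''. But the action--angle coordinates $(x_v^j,\theta_v^j)$ are \emph{singular} exactly on the facet $x_v^j=0$ (the $\theta_v^j$-circle collapses), so a distribution supported there cannot be written as $e^{im_v^j\theta_v^j}\hat\sigma(x_v)$ and your 1D argument does not apply. This is precisely the issue you flag in (iii), and it is not bookkeeping. The paper resolves it by passing to nonsingular Cartesian-type coordinates $(u,v)$ with $x_v^j=u^2+v^2$ near the facet, in which $\nabla^{\mathbb R}_{\partial/\partial\theta_v^j}=-i(-v\partial_u+u\partial_v)+\tfrac{i}{2}$, and then checking directly that no finite combination $\sum_{k,l}\alpha_{kl}\,\delta^{(k)}(u)\delta^{(l)}(v)$ lies in the kernel (using $x\delta^{(k)}(x)=-k\delta^{(k-1)}(x)$ and testing against polynomial sections). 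The $\tfrac{i}{2}$ term---your ``$\tfrac12$ shift''---is indeed exactly what makes this work, so your intuition is right; but the computation must be done in the $(u,v)$ chart, not in the degenerate $(x_v,\theta_v)$ chart.
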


Therefore, quantization in the real polarization is also given by the integral
points in the interior of the moment polytope.
Recall, from Section \ref{corrq} and Remarks \ref{rmk:CP1} and \ref{rmk:CP2},
that $P_L\subset P_X$ is obtained from $P_X$ by a one-half shift along the inward
pointing normals to the facets of $P_X$.
In Section \ref{sec:families},
these distributional sections will be described as coming from holomorphic
sections through degeneration of the complex structure.

\bigskip

\begin{proof}
[Proof of Theorem \ref{thm:realQ}]By acting with $\nabla^{\mathbb{R}}_{{\partial}/{\partial \theta^j}},
j=1,\dots,n$, in (\ref{limitconnopen}) on the distributions
$\delta^{m}$ in (\ref{deltam}) we conclude that they belong to ${\rm ker}\,\nabla^\R$. Moreover, any element of the
kernel can be restricted to the open orbit, by restricting it to sections of
$\bar L$ with compact support contained in the open orbit. From, Proposition 3.1 in
\cite{Baier-Florentino-Mourao-Nunes}, we see that such restrictions can
only have support along $\mu^{-1}(\check{P}_{X}\cap \Z^n)$, as these are the only fibers
along which $\nabla^\R$ has trivial holonomy, and one easily
verifies that $\delta^{m}$ is the unique (up to a constant) solution supported
on $\mu^{-1}(m)$.

It remains to be shown that there are no more elements in the kernel of
$\nabla^{\mathbb{R}}$. All we need to show is that there are no solutions with
support along $\mu^{-1}(\partial P_{X}).$ Let us consider a solution with
support along $\mu^{-1}(x_{v}^{j}=0)$, for some fixed $j=1,\dots,n$.
Let $\check x_{v}= (x_{v}^{1},\dots,x_{v}^{j-1},x_{v}^{j+1},\dots,x_{v}^{n})$ and
$\check \theta_v =(\theta_v^1,\dots,\theta_v^{j-1},\theta_v^{j+1},\dots,\theta_v^n)$.
In a neighborhood of the preimage by $\mu$ of the interior of the facet $x_{v}^{j}=0$ of $P_X$,
we can take coordinates $(u,v,\check x_v,\check \theta_v)$ (see, for example
\cite{DP09}, \cite{Baier-Florentino-Mourao-Nunes}), so that
$x_v^j =0 \Leftrightarrow (u,v)=(0,0)$ and
$$
\nabla^\R_{\frac{\partial}{\partial \theta_v^j}} = -i\left(-v\frac{\partial}{\partial u}+u\frac{\partial}{\partial v}\right)
+\frac{i}{2},
$$
in that neighborhood. A solution with support along the facet will be of the form,
$$
\sigma = \sum_{k,l=0}^\infty \alpha_{kl}(\check x_v, \check \theta_v) \delta^{(k)}(u)\delta^{(l)}(v)
\mathbf{1}_v^{U(1)},
$$
where only a finite number of terms
in the sum can be nonzero and where $\delta^{(k)}$ denotes the order-$k$
derivative of the Dirac $\delta$ distribution. (See
Theorem 2.3.5 of \cite{Hormander}.) Using
$x\delta^{(k)}(x)=-k\delta^{(k-1)}(x)$ and polynomial test sections of the form
$u^kv^l\chi \mathbf{1}_v^{U(1)}$, where $\chi$ is a cutoff
function which is constant and equal to 1 in the neighborhood, t
he condition
$\nabla^\R_{\frac{\partial}{\partial \theta_v^j}}\sigma =0$ then implies that such a distributional section
is zero.
Therefore, no nonzero solutions
of this form exist.
\end{proof}

\bigskip

As we will see in the next section, there is complete agreement between the direct approach to
half-form corrected quantization in the real polarization and the approach
based on degeneration of holomorphic sections. We note that the
partial connection $\nabla^{\mathbb{R}}$ \textquotedblleft
remembers\textquotedblright\ the degeneration procedure due to the
contribution of the Ricci-curvature term.

It is possible to gain some more geometric intuition about the fact that
the boundary of $P_X$ does not contribute to the kernel of
$\nabla^{\mathbb{R}}$, unlike what happens without the half-form correction \cite{Baier-Florentino-Mourao-Nunes}.
As $s\rightarrow\infty$, the piece of the connection
$\nabla^{I_s}$ coming from the Levi--Civita connection on the canonical bundle
of $X$ develops curvature singularities outside of the open orbit $U_{0}.$
This is behind the fact, shown above, that for the real polarization the half-form
correction forbids solutions of the covariant constancy equations supported on $\mu^{-1}(\partial P_{X})$.
For completeness, let us describe these curvature
singularities in more detail.

Recall Lemma \ref{lemma:canonicalconnections}, which gives explicit
expressions for the Chern connection $1$-forms on $K^{U(1)}$ (induced from the
Chern connection on $K$) in the vertex atlas. The proof of the following Proposition is immediate.
\begin{proposition}
$$
\lim_{s\to\infty} \Theta^{K_{I_s}}_v = \sum_{j=1}^n d\theta_v^j,
$$
in the sense that for any closed curve $C$, with  $C\subset U_v\setminus {\mu^{-1}(\partial P_X)}=U_0\cong
(\C^*)^n$, the holonomy of the singular connection, along $C$ depends only on the homotopy class
of $C$ in $U_0$ and,
$$
\lim_{s\to\infty}
\oint_C \Theta^{K_{I_s}}_v = {i} \oint_C \sum_{j=1}^n d\theta_v^j.
$$
\end{proposition}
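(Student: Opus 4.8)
The plan is to read the limit off the explicit vertex formula of Lemma~\ref{lemma:canonicalconnections} and to make the error term vanish using the special structure of the degenerating family $g_s = g_{P_X}+\varphi+s\psi$. Recall that in the vertex chart $U_v$ one has, with $G_{s,v}:=\operatorname{Hess}_{x_v}g_s$,
$$
\Theta_v^{K_{I_s}} = i\sum_{k=1}^n d\theta_v^k + \frac{i}{2}\left(\frac{\partial}{\partial x_v}\log\det G_{s,v}\right)\cdot G_{s,v}^{-1}\,d\theta_v .
$$
Writing $G_{0,v}:=\operatorname{Hess}_{x_v}(g_{P_X}+\varphi)$ and $H_v:=\operatorname{Hess}_{x_v}\psi$, we have $G_{s,v}=G_{0,v}+sH_v$, and the first summand above is already the asserted limit. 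So the whole statement comes down to showing that the second summand goes to zero in a sense strong enough to survive integration over a loop in $U_0$.

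First I would isolate and estimate that error term. Since $\psi$ is strictly convex on a neighborhood of $P_X$, the matrix $H_v$ is positive-definite there, so on any compact subset $K\subset U_0$ one may write $G_{s,v}=sH_v\bigl(I+\tfrac1s H_v^{-1}G_{0,v}\bigr)$ and deduce, uniformly on $K$ and for $s$ large, that $G_{s,v}^{-1}=\tfrac1s H_v^{-1}+O(s^{-2})$ and $\log\det G_{s,v}=n\log s+\log\det H_v+O(s^{-1})$, hence $\partial_{x_v}\log\det G_{s,v}=\partial_{x_v}\log\det H_v+O(s^{-1})$ remains bounded. Multiplying, the correction $1$-form $\tfrac{i}{2}(\partial_{x_v}\log\det G_{s,v})\cdot G_{s,v}^{-1}\,d\theta_v$ is $O(s^{-1})$ uniformly on $K$, so $\Theta_v^{K_{I_s}}\to i\sum_k d\theta_v^k$ uniformly on compact subsets of $U_0$ as $s\to\infty$.

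Next I would pass to loops. A closed curve $C\subset U_0$ has compact image, so uniform convergence there gives at once $\oint_C\Theta_v^{K_{I_s}}\to i\oint_C\sum_k d\theta_v^k$. To conclude I would note that $\sum_k d\theta_v^k$ is a closed $1$-form on $U_0\cong(\C^*)^n$ whose period around the loop winding once in the $l$-th toric factor is $2\pi$; hence $\oint_C\sum_k d\theta_v^k=2\pi(n_1+\cdots+n_n)$, where $n_l$ is the winding number of $C$ about the $l$-th factor, and this, together with the holonomy $\exp\bigl(-i\oint_C\sum_k d\theta_v^k\bigr)$ of the limiting singular connection, depends only on the free homotopy class of $C$ in $U_0$.

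The one point that is not entirely automatic — and it is exactly why $C$ must be kept away from $\mu^{-1}(\partial P_X)$ — is the interchange of the limit $s\to\infty$ with the line integral. For each finite $s$ the form $\Theta_v^{K_{I_s}}$ is smooth on all of $U_v$ (the apparent singularity of the $d\theta_v^k$ in the formula is cancelled by the behavior of $G_{s,v}$ near the facets, as in the Remark following~(\ref{eqn:Lconndef_vert})), whereas the limit form $i\sum_k d\theta_v^k$ genuinely blows up along the facets $\{x_v^k=0\}$. The convergence is therefore only uniform on compact subsets of $U_0$; the positive-definiteness of $H_v$ near $P_X$ (strict convexity of $\psi$) is precisely what supplies this uniform control of $G_{s,v}^{-1}$ and $\partial_{x_v}\log\det G_{s,v}$, and since $C$ is compact in $U_0$ that is all that is needed. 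Beyond this remark the proof is, as stated, immediate.
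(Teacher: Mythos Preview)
Your argument is correct and is exactly the approach the paper intends: the text simply says ``The proof of the following Proposition is immediate'' after recalling the explicit vertex formula of Lemma~\ref{lemma:canonicalconnections}, and you have filled in precisely those details --- the $O(s^{-1})$ decay of the correction term $\tfrac{i}{2}(\partial_{x_v}\log\det G_{s,v})\cdot G_{s,v}^{-1}\,d\theta_v$ on compacta of $U_0$, and the passage to loop integrals. Your remark on why $C$ must avoid $\mu^{-1}(\partial P_X)$ is also to the point and matches the paper's subsequent discussion of the curvature singularity.
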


Therefore, in the limit $s\to\infty$, we obtain a singular connection on $K^{U(1)}$, flat on
$\mu^{-1}(\check P_X)$, with curvature supported on $\mu^{-1}(\partial P_X) = \cup_{i=1}^r D_i$ and
with nonvanishing monodromies around the toric invariant divisors.
In the vertex chart $U_v$ the curvature, in the limit $s\to\infty$, is given by
the following current
$$
2\pi i \sum_{j=1}^n \delta(a_v^j)\delta(b_v^j) da_v^j\wedge db_v^j,
$$
where $(a_v^1,b_v^1,\dots,a_v^n,b_v^n)$, with
$a_v^j=\sqrt{x_v^j}\,\cos \theta_v^j, b_v^j= \sqrt{x_v^j}\,\sin \theta_v^j$, $i=1,\dots,n$,
are coordinates on $U_v$.

\subsection{Degeneration to the real polarization\label{sec:families}}

In this section, we will obtain the degeneration, as $s\to\infty$, of the (appropriately $L^2$-normalized) elements $\hat \sigma^m_s$
of the orthogonal basis of
${\cal H}^Q_{I_s}$, defined in Theorem \ref{cor:holsections}, to the same
distributional sections $\delta^m\otimes \sqrt{|dX|}\in {\cal H}^Q_\R$ obtained in the previous section, see Definition \ref{fmi},
(\ref{deltam}) and Theorem \ref{thm:realQ}. In particular, this will allow us to define a natural inner product in ${\cal H}^Q_\R.$
We will first study the degeneration of the basis elements
$\sigma^m_s$ of ${\cal B}^Q_{I_s}$ and then the degeneration of the sections $\mu_{I_s}$ of $\sqrt{|K_{I_s}|}$.

As the complex structure $I_{s}$ varies with $s$, we can regard the spaces ${\cal{B}}^Q_{I_s}$ of
$I_s$-holomorphic sections, described in Definition \ref{qhs},
as finite-dimensional subspaces of the fixed infinite-dimensional space of distributional
sections of $L$ (see Section \ref{subsec:distsect}). That is, for all $s$,
\[
{\cal{B}}^Q_{I_s}\subset\Gamma(L)\subset \Gamma(\bar L)'.
\]
We will study (weak) convergence of $I_s$-holomorphic sections in $\Gamma(\bar L)'$, as $s\to \infty$.
Denote the $I_s$-holomorphic section of $L$ associated to $m\in P_{L}\cap \Z^n$ by $\sigma_{s}^{m}$, as
in Theorem \ref{cor:holsections}.
Suppose $\tau\in\Gamma(L)$ is given
locally by $\{\tau_{0}\mathbf{1}_{0}^{U(1)},\tau_{v}\mathbf{1}_{v}^{U(1)}:v\in V\}$
and let $m\in P_{L}\cap\Z^n.$ Then since $\check X$ and the
holomorphic vertex charts $U_{v}$ are dense in $X$, we see from Theorem
\ref{cor:holsections} that
\begin{align}
i(\sigma_{s}^{m})(\bar{\tau})  &  =\frac{1}{(2\pi)^{n}}\int_{\check X}%
w^{m}e^{-h_{0}^{I_{s}}}\left\Vert dZ\right\Vert _{K}^{1/2}\bar{\tau}%
_{0}\,\frac{\omega^{n}}{n!}\label{eqn:sigmamdist}\\
&  =\frac{1}{(2\pi)^{n}}\int_{U_{v}}w_{v}^{\ell_{v}(m)\cdot{y}_{s_{v}}
-h_{v}^{I_{s}} +\mathbf{1}/2\cdot{y}_{s_{v}}}|\det A_v|^{\frac12}  \left\Vert dZ_{v}\right\Vert
_{K}^{1/2}\bar{\tau}_{v}\,\frac{\omega^{n}}{n!},\nonumber
\end{align}
where ${y}_{s_{v}}=\partial g_{s}/\partial{x}_{v}$.

The next lemma, which we recall from \cite{Baier-Florentino-Mourao-Nunes},
will allow us to use Laplace's approximation to compute the
asymptotics that we are interested in.
\begin{lemma}
\cite[Lemma 5.1]{Baier-Florentino-Mourao-Nunes}\label{lemma:fm} For $m\in P_{X}$
and any smooth function $\psi$ which is strictly
convex on a neighborhood of $P_{X},$ let
\[
f_{m}:=({x-}m)\cdot\partial\psi/\partial{x}-\psi.
\]
Then $f_{m}$ has a minimum value of $-\psi(m)$ on $P$ which is obtained at the
unique point ${x}=m.$ Moreover,
\[
\left(  \operatorname{Hess}f_{m}\right)  (m)=\left(  \operatorname{Hess}%
\psi\right)  (m).
\]

\end{lemma}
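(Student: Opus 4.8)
The plan is to compute $\nabla f_m$ and $\operatorname{Hess} f_m$ directly in the coordinates $x$, and then to treat as a separate point the passage from ``$x=m$ is the unique critical point'' to ``$x=m$ is the \emph{global} minimum on $P_X$'' — this last point being the only one that does not follow purely formally, since $f_m$ itself need not be convex.

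First I would differentiate $f_m(x)=(x-m)\cdot\partial\psi/\partial x-\psi$. Writing $\partial_i=\partial/\partial x^i$ and using the summation convention,
\[
\partial_i f_m=\partial_i\big[(x^j-m^j)\partial_j\psi-\psi\big]=\partial_i\psi+(x^j-m^j)\partial_i\partial_j\psi-\partial_i\psi=\big(\operatorname{Hess}\psi(x)\,(x-m)\big)_i,
\]
so the first-order terms cancel and $\nabla f_m(x)=\operatorname{Hess}\psi(x)\,(x-m)$. Since $\psi$ is strictly convex on a neighborhood of $P_X$, $\operatorname{Hess}\psi(x)$ is positive definite, hence invertible, there; therefore $\nabla f_m(x)=0$ iff $x=m$, which gives uniqueness of the critical point, and $f_m(m)=(m-m)\cdot\nabla\psi(m)-\psi(m)=-\psi(m)$. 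Differentiating once more, $\partial_j\partial_i f_m=\partial_i\partial_j\psi+(x^k-m^k)\partial_i\partial_j\partial_k\psi$; at $x=m$ the second term vanishes, so $\operatorname{Hess} f_m(m)=\operatorname{Hess}\psi(m)$, which is the ``Moreover'' assertion.

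It then remains to show that the unique critical point $x=m$ is the global minimum of $f_m$ on $P_X$. I would argue along line segments. For $x_0\in P_X$ with $x_0\neq m$, set $\gamma(t)=m+t(x_0-m)$, which stays in $P_X$ by convexity of the polytope (hence in the neighborhood where $\psi$ is strictly convex), and put $\phi(t)=f_m(\gamma(t))$. Using $\nabla f_m(\gamma(t))=t\,\operatorname{Hess}\psi(\gamma(t))\,(x_0-m)$,
\[
\phi'(t)=\nabla f_m(\gamma(t))\cdot(x_0-m)=t\,(x_0-m)\cdot\operatorname{Hess}\psi(\gamma(t))\,(x_0-m)>0\quad\text{for }t\in(0,1],
\]
with $\phi'(0)=0$. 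Hence $\phi$ is strictly increasing on $[0,1]$, so $f_m(x_0)=\phi(1)>\phi(0)=-\psi(m)$, and $f_m\geq-\psi(m)$ on $P_X$ with equality only at $x=m$.

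I expect the only genuine subtlety to be exactly this last step: one must not invoke convexity of $f_m$, which fails in general because of the extra third-derivative term in $\operatorname{Hess} f_m$ computed above. An alternative route, which I would mention but not develop, is to change variables to $y=\nabla\psi(x)$ — a diffeomorphism by strict convexity — under which $f_m$ becomes $\tilde\psi(y)-m\cdot y$ with $\tilde\psi(y)=x\cdot y-\psi$ the (now genuinely convex) Legendre dual; the minimum value $-\psi(m)$ and the identity $\operatorname{Hess} f_m(m)=\operatorname{Hess}\psi(m)$ then follow from standard Legendre-duality relations together with the chain rule at the critical point. Either route works; I would present the direct computation as the cleaner one.
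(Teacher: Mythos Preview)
Your proof is correct. The paper itself does not give a proof of this lemma: it is quoted verbatim from \cite[Lemma 5.1]{Baier-Florentino-Mourao-Nunes} and is only used, not re-proved, in the present paper. So there is nothing in this paper to compare your argument against; your direct computation of $\nabla f_m$ and $\operatorname{Hess} f_m$, together with the line-segment argument to upgrade the unique critical point to a global minimum on the convex set $P_X$, is a clean and complete proof.
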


\begin{remark}
It is important to observe that the function $f_{m}$ in Lemma \ref{lemma:fm}
has a unique minimum on the entire polytope $P_X$, not just the interior, which
implies that the leading order asymptotics that we will be interested in all
arise from the behavior of the integrand at ${x}=m.$
\end{remark}

Recall,

\begin{lemma}
\label{lemma:Laplace}(Laplace's Approximation) Suppose a function $f\in
C^{2}(R)$ on the closed region $R\subset\mathbb{R}^{n}$ has a unique nondegenerate
minimum the unique point ${x}_{0}\in\check{R}$ in the interior of $R$; so in
particular, $\operatorname{Hess}_{{x}_{0}}f$ is positive definite. Then if
$g_{s}$ is a continuous function on $R$ such that $g_{s}\sim s^{r}%
g_{0}+O(s^{r-1}),~s\rightarrow\infty$, we have
\[
\int_{R}e^{-sf}g_{s}\,d{x}\sim\left(  \frac{2\pi}{s}\right)  ^{n/2}%
\frac{e^{-sf({x}_{0})}s^{r}g_{0}({x}_{0})}{\sqrt{\det\left(
\operatorname{Hess}f\right)  ({x}_{0})}},~s\rightarrow\infty.
\]

\end{lemma}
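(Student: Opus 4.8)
The plan is to prove Laplace's approximation by the classical localization-and-rescaling argument. First I would normalize: setting $\tilde f := f - f(x_0)$ and translating coordinates so that $x_0 = 0$, we may assume $f(0) = 0$, $f \ge 0$ on a neighborhood of $0$, and $H := \operatorname{Hess}_0 f$ positive definite, at the expense of an overall factor $e^{-s f(x_0)}$ pulled out front. Since $f$ attains its minimum on $R$ only at the interior point $0$, for every sufficiently small $\delta > 0$ with $\overline{B_\delta(0)} \subset \check R$ there is $c_\delta > 0$ so that $f \ge c_\delta$ on $R \setminus B_\delta(0)$; hence
\[
\Bigl| \int_{R \setminus B_\delta(0)} e^{-sf} g_s \, dx \Bigr| \le \|g_s\|_{L^\infty(R)} \operatorname{vol}(R) \, e^{-s c_\delta} = O\bigl(s^r e^{-s c_\delta}\bigr),
\]
(using $\operatorname{vol}(R) < \infty$, as holds in all applications), which decays faster than any power of $s$ and is therefore negligible next to the claimed leading term of order $s^{r-n/2}$. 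So it remains to analyze $\int_{B_\delta(0)} e^{-sf} g_s \, dx$.

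On $B_\delta(0)$, Taylor's theorem gives $f(x) = \tfrac12 x^{T} H x + E(x)$ with $|E(x)| \le C |x|^{3}$, and after shrinking $\delta$ we may assume $f(x) \ge \tfrac14 x^{T} H x$ there. I would then substitute $x = u/\sqrt{s}$, which turns the integral into
\[
s^{-n/2} \int_{B_{\delta\sqrt{s}}(0)} e^{-s f(u/\sqrt{s})} \, g_s(u/\sqrt{s}) \, du .
\]
For each fixed $u$ one has, as $s \to \infty$, that $s f(u/\sqrt s) = \tfrac12 u^{T} H u + s E(u/\sqrt s) \to \tfrac12 u^{T} H u$ because $|s E(u/\sqrt s)| \le C|u|^{3} s^{-1/2}$, and $s^{-r} g_s(u/\sqrt s) \to g_0(0)$ by the hypothesis $g_s = s^r g_0 + O(s^{r-1})$ together with continuity of $g_0$. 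Extending the integrand by zero outside $B_{\delta\sqrt s}(0)$, it is dominated, after factoring out $s^r$, by $C e^{-\frac14 u^{T} H u}$, which is integrable on $\R^n$: indeed $e^{-s f(u/\sqrt s)} \le e^{-\frac14 u^{T} H u}$ on the region of integration, and $s^{-r}|g_s| \le C$ for $s$ large.

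By dominated convergence it then follows that
\[
\int_{B_\delta(0)} e^{-sf} g_s \, dx \sim s^{r - n/2} \, g_0(0) \int_{\R^n} e^{-\frac12 u^{T} H u} \, du = s^{r-n/2} \, g_0(0) \, \frac{(2\pi)^{n/2}}{\sqrt{\det H}} ,
\]
using the Gaussian integral $\int_{\R^n} e^{-\frac12 u^T H u}\, du = (2\pi)^{n/2}(\det H)^{-1/2}$. Reinstating the normalization --- the translation $x_0$, the prefactor $e^{-s f(x_0)}$, and $H = \operatorname{Hess}_{x_0} f$ --- reproduces the stated asymptotic. The one point demanding care is bookkeeping the error terms uniformly: $\delta$ must be fixed once and for all, independently of $s$, so that both the cubic Taylor bound and the estimate $f \ge \tfrac14 x^{T} H x$ hold throughout $B_\delta(0)$, and the $O(s^{r-1})$ control on $g_s$ must be (locally) uniform near $x_0$ for the dominated convergence step to apply; granting these, the remaining steps are routine, and one could equally appeal to any standard reference for Laplace's method.
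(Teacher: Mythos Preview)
The paper does not prove this lemma at all: it is introduced with ``Recall,'' and stated without proof as the classical Laplace approximation, then simply applied in Lemmas~\ref{lemma:L2asymp} and Theorem~\ref{thm:sec_asymps}. Your proposal therefore goes strictly beyond what the paper does; it supplies the standard localization--rescaling proof of Laplace's method, which is correct in outline and in its main estimates.

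One small point of bookkeeping: you invoke a cubic Taylor remainder $|E(x)|\le C|x|^{3}$, which needs $f\in C^{3}$ (or at least $C^{2,1}$), whereas the statement only assumes $f\in C^{2}(R)$. Under the $C^{2}$ hypothesis one has only $E(x)=o(|x|^{2})$, but this is enough: for fixed $u$ one still gets $sE(u/\sqrt{s})\to 0$, and the lower bound $f(x)\ge \tfrac14 x^{T}Hx$ on a small ball follows from $E(x)=o(|x|^{2})$ just as well, so the dominated convergence step is unaffected. With that adjustment (and your own caveats on finite volume of $R$ and local uniformity of the $O(s^{r-1})$ bound, both of which hold in the paper's applications on the compact polytope $P_X$), the argument is complete.
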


\begin{lemma}
\label{lemma:L2asymp}As $s\rightarrow\infty$, the leading order asymptotic
value of the $L^{2}$-norm of the family of sections $\sigma_{s}^{m}%
,s\in\mathbb{R}_{>0}$ is%
\[
\left\Vert \sigma_{s}^{m}\right\Vert _{L^{2}}^{2}\sim\pi^{n/2}e^{2g_{s}(m)}.
\]

\end{lemma}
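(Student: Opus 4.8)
The plan is to compute $\|\sigma_s^m\|_{L^2}^2$ as an integral over the moment polytope and then extract its large-$s$ asymptotics via Laplace's approximation, Lemma~\ref{lemma:Laplace}. First I would restrict everything to the dense open orbit $U_0$, where by Theorem~\ref{cor:holsections} together with Lemma~\ref{lemma:normdz} (so that $\|dZ\|_{K_{I_s}}^{1/2}=(\det G_s)^{1/4}$) one has, on $U_0$, $\sigma_s^m=w^m e^{-h_0^{I_s}}(\det G_s)^{1/4}\,\mathbf{1}_0^{U(1)}$, with $G_s=\operatorname{Hess}_x g_s$ and $y_s:=\partial g_s/\partial x$. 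Using $w=e^{y_s+i\theta}$, so that $|w^m|^2=e^{2m\cdot y_s}$, and $h_0^{I_s}=x\cdot y_s-g_s$, the pointwise squared norm $h^L(\sigma_s^m,\sigma_s^m)=|w^m|^2e^{-2h_0^{I_s}}(\det G_s)^{1/2}=e^{2(m-x)\cdot y_s+2g_s}(\det G_s)^{1/2}$ is independent of $\theta$ (the frame $\mathbf{1}_0^{U(1)}$ having unit $h^L$-norm). Since $\omega=\sum_j dx^j\wedge d\theta^j$ on $U_0$, integration over $\mathbb{T}^n$ in \eqref{innerproduct} cancels the factor $(2\pi)^{-n}$ and leaves
\[
\|\sigma_s^m\|_{L^2}^2=\int_{\check P_X}e^{2(m-x)\cdot y_s(x)+2g_s(x)}\,(\det G_s(x))^{1/2}\,dx .
\]

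Next I would isolate the $s$-dependence by writing $g_s=g_0+s\psi$ with $g_0:=g_{P_X}+\varphi$ and $y_0:=\partial g_0/\partial x$, so the exponent becomes $\big[2(m-x)\cdot y_0(x)+2g_0(x)\big]-2s\,f_m(x)$, where $f_m(x)=(x-m)\cdot\partial\psi/\partial x-\psi(x)$ is exactly the function of Lemma~\ref{lemma:fm}. That lemma gives that $f_m$ has a \emph{unique} minimum over all of $P_X$, attained at $x=m$, with $f_m(m)=-\psi(m)$ and $\operatorname{Hess}_x f_m(m)=\operatorname{Hess}_x\psi(m)$; moreover $m\in P_L\cap\mathbb{Z}^n$ satisfies $\ell_j(m)=\langle m,\nu_j\rangle+\lambda_j\ge\tfrac12>0$ for every $j$ (since $\lambda_j=\lambda_j^L+\tfrac12$ and $m\in P_L$), so $m$ is an interior nondegenerate minimum. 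Thus Lemma~\ref{lemma:Laplace} applies with $2f_m$ in the role of $f$ and $x_0=m$; and since $G_s=\operatorname{Hess}_x g_0+s\operatorname{Hess}_x\psi$, near $m$ the prefactor satisfies $(\det G_s(x))^{1/2}\sim s^{n/2}(\det\operatorname{Hess}_x\psi(x))^{1/2}$, so the growth exponent in Lemma~\ref{lemma:Laplace} is $r=n/2$ and the leading coefficient at $x=m$ is $e^{2g_0(m)}(\det\operatorname{Hess}_x\psi(m))^{1/2}$.

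Substituting into Lemma~\ref{lemma:Laplace},
\[
\|\sigma_s^m\|_{L^2}^2\ \sim\ \Big(\frac{2\pi}{s}\Big)^{n/2}\frac{e^{2s\psi(m)}\,s^{n/2}\,e^{2g_0(m)}(\det\operatorname{Hess}_x\psi(m))^{1/2}}{\sqrt{\det\big(2\operatorname{Hess}_x\psi(m)\big)}} .
\]
The powers of $s$ cancel, the factor $(\det\operatorname{Hess}_x\psi(m))^{1/2}$ cancels against $\sqrt{\det(2\operatorname{Hess}_x\psi(m))}=2^{n/2}(\det\operatorname{Hess}_x\psi(m))^{1/2}$, and $e^{2s\psi(m)}e^{2g_0(m)}=e^{2g_s(m)}$, so one is left with $(2\pi)^{n/2}2^{-n/2}e^{2g_s(m)}=\pi^{n/2}e^{2g_s(m)}$, which is the assertion.

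I do not expect a serious obstacle; the point worth emphasizing is the bookkeeping of powers of $s$ and of $\det\operatorname{Hess}_x\psi(m)$. It is precisely the half-form factor $(\det G_s)^{1/4}$ in $\sigma_s^m$, contributing $(\det G_s)^{1/2}\sim s^{n/2}(\det\operatorname{Hess}_x\psi)^{1/2}$, that simultaneously cancels the $s^{-n/2}$ coming from the width of the Gaussian and the $(\det\operatorname{Hess}_x\psi(m))^{-1/2}$ coming from its covariance, so that the leading constant is $s$-independent; without the correction one would pick up an extra factor $\sim s^{-n/2}(\det\operatorname{Hess}_x\psi(m))^{-1/2}$, which is exactly why the $L^1$-normalization was needed in \cite{Baier-Florentino-Mourao-Nunes}. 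The one genuinely technical step is to run Laplace's approximation despite $g_0=g_{P_X}+\varphi$ being singular along $\partial P_X$: I would apply Lemma~\ref{lemma:Laplace} on a small closed ball about $m$ contained in $\check P_X$, where the data are smooth and the stated asymptotics uniform, and separately bound the integral over $P_X$ minus that ball using that there $f_m\ge f_m(m)+c$ for some $c>0$ (by compactness and uniqueness of the minimum) while the integrand is $O(s^{n/2})$ times an $x$-integrable function --- since $\ell_j(m)\ge\tfrac12$ makes $e^{2(m-x)\cdot y_0}$ vanish like $\prod_j\ell_j(x)^{\ell_j(m)}$ near $\partial P_X$, more than offsetting the mild blow-up of $(\det G_s)^{1/2}$ --- so that contribution is $O\big(s^{n/2}e^{-2s(f_m(m)+c)}\big)$ and negligible against the main term.
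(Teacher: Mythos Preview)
Your proof is correct and follows essentially the same route as the paper: restrict to the open orbit, write the $L^2$-norm as $\int_{\check P_X}e^{-2sf_m}e^{-2((x-m)\cdot y_0-g_0)}(\det G_s)^{1/2}dx$, invoke Lemma~\ref{lemma:fm} to identify the unique interior minimum of $f_m$ at $x=m$, and apply Laplace's approximation with the leading term of $(\det G_s)^{1/2}\sim s^{n/2}(\det\operatorname{Hess}\psi)^{1/2}$ to get the stated asymptotics. The one small methodological difference is in how the singularity of $g_0$ along $\partial P_X$ is dealt with: the paper checks directly that the full integrand $e^{-2((x-m)\cdot y_0-g_0)}(\det G_s)^{1/2}$ extends continuously to $P_X$ (it behaves like $\prod_j\ell_j(x)^{\ell_j(m)-1/2}$ times smooth, and $\ell_j(m)\ge\tfrac12$ for $m\in P_L$), so Lemma~\ref{lemma:Laplace} applies on all of $P_X$ at once; you instead isolate a ball around $m$ and bound the tail by the gap $f_m\ge f_m(m)+c$ together with integrability near the boundary. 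Both arguments use the same boundary estimate and arrive at the same conclusion; the paper's version is a touch cleaner since continuity on $P_X$ lets one invoke Laplace globally without the split.
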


\begin{proof}
To compute the asymptotics we can restrict the integral to the open orbit,
where we have
\begin{align}
\left\Vert \sigma_{s}^{m}\right\Vert _{L^{2}}^{2}  &  =\frac{1}{(2\pi)^{n}%
}\int_{\check{P}_X\times\mathbb{T}^{n}}\left\vert w^{m}\right\vert
^{2}e^{-2h_{0}^{I_{s}}}\left\Vert dZ\right\Vert _{K}\,\frac{\omega^{n}}%
{n!}\nonumber\\
&  =\frac{1}{(2\pi)^{n}}\int_{\check{P}_X\times\mathbb{T}^{n}}e^{2m\cdot{y}%
_{s}-2({x}\cdot{y}_{s}-g_{s})}(\det G_{s})^{1/2}\,\frac{\omega^{n}}%
{n!}\nonumber\\
&  =\int_{\check{P}_X}e^{-2s(({x}-m)\cdot\partial\psi/\partial{x}-\psi
)}e^{-2(({x}-m)\cdot{y}_{0}-g_{0})}(\det G_{s})^{1/2}d{x} \label{eqn:normint},
\end{align}
where in the last line we used the fact that ${y}_{s}=\partial g_{s}%
/\partial{x=}\partial g_{0}/\partial{x}+s\partial\psi/\partial{x}$.
Note that $G_{s}=G_{0}+s\operatorname{Hess}\,\psi$ implies
\begin{equation}
\det G_{s}\sim s^{n}\det\operatorname{Hess}\,\psi+O(s^{n-1}%
).\label{eqn:detGslim}%
\end{equation}
We would like to apply Laplace's approximation to the integral
(\ref{eqn:normint}). By Lemma \ref{lemma:fm}, the first exponential has the
correct behavior. The only remaining subtlety is to show that the remainder of
the integrand is continuous on $P_X$, which is not immediate since $g_{s}$ is
singular along $\partial P_X$.
Using the explicit expression (\ref{eqn:gP}) and the regularity conditions
(\ref{eqn:greg}), we conclude that $e^{-2(({x}-m)\cdot{y}_{0}-g_{0})}(\det G_{s})^{1/2}$ behaves like
\[
\Pi_{i=1}^{r} \ell_{i}(x) ^{(\frac12 \ell_{i}(m)-\frac12)}%
\]
times a smooth function on $P_X$. Therefore, it is continuous and goes to zero
at the boundary of $P_X$ precisely when $m$ belongs to the corrected polytope
$P_{L}\subset P_{X}$.
From another point of view, the integrand in (\ref{eqn:normint}) is the
pointwise norm of the $\mathbb{T}^{n}$-invariant holomorphic section
$\sigma_{s}^{m}$, which is necessarily continuous on $P_X$.
Then, Laplace's approximation (Lemma \ref{lemma:Laplace}) yields
\[
\left\Vert \sigma_{s}^{m}\right\Vert _{L^{2}}^{2}\sim\left(  \frac{2\pi}%
{s}\right)  ^{n/2}\frac{e^{2s\psi(m)}e^{2g_{0}(m)}s^{n/2}\sqrt{\det
\operatorname{Hess}\psi(m)}}{\sqrt{2^{n}\det\operatorname{Hess}\psi(m)}}%
=\pi^{n/2}e^{2g_{s}(m)}%
\]
as desired.
\end{proof}

\bigskip

Recall that $G_{s}=G_{0}+s\,\operatorname{Hess}\psi$, where $G_{0}%
=\operatorname{Hess}(g_{P}+\varphi)$.

\begin{theorem}
\label{thm:sec_asymps}For each $\tau\in\Gamma(L)$, the leading order
asymptotic value of $i\left(  \sigma_{s}^{m}/\left\Vert \sigma_{s}%
^{m}\right\Vert _{L^{2}}\right)  $ on $\bar{\tau}$ as $s\rightarrow\infty$ is
determined by%
\[
i\left(  \frac{\sigma_{s}^{m}(\det G_{s})^{1/4}}{\left\Vert \sigma_{s}%
^{m}\right\Vert _{L^{2}}}\right)  (\bar{\tau})\sim2^{n/2}\pi^{n/4}\hat
{\bar{\tau}}_{0,m}(m)\text{.}%
\]
That is, in terms of the distributional sections $\delta^{m},~m\in P_{X}\cap\Z^n$,
described in the Section \ref{subsec:realQuant}, formula (\ref{deltam}),
\[
\lim_{s\rightarrow\infty}  \frac{\sigma_{s}^{m}(\det G_{s})^{1/4}%
}{\left\Vert \sigma_{s}^{m}\right\Vert _{L^{2}}}  =2^{n/2}\pi
^{n/4}\delta^{m}.
\]
\end{theorem}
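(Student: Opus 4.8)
The plan is to test the $L^{2}$-normalized section against an arbitrary $\tau\in\Gamma(L)$, reduce the resulting distributional pairing to a Laplace-type integral over $\check{P}_X$, read off the leading term, and conclude weak-$*$ convergence in $\Gamma(\bar L)'$. For the reduction: starting from the open-orbit formula in (\ref{eqn:sigmamdist}) and using $\|dZ\|_{K_{I_s}}^{1/2}=(\det G_s)^{1/4}$ (Lemma \ref{lemma:normdz}), multiplication of the section by the real scalar $(\det G_s)^{1/4}$ gives
\[
i\left(\sigma_s^m(\det G_s)^{1/4}\right)(\bar\tau)=\frac{1}{(2\pi)^n}\int_{\check{P}_X\times\mathbb{T}^{n}} w^m e^{-h_0^{I_s}}(\det G_s)^{1/2}\,\bar\tau_0\,\frac{\omega^{n}}{n!}.
\]
Using $w^m=e^{m\cdot y_s}e^{im\cdot\theta}$, $h_0^{I_s}=x\cdot y_s-g_s$ and $\omega^{n}/n!=dx\wedge d\theta$ on $U_0$, the integration over $\mathbb{T}^{n}$ extracts the Fourier mode $\hat{\bar\tau}_{0,m}(x)=\frac{1}{(2\pi)^{n}}\int_{\mathbb{T}^{n}}e^{im\cdot\theta}\bar\tau_0(x,\theta)\,d\theta$; inserting $y_s=y_0+s\,\partial\psi/\partial x$ and $g_s=g_0+s\psi$ one has $(x-m)\cdot y_s-g_s=\bigl((x-m)\cdot y_0-g_0\bigr)+s\,f_m(x)$ with $f_m$ as in Lemma \ref{lemma:fm}, so that
\[
i\left(\sigma_s^m(\det G_s)^{1/4}\right)(\bar\tau)=\int_{\check{P}_X} e^{-s f_m(x)}\,e^{-((x-m)\cdot y_0-g_0)}\,(\det G_s(x))^{1/2}\,\hat{\bar\tau}_{0,m}(x)\,dx.
\]

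Next I would apply Laplace's approximation, Lemma \ref{lemma:Laplace}. By Lemma \ref{lemma:fm}, $f_m$ has a unique, nondegenerate minimum at $x=m$, an \emph{interior} point of $P_X$ (the integral points of $P_L$ lie in $\check{P}_X$), with $f_m(m)=-\psi(m)$ and $\operatorname{Hess}f_m(m)=\operatorname{Hess}\psi(m)$; and by (\ref{eqn:detGslim}) the amplitude equals $s^{n/2}(\det\operatorname{Hess}\psi(x))^{1/2}e^{-((x-m)\cdot y_0-g_0)}\hat{\bar\tau}_{0,m}(x)+O(s^{n/2-1})$ near $m$, so the exponent $r$ in Lemma \ref{lemma:Laplace} is $n/2$. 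Since $\bigl((x-m)\cdot y_0-g_0\bigr)|_{x=m}=-g_0(m)$ and $g_0+s\psi=g_s$ on $P_X$, Lemma \ref{lemma:Laplace} yields
\[
i\left(\sigma_s^m(\det G_s)^{1/4}\right)(\bar\tau)\sim\left(\frac{2\pi}{s}\right)^{n/2}\frac{e^{s\psi(m)}\,s^{n/2}(\det\operatorname{Hess}\psi(m))^{1/2}\,e^{g_0(m)}\,\hat{\bar\tau}_{0,m}(m)}{(\det\operatorname{Hess}\psi(m))^{1/2}}=(2\pi)^{n/2}e^{g_s(m)}\hat{\bar\tau}_{0,m}(m).
\]
Dividing by $\|\sigma_s^m\|_{L^2}\sim\pi^{n/4}e^{g_s(m)}$ (Lemma \ref{lemma:L2asymp}) and simplifying $(2\pi)^{n/2}/\pi^{n/4}=2^{n/2}\pi^{n/4}$ gives the stated asymptotic, and $\hat{\bar\tau}_{0,m}(m)=\delta^m(\bar\tau)$ by (\ref{deltam}). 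When $\hat{\bar\tau}_{0,m}(m)=0$ the same argument (the amplitude now has vanishing leading coefficient at the critical point) shows the integral is of strictly lower order, so in all cases $i\bigl(\sigma_s^m(\det G_s)^{1/4}/\|\sigma_s^m\|_{L^2}\bigr)(\bar\tau)\to 2^{n/2}\pi^{n/4}\delta^m(\bar\tau)$. As $\tau\in\Gamma(L)$ was arbitrary, this is precisely the claimed convergence $\sigma_s^m(\det G_s)^{1/4}/\|\sigma_s^m\|_{L^2}\to 2^{n/2}\pi^{n/4}\delta^m$ in $\Gamma(\bar L)'$.

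The one point that I expect to need genuine care is the legitimacy of Laplace's method, since the amplitude $e^{-((x-m)\cdot y_0-g_0)}(\det G_s)^{1/2}\hat{\bar\tau}_{0,m}$ need not extend continuously to $\partial P_X$: near a facet $\{\ell_j=0\}$ it is $O\bigl(\prod_j\ell_j(x)^{\frac12\ell_j(m)-\frac12}\bigr)$ times a smooth factor, an integrable but (when $m$ lies on a facet of $P_L$) unbounded power, exactly the phenomenon handled in the proof of Lemma \ref{lemma:L2asymp}. I would deal with this by splitting $\check{P}_X$ into a small ball $B(m,\varepsilon)$ about the minimum --- on which $g_0,\psi,\tau$ are smooth, $\det G_s=s^{n}\det\operatorname{Hess}\psi\,(1+O(1/s))$ uniformly, and Lemma \ref{lemma:Laplace} applies verbatim --- and its complement, on which $f_m\geq-\psi(m)+c$ for some $c>0$ by uniqueness of the minimum, so the complementary contribution is $O\bigl(s^{n/2}e^{s(\psi(m)-c)}\bigr)$ and is exponentially subleading relative to $(2\pi)^{n/2}e^{g_s(m)}$. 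The bound on the complement uses that $\tfrac12\ell_j(m)-\tfrac12>-1$ for $m\in P_L$ (so the power singularity is in $L^{1}$) and that $|\hat{\bar\tau}_{0,m}|\le\sup_X|\tau|<\infty$, which holds because the pointwise norm of the smooth section $\tau$ is continuous on the compact $X$ and $|\mathbf{1}_0^{U(1)}|\equiv1$. As a consistency check one can redo the computation in a vertex chart via the second line of (\ref{eqn:sigmamdist}), where the half-integer exponents $\ell_v(m)$ together with the Jacobian factors $|\det A_v|^{1/2}$ must recombine to give the same limit.
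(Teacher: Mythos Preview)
Your argument is correct and follows essentially the same route as the paper: reduce to a Laplace integral over $\check P_X$ via the open-orbit expression (\ref{eqn:sigmamdist}), invoke Lemma \ref{lemma:fm} for the unique interior minimum, apply Lemma \ref{lemma:Laplace}, and divide by the asymptotic of Lemma \ref{lemma:L2asymp}. The only organizational difference is that the paper first computes the asymptotics of $i(\sigma_s^m)(\bar\tau)$ itself (so the amplitude carries a single factor $(\det G_s)^{1/4}$ and the Laplace exponent is $r=n/4$), and afterwards multiplies by $(\det G_s)^{1/4}/\|\sigma_s^m\|_{L^2}$, whereas you absorb the extra $(\det G_s)^{1/4}$ into the integrand from the outset, yielding $(\det G_s)^{1/2}$ and $r=n/2$; the end result is of course the same. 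Your explicit treatment of the boundary behavior (the $\ell_j^{\frac12\ell_j(m)-\frac12}$ singularity, integrable since $\ell_j(m)\ge\tfrac12$ for $m\in P_L$) and of the case $\hat{\bar\tau}_{0,m}(m)=0$ is more detailed than what the paper writes, which simply refers back to the analogous step in the proof of Lemma \ref{lemma:L2asymp}.
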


\begin{proof}
We will first compute the asymptotics of $i(\sigma_{s}^{m})(\bar{\tau})$, and
then simply divide by the results of the previous lemma to obtain the desired
expressions. To this end, compute first in the open orbit. From
(\ref{eqn:sigmamdist}) we have
\begin{align*}
i(\sigma_{s}^{m})(\bar{\tau})  &  =\frac{1}{(2\pi)^{n}}\int_{\check X}
w^{m}e^{-h_{0}^{I_{s}}}\left\Vert dZ\right\Vert _{K}^{1/2}\bar{\tau}_{0}\,
\frac{\omega^{n}}{n!}\\
&  =\int_{\check{P}_X}e^{m\cdot{y}_{s}-({x\cdot y}_{s}-g_{0}-s\psi)}(\det
G_{s})^{1/4} \left(\frac{1}{(2\pi)^{n}}  \int_{\mu^{-1}({x})}e^{im\cdot{\theta}}\bar{\tau}_{0}
d{\theta}\right)  d{x}\\
&  =\int_{\check{P}_X}e^{-s(({x}-m)\cdot\partial\psi/\partial{x}-\psi)}%
e^{-(({x}-m)\cdot{y}_{0}-g_{0})} (\det G_{s})^{1/4}\hat{\bar{\tau}}_{0,m}%
({x})\,d{x.}%
\end{align*}
Let $f_{m}:=({x}-m)\cdot\partial\psi/\partial{x}-\psi.$ Then by an argument similar to that in the proof of Lemma \ref{lemma:L2asymp} and by Lemma
\ref{lemma:fm}, we can use Laplace's approximation with equation
(\ref{eqn:detGslim}) to obtain as $s\rightarrow\infty$ that%
\begin{align*}
i(\sigma_{s}^{m})(\bar{\tau})  &  \sim\left(  \frac{2\pi}{s}\right)
^{n/2}\frac{e^{s\psi(m)+g_{0}(m)}s^{n/4}(\det\operatorname{Hess}\psi
(m))^{1/4}\hat{\bar{\tau}}_{0,m}(m)}{\sqrt{\det\operatorname{Hess}\psi(m)}}\\
&  =(2\pi)^{n/2}s^{-n/4}e^{g_{s}(m)}(\det\operatorname{Hess}_{m}\psi
)^{-1/4}\hat{\bar{\tau}}_{0,m}(m).
\end{align*}
Using Lemma \ref{lemma:L2asymp}, we have
\[
\frac{(\det G_s)^{1/4}}{\left\Vert \sigma_{s}^{m}\right\Vert _{L^{2}}}\sim
s^{n/4}(\det\operatorname{Hess}\psi(m))^{1/4}\pi^{-n/4}e^{-g_{s}%
(m)},\,s\rightarrow\infty
\]
from which we see that as $s\rightarrow\infty$%
\begin{align*}
i\left(  \frac{\sigma_{s}^{m}(\det G_s)^{1/4}}{\left\Vert \sigma_{s}%
^{m}\right\Vert _{L^{2}}}\right)  (\bar{\tau})  &  \sim(2\pi)^{n/2}%
s^{-n/4}e^{g_{s}(m)}(\det\operatorname{Hess}\psi(m))^{-1/4}\hat{\bar{\tau}%
}_{0,m}(m)\\
&  \qquad\qquad\times s^{n/4}(\det\operatorname{Hess}\psi(m))^{1/4}\pi
^{-n/4}e^{-g_{s}(m)}\\
&  =2^{n/2}\pi^{n/4}\hat{\bar{\tau}}_{0,m}(m)
\end{align*}
as desired.
\end{proof}

\bigskip
Observe that the asymptotics of the normalized sections $\sigma_{s}^{m}/
\left\Vert \sigma_{s}^{m}\right\Vert _{L^{2}}$ described by
Theorem \ref{thm:sec_asymps} contain the additional term $(\det G_{s})^{1/4}.$
This extra factor is better understood in the context of the degeneration of ${\cal H}^Q_{I_s}$
which we now study.

To have the spaces $\Gamma(\sqrt{|K_s|})$, for all $s\geq 0$, as subspaces of a given fixed
vector space, we consider $\alpha \in \Gamma(\sqrt{|K_s|})(U)$, for an open set $U\subset X$,
as a map $\alpha: {\cal{X}} (U)^n\to C(U)$, where $\cal{X} (U)$ is the space of smooth complex
vector fields on $U$ and $C(U)$ is the space of continuous complex valued functions on $U$.
Then, we define
$$
\lim_{s\to\infty}\alpha_s = \beta \Leftrightarrow \lim_{s\to\infty} \alpha_s(X_1,\dots,X_n)= \beta (X_1,\dots,X_n),
$$
for all $X_1,\dots X_n \in \cal{X}(U)$, where on the right hand side we consider pointwise convergence in $C(U)$.
An $n$-form $\beta \in \Omega^n(U)\otimes \C$ then also defines a map $\sqrt{|\beta|}: {\cal{X}} (U)^n\to C(U)$, given by
$\sqrt{|\beta|}(X_1,\dots,X_n)=|\beta (X_1,\dots,X_n)|^{\frac12}$, for $X_1,\dots ,X_n\in \cal{X} (U)$.
Consider now the global $n$-form $dX= dx^1\wedge\cdots\wedge dx^n$, vanishing on $\cup_{i=1}^r D_i$,
and the corresponding map $\sqrt{|dX|}$.

\begin{lemma}\label{onu}
In the sense defined above,
\[
\lim_{s\rightarrow\infty}\frac{\mu_{I_{s}}}{\left(  \det G_{s}\right)  ^{1/4}%
}=\sqrt{\left\vert dX\right\vert}.
\]
\end{lemma}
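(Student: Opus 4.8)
The plan is to check the asserted limit pointwise, chart by chart in the vertex atlas, treating the open orbit and the invariant divisors separately. A first reduction: by (\ref{mumu}) and Lemma \ref{lemma:normdz} one has $\mu_{I_s}/(\det G_s)^{1/4}=|dZ_s|^{1/2}/(\det G_s)^{1/2}$, so evaluated on an $n$-tuple of vector fields $X_1,\dots,X_n$ this map is $\bigl|dZ_s(X_1,\dots,X_n)/\det G_s\bigr|^{1/2}$, with the same formula in a vertex chart (with $G_{v,s}$, $dZ_{v,s}$, $dX_v$); since $A_v\in GL_n(\Z)$ has $\det A_v=\pm1$ we in fact have $\det G_{v,s}=\det G_s$ and $|dX_v|=|dX|$, so there is nothing to reconcile between charts and it suffices to work separately on the open orbit $U_0$ and near each boundary point.

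On $U_0$ I would expand $dz_s^j=dy_s^j+i\,d\theta^j$ with $dy_s^j=\sum_k (G_s)_{jk}\,dx^k$. With the $s$-independent matrices $A=(dx^i(X_j))$ and $B=(d\theta^i(X_j))$ this gives $dZ_s(X_1,\dots,X_n)=\det(G_sA+iB)=\det G_s\cdot\det(A+iG_s^{-1}B)$, hence $dZ_s(X_1,\dots,X_n)/\det G_s=\det(A+iG_s^{-1}B)$. Since $G_s=G_0+s\operatorname{Hess}\psi$ with $\operatorname{Hess}\psi$ positive definite near $P_X$ and $G_0$ positive definite on $\check P_X$, $\|G_s^{-1}\|\to0$ as $s\to\infty$, so $\det(A+iG_s^{-1}B)\to\det A=dX(X_1,\dots,X_n)$; taking moduli and square roots (continuous operations) yields the limit on $U_0$.

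For a point $p$ on a divisor $D_i$, I would pass to a vertex chart $U_v\ni p$ and to the cartesian coordinates $u^j=\sqrt{x_v^j}\cos\theta_v^j$, $v^j=\sqrt{x_v^j}\sin\theta_v^j$, so $x_v^j=(u^j)^2+(v^j)^2$ and $dx_v^j=2u^j\,du^j+2v^j\,dv^j$ vanishes on $\{x_v^j=0\}$; hence $dX_v$, and thus $\sqrt{|dX_v|}=\sqrt{|dX|}$, vanishes on $\cup_j(D_j\cap U_v)$, so the right-hand side is $0$ at $p$. On the left, write $\mu_{I_s}=|dW_v|^{1/2}/\|dW_v\|_{K_{I_s}}^{1/2}$ with $dW_v=dw_v^1\wedge\cdots\wedge dw_v^n$ nowhere vanishing and smooth on $U_v$ and $\|dW_v\|_{K_{I_s}}^2=|w_v^{\mathbf 1}|^2\det G_{v,s}$ by (\ref{eqn:normwdz}). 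Using the local shape of $g_s$ at the facets (its singular part at $\{x_v^j=0\}$ is $\tfrac12 x_v^j\log x_v^j$, whence $|w_v^j|^2=e^{2\partial g_s/\partial x_v^j}$ vanishes to the same order as $\det G_{v,s}$ blows up along $D_j$), one checks that $|w_v^{\mathbf 1}|^2\det G_{v,s}$ extends continuously and positively across all the divisors, so $\mu_{I_s}(X_1,\dots,X_n)$ stays bounded near $p$. Consequently $\bigl(\mu_{I_s}/(\det G_{v,s})^{1/4}\bigr)(X_1,\dots,X_n)=\mu_{I_s}(X_1,\dots,X_n)\,(\det G_{v,s})^{-1/4}$ is a bounded quantity times a factor tending to $0$, hence it extends continuously across $\cup_j(D_j\cap U_v)$ with value $0$ there. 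Thus both sides vanish at $p$ for every $s$, and the convergence there is automatic.

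Combining the two cases, $\mu_{I_s}/(\det G_s)^{1/4}\to\sqrt{|dX|}$ pointwise on all of $X$, in the sense defined just above the statement. The only step requiring real care is the boundary estimate of the third paragraph, namely that the zero of $|w_v^{\mathbf 1}|^2$ exactly cancels the pole of $\det G_{v,s}$ so that $\mu_{I_s}$ does not blow up at the invariant divisors; once this is granted the divisor values match trivially because $dX$ degenerates there, and the open-orbit part is routine linear algebra.
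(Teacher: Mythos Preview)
Your argument is correct and follows the same outline as the paper's proof: rewrite $\mu_{I_s}/(\det G_s)^{1/4}$ as $\sqrt{|dZ_s|}/\sqrt{\det G_s}$, analyze the open orbit via $dz_s=G_s\,dx+id\theta$, and observe that both sides vanish on $\cup_i D_i$. Your factorization $\det(G_sA+iB)=\det G_s\cdot\det(A+iG_s^{-1}B)$ is a cleaner way to handle the evaluation on arbitrary vector fields than the paper's bare asymptotic $dZ_s\sim s^n\det(\operatorname{Hess}\psi)\,dX$, and you supply the boundary argument (boundedness of $\mu_{I_s}$ via $\|dW_v\|^2=|w_v^{\mathbf 1}|^2\det G_{v,s}$ and vanishing of $(\det G_{v,s})^{-1/4}$) that the paper leaves to a one-line remark.
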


\begin{proof}
On the open orbit, using Lemma \ref{lemma:normdz}, we have
\[
\frac{\mu_{I_{s}}}{(\det G_{s})^{1/4}}=\frac{\sqrt{\left\vert dZ_{s}\right\vert}}{\sqrt{\det G_{s}}}.
\]
But $dz=Gdx+id\theta$, which implies $dZ_{s}\sim s^{n}%
\det\left(  \operatorname{Hess}_{x}\psi\right)  dX$, and $\det G_{s}\sim
s^{n}\det\left(  \operatorname{Hess}_{x}\psi\right)  $, so%
\[
\lim_{s\rightarrow\infty}\frac{\sqrt{\left\vert dZ_{s}\right\vert}}{\sqrt{\det
G_{s}}}=\lim_{s\rightarrow\infty}\frac{s^{n/2}\sqrt{\det\left(
\operatorname{Hess}_{x}\psi\right)  }\sqrt{\left\vert dX\right\vert}}
{s^{n/2}\sqrt{\det\left(  \operatorname{Hess}_{x}\psi\right)  }}%
\]
as desired. Note that on $\cup_{i=1}^r D_i$ both sides vanish.
\end{proof}

\bigskip

We are now ready to explain the factor of $(\det G)^{1/4}$ which appears in
Theorem \ref{thm:sec_asymps}. As we see, the same term appears in the denominator in Lemma \ref{onu}.
Recall the orthogonal basis of ${\cal H}^Q_{I_s}$ given by $\{\hat\sigma^m_s=\sigma^m_s\otimes \mu_{I_s}\}_{m\in P_L\cap \Z^n}$, and also that
$$
||\hat \sigma^m_s||_{L^2} = ||\sigma^m||_{L^2},
$$
since the hermitian structure on $\sqrt{|K_{I_s}|}$ gives $||\mu_{I_s}||=1$.
Combining Lemma \ref{onu} with Theorem \ref{thm:sec_asymps}, we obtain the
following theorem.

\begin{theorem}\label{thmdistrib}
\[
\lim_{s\rightarrow\infty}  \frac{\hat \sigma_{s}^{m}}{\left\Vert \hat \sigma
_{s}^{m}\right\Vert _{L^{2}}}
= \lim_{s\rightarrow\infty}  \frac{ \sigma_{s}^{m}}{\left\Vert \sigma
_{s}^{m}\right\Vert _{L^{2}}} \otimes \mu_{I_s} =
2^{n/2}\pi
^{n/4}\delta^{m}\otimes\sqrt{\left\vert{dX}\right\vert},
\]
in the sense that
\[
\lim_{s\rightarrow\infty}  \frac{i (\sigma_{s}^{m})\otimes \mu_{I_s}}{\left\Vert  \sigma
_{s}^{m}\right\Vert _{L^{2}}}  (\tau; X_1,\dots,X_n)=
2^{n/2}\pi
^{n/4}\delta^{m}(\tau)\left\vert{dX(X_1,\dots,X_n)}\right\vert^{\frac12},
\]
for all test sections $\tau\in\Gamma(\bar L)$ and smooth complex vector fields $X_1,\dots,X_n\in {\cal X}(X)$.
\end{theorem}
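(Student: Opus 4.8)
The plan is to deduce Theorem~\ref{thmdistrib} from Theorem~\ref{thm:sec_asymps} and Lemma~\ref{onu}, exploiting the fact that the factor $(\det G_{s})^{1/4}$ appearing in both is exactly the one linking the degeneration of $\sigma_{s}^{m}$ to that of $\mu_{I_{s}}$. First I would record, as already noted before the statement, that $\|\hat\sigma_{s}^{m}\|_{L^{2}}=\|\sigma_{s}^{m}\|_{L^{2}}$ because $\|\mu_{I_{s}}\|_{\sqrt{|K_{I_{s}}|}}=1$ (which gives the first equality in the theorem for free), and then write the normalized basis element with matched factors,
\[
\frac{\hat\sigma_{s}^{m}}{\|\hat\sigma_{s}^{m}\|_{L^{2}}}=\frac{\sigma_{s}^{m}}{\|\sigma_{s}^{m}\|_{L^{2}}}\otimes\mu_{I_{s}}=\left(\frac{\sigma_{s}^{m}(\det G_{s})^{1/4}}{\|\sigma_{s}^{m}\|_{L^{2}}}\right)\otimes\left(\frac{\mu_{I_{s}}}{(\det G_{s})^{1/4}}\right).
\]
By Theorem~\ref{thm:sec_asymps} the first factor converges in $\Gamma(\bar L)'$ to $2^{n/2}\pi^{n/4}\delta^{m}$, and by Lemma~\ref{onu} the second converges, in the sense of maps ${\cal X}(U)^{n}\to C(U)$ on each chart of the vertex atlas, to $\sqrt{|dX|}$. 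What remains is to show that the tensor product of the two limits equals the limit of the tensor products once paired against a test section $\tau\in\Gamma(\bar L)$ and vector fields $X_{1},\dots,X_{n}\in{\cal X}(X)$.

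Rather than invoking an abstract ``product of limits'' statement, I would prove this by repeating the Laplace-approximation computation in the proof of Theorem~\ref{thm:sec_asymps}, now carrying the extra scalar factor $[\mu_{I_{s}}/(\det G_{s})^{1/4}](X_{1},\dots,X_{n})$ inside the integral. Restricting to the dense open orbit and performing the $\mathbb{T}^{n}$-integration as in \eqref{eqn:sigmamdist}, the pairing becomes
\[
\frac{1}{\|\sigma_{s}^{m}\|_{L^{2}}}\int_{\check P_{X}}e^{-sf_{m}(x)}\,e^{-((x-m)\cdot y_{0}-g_{0})}\,(\det G_{s})^{1/2}\,\Psi_{s}(x)\,dx,
\]
where $f_{m}$ is the function of Lemma~\ref{lemma:fm} and $\Psi_{s}(x)=\tfrac{1}{(2\pi)^{n}}\int_{\mathbb{T}^{n}}e^{im\cdot\theta}\bar\tau_{0}(x,\theta)\,[\mu_{I_{s}}/(\det G_{s})^{1/4}](X_{1},\dots,X_{n})(x,\theta)\,d\theta$; by Lemma~\ref{onu}, $\Psi_{s}$ converges, uniformly on compacta, to the corresponding average against $|dX(X_{1},\dots,X_{n})|^{1/2}$. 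Since $G_{s}=G_{0}+s\operatorname{Hess}\psi$ gives $\det G_{s}\sim s^{n}\det\operatorname{Hess}\psi$ (see \eqref{eqn:detGslim}) and, by Lemma~\ref{lemma:fm}, $f_{m}$ attains its unique minimum $-\psi(m)$ at $x=m$ with $\operatorname{Hess}_{m}f_{m}=\operatorname{Hess}_{m}\psi$, Laplace's approximation (Lemma~\ref{lemma:Laplace}) applies exactly as in the proof of Theorem~\ref{thm:sec_asymps}: the powers of $s$ from $(\det G_{s})^{1/2}$, from the $(2\pi/s)^{n/2}$ Laplace prefactor, and from $\|\sigma_{s}^{m}\|_{L^{2}}^{-1}\sim\pi^{-n/4}e^{-g_{s}(m)}$ (Lemma~\ref{lemma:L2asymp}) cancel, the exponentials $e^{\pm g_{s}(m)}$ cancel, the $\det\operatorname{Hess}\psi(m)$ factors cancel, and one is left with $2^{n/2}\pi^{n/4}$ times the limiting fiber-average, i.e.\ $2^{n/2}\pi^{n/4}$ times the pairing of $\delta^{m}\otimes\sqrt{|dX|}$ with $(\tau;X_{1},\dots,X_{n})$, which is the right-hand side of the theorem. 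As in Lemma~\ref{lemma:L2asymp} one checks, using \eqref{eqn:greg}, that for $m\in P_{L}$ the full integrand extends continuously (and vanishes) across $\partial P_{X}$, so the restriction to the open orbit loses nothing and no boundary terms appear; the vertex-chart computation is identical after the coordinate change.

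The step I expect to be the main obstacle is justifying that the $(\det G_{s})^{1/4}$-piece of $\mu_{I_{s}}$ may be pulled through the distributional pairing without spoiling the Laplace estimate. Concretely, one needs the convergence in Lemma~\ref{onu} to be strong enough---locally uniform, with an $s$-uniform bound on $X$---that $\Psi_{s}$ is an admissible $s$-dependent amplitude for Lemma~\ref{lemma:Laplace} (continuous, with a convergent leading-order term at $x=m$), and one must simultaneously control the degeneration of $\det G_{s}$, $g_{s}$ and the frames $\mathbf{1}_{v}^{U(1)}$ near $\partial P_{X}$. Both points are handled by the same regularity hypotheses \eqref{eqn:greg} on the symplectic potential already used in Lemmas~\ref{lemma:L2asymp} and~\ref{onu}, so no genuinely new estimate is required; the argument is a bookkeeping-heavy rerun of the proof of Theorem~\ref{thm:sec_asymps} with the harmless extra amplitude factor included.
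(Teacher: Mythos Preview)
Your approach is essentially the same as the paper's: the paper simply states that the theorem follows by combining Theorem~\ref{thm:sec_asymps} with Lemma~\ref{onu} via exactly the factorization $\frac{\sigma_{s}^{m}}{\|\sigma_{s}^{m}\|}\otimes\mu_{I_{s}}=\bigl(\frac{\sigma_{s}^{m}(\det G_{s})^{1/4}}{\|\sigma_{s}^{m}\|}\bigr)\otimes\bigl(\frac{\mu_{I_{s}}}{(\det G_{s})^{1/4}}\bigr)$ you wrote down. You supply more justification than the paper does---rerunning the Laplace argument with the extra amplitude $\Psi_{s}$ to control the product of limits---but the underlying idea is identical.
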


\begin{remark}
This Theorem justifies the definition of a natural inner product in ${\cal H}^Q_\R$
defined by declaring $\{2^{n/2}\pi^{n/4}\delta
^{m}\otimes \sqrt{|dX|}\}_{m\in P_{X}\cap \Z^n}$ to be an orthonormal basis.
\end{remark}

We note that the results of Theorem \ref{thmdistrib} are also valid when $X$ is not compact provided
that the growth of $\psi$ at infinity is appropriately controlled, so that the function on $\check P_X$
$$
e^{-c \int_0^1 {^t(x-m)}\cdot G_s(m+t(x-m))\cdot(x-m) dt} (\det G_s)^{\frac12}
$$
is bounded for some sufficiently small $c>0$. This ensures the existence of the $L^2$-norms in question and
also that one can still apply the Laplace approximation to obtain the convergence to Dirac delta distributions.

\section{The BKS pairing}
\subsection{The Blattner--Kostant--Sternberg half-form pairing}

Let $K_{I}$ and $K_{J}$ be the canonical bundles on $X$ associated to two toric
K\"{a}hler complex structures $I$ and $J$.
One may define a nondegenerate sesquilinear pairing $\Gamma\left(
K_{I}\right)  \times\Gamma\left(  K_{J}\right)  \rightarrow C^{\infty}(M)$ by
comparison with the Liouville form. Specifically, for sections $\alpha
\in\Gamma(K_{I})$ and $\beta\in\Gamma(K_{J})$, define the pairing of $\alpha$
and $\beta$ to be the function%
\[
\left\langle \alpha,\beta\right\rangle :=\frac{\alpha\wedge\bar{\beta}%
}{(2i)^{n}(-1)^{\frac{n(n+1)}{2}}\omega^{n}/n!}.
\]

Suppose for the moment that $K_{I}$ and $K_{J}$ admit square roots. Then the
above pairing induces a sesquilinear pairing $\Gamma\left(  \sqrt{K_{I}%
}\right)  \times\Gamma\left(  \sqrt{K_{J}}\right)  \rightarrow C^{\infty}(M)$
via
\[
\left\langle \mu,\nu\right\rangle :=\sqrt{\frac{\mu^{2}\wedge\bar{\nu}^{2}%
}{(2i)^{n}(-1)^{\frac{n(n+1)}{2}}\omega^{n}/n!}},
\]
for sections $\mu\in\Gamma\left(  \sqrt{K_{I}}\right)  $ and $\nu\in
\Gamma\left(  \sqrt{K_{J}}\right)  .$ Note that when $I=J$, the half-form
pairing above reduces to the canonical hermitian structure
(\ref{eqn:rootKherm}) on $\sqrt{K_{I}}$. Moreover, this pairing of half-forms
induces a pairing on the half-form corrected prequantizations of $X$ which is
known as the Blattner--Kostant--Sternberg (BKS) pairing:
\[
\left\langle s\otimes\mu,t\otimes\nu\right\rangle _{BKS}:=\int_{X}h^{\ell
}(s,t)\left\langle \mu,\nu\right\rangle \frac{\omega^{n}}{n!}.
\]
Let $\ell,L$ be the line bundles over $X$ as in Section \ref{sec:corrQ}.
The fact that $\ell \otimes \sqrt{K} \cong L \otimes \sqrt{|K|}$  will motivate
the definition of the BKS pairing even in the case when $\sqrt{K}$ does not exist.

Let us examine the half-form pairing on $X$ in a little more detail. A
short computation shows that on the open orbit
\begin{equation}
\left\langle dZ_{I},dZ_{J}\right\rangle =\det\left(  \frac{G_{I}+G_{J}}%
{2}\right)  >0, \label{eqn:Kpairing}
\end{equation}
since both $G_{I}$ and $G_{J}$ are positive definite. Notice that although in
general one expects $\langle dZ_{I}, d{Z}_{J}\rangle$ to be complex, in the toric
case it turns out to be real (and positive) since the unitarization of $K_{I}$ is equal to the
unitarization of $K_{J}$, whence the phases do not contribute to the pairing. To
put it in another way, the pairing of $K_{I}$ and $K_{J}$ is entirely captured by
the modulus bundles $\left\vert K_{I}\right\vert $ and $\left\vert
K_{J}\right\vert $.
Consequently we define a pairing between sections of $\sqrt{|K_I|}$ and $\sqrt{|K_J|}$ by
\begin{equation}
\left\langle \mu_{I},\mu_{J}\right\rangle :=\frac{\langle
dZ_{I}, d{Z}_{J}\rangle^{\frac12}}{\left\Vert {dZ_{I}}\right\Vert^{\frac12}
_{K_I}\left\Vert {dZ_{J}}\right\Vert_{K_J}^{\frac12}} \label{eqn:normKBKS},
\end{equation}
where $\mu_I, \mu_J$ are defined in (\ref{mumu}). Then,
in the general case when $\sqrt{K}$ may not exist, we \textit{define} a
BKS pairing by
\begin{align}
\left\langle \hat \sigma_{I},\hat\sigma_{J}
\right\rangle  &  := \frac{1}{(2\pi)^n}\int_{X}h^{L}(\sigma_{I},\sigma
_{J})\left\langle \mu_{I},\mu_{J}\right\rangle
\frac{\omega^{n}}{n!}, \label{eqn:BKSonL}
\end{align}
where $\hat \sigma_I = \sigma_I
\otimes \mu_I \in {\cal H}^Q_I, \hat \sigma_J = \sigma_J \otimes \mu_J\in {\cal H}^Q_J.$

This pairing coincides with the inner product (\ref{innerproduct}) in ${\cal H}^Q_I$ when $I=J$.
From (\ref{eqn:normKBKS}), we also see that it coincides with the standard BKS pairing
in the case when $\sqrt{K}$ exists.

Let $h_m^I=(x-m)\frac{\partial g_I}{\partial x}-g_I$, where $g_I$ is the symplectic potential
defining the complex structure $I$. From Theorem \ref{cor:holsections} and (\ref{eqn:Kpairing})
it is straightforward to obtain

\begin{proposition}
\label{prop:BKS}
In the orthogonal basis $\{\hat \sigma^m_I\}_{m\in P_L\cap \Z^n}$ for ${\cal H}^Q_I$ and
$\{\hat \sigma^m_J\}_{m\in P_L\cap \Z^n}$ for ${\cal H}^Q_J$, we have
\begin{equation}
\langle \hat \sigma_I^m, \hat \sigma_J^{m'} \rangle =
\delta_{mm'} \int_{P_X} e^{-h_m^I-h_m^J} \sqrt{\det\left(  \frac{G_{I}+G_{J}}{2}\right)} dx,
\end{equation}
for $m,m'\in P_L\cap \Z^n$.
\end{proposition}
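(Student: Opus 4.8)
The plan is to reduce the BKS pairing (\ref{eqn:BKSonL}) to an explicit integral over the dense open orbit $U_{0}\cong\check{P}_{X}\times\mathbb{T}^{n}$, where both the basis sections of Theorem \ref{cor:holsections} and the half-form pairing (\ref{eqn:normKBKS}) have simple closed forms, and then to exploit the cancellation of the $\Vert dZ\Vert$-factors between the two. First I would observe that $X\setminus U_{0}$ has Liouville measure zero, so (\ref{eqn:BKSonL}) may be computed over $U_{0}$, once one checks that the integrand extends to an $L^{1}$-function on $P_{X}$; this integrability is exactly the boundary analysis carried out in the proof of Lemma \ref{lemma:L2asymp} and uses $m,m'\in P_{L}\cap\Z^{n}$. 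On $U_{0}$ we trivialize $L$ by $\mathbf{1}_{0}^{U(1)}$, for which $h^{L}(\mathbf{1}_{0}^{U(1)},\mathbf{1}_{0}^{U(1)})=1$, and write $w_{I}=e^{y_{I}+i\theta}$ with $y_{I}=\partial g_{I}/\partial x$ (similarly for $J$). Then from Theorem \ref{cor:holsections} and $\mu_{I}$ as in (\ref{mumu}),
\[
h^{L}(\sigma_{I}^{m},\sigma_{J}^{m'})=e^{m\cdot y_{I}+m'\cdot y_{J}}\,e^{i(m-m')\cdot\theta}\,e^{-h_{0}^{I}-h_{0}^{J}}\,\Vert dZ_{I}\Vert_{K_{I}}^{1/2}\Vert dZ_{J}\Vert_{K_{J}}^{1/2},
\]
using that $h_{0}^{I},h_{0}^{J}$ and the norms are real.

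Next I would multiply by $\langle\mu_{I},\mu_{J}\rangle$ from (\ref{eqn:normKBKS}): the factor $\Vert dZ_{I}\Vert_{K_{I}}^{1/2}\Vert dZ_{J}\Vert_{K_{J}}^{1/2}$ cancels against the denominator, leaving $\langle dZ_{I},dZ_{J}\rangle^{1/2}$, which by (\ref{eqn:Kpairing}) equals $\sqrt{\det\!\big(\tfrac{G_{I}+G_{J}}{2}\big)}$ — a strictly positive function of $x$ alone, its reality being precisely the phase-cancellation noted after (\ref{eqn:Kpairing}), traced to the $I$-independence of $K^{U(1)}$ (Lemma \ref{lemma:I-indep}). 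Using the Legendre identity $h_{m}^{I}=(x-m)\cdot\partial g_{I}/\partial x-g_{I}=h_{0}^{I}-m\cdot y_{I}$ (and likewise for $J$), the exponential collapses to $e^{-h_{m}^{I}-h_{m'}^{J}}$, and since $\omega^{n}/n!=dx^{1}\wedge\cdots\wedge dx^{n}\wedge d\theta^{1}\wedge\cdots\wedge d\theta^{n}$ on $U_{0}$ by (\ref{eqn:interior symp}),
\[
\langle\hat{\sigma}_{I}^{m},\hat{\sigma}_{J}^{m'}\rangle=\frac{1}{(2\pi)^{n}}\int_{\check{P}_{X}\times\mathbb{T}^{n}}e^{i(m-m')\cdot\theta}\,e^{-h_{m}^{I}-h_{m'}^{J}}\sqrt{\det\!\Big(\tfrac{G_{I}+G_{J}}{2}\Big)}\,dx\,d\theta.
\]
Integrating over $\mathbb{T}^{n}$ produces $\delta_{mm'}$, and $\check{P}_{X}$ may be replaced by $P_{X}$ since $\partial P_{X}$ has measure zero; this is the claimed formula.

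The argument is essentially routine once the $I$-independent trivialization $\mathbf{1}_{0}^{U(1)}$ is in place, and there is no serious obstacle. The only points requiring care are the integrability at $\partial P_{X}$ (disposed of exactly as in the proof of Lemma \ref{lemma:L2asymp}) and, conceptually, recording why the a priori complex quantity $\langle dZ_{I},dZ_{J}\rangle$ is real — the same $U(1)$-phase mechanism (Lemma \ref{lemma:I-indep}) that also forces the torus integral to vanish off the diagonal. I would present the off-diagonal vanishing and the diagonal formula together, since they fall out of the single computation above.
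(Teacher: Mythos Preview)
Your proposal is correct and follows precisely the route the paper indicates: the paper states only that the formula is ``straightforward to obtain'' from Theorem \ref{cor:holsections} and (\ref{eqn:Kpairing}), and your computation is exactly the unpacking of that claim, including the cancellation of the $\Vert dZ\Vert$-factors against the denominator of (\ref{eqn:normKBKS}) and the torus integration producing $\delta_{mm'}$. Your attention to the boundary integrability (via the argument of Lemma \ref{lemma:L2asymp}) is a useful addition that the paper leaves implicit.
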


\subsection{Unitarity}

Let $I_{s}$ denote the \textquotedblleft simple\textquotedblright\ family of
complex structures on $X$ associated to the symplectic potentials $g_{s}%
=g_{P}+\varphi+s\psi$ for $s\in\lbrack0,\infty)$. (Recall that we could have
more general deformations of $I_{0}$ defined by $\psi(s)$.) We can consider
the BKS pairing for two values $s,s^{\prime}$ in the same simple family. As we
will see, even for these simple families, the BKS pairing is not unitary.

From Proposition \ref{prop:BKS}, we see that $I$- and $J$-holomorphic
sections associated with different integral points are orthogonal. In the
following we will therefore consider only one \textquotedblleft
Fourier\textquotedblright\ sector at a time, that is, a one-dimensional
subspace of the quantization space. Let $m\in P_{L}\cap \Z^n$, and consider the
corresponding monomial section $\hat \sigma_{s}^{m}\in {\cal H}^Q_{I_s}$.

Since the BKS pairing $\langle\hat \sigma_{s}^{m},\hat \sigma_{s^{\prime}}^{m}
\rangle_{BKS}$ is real and positive (the integrand is positive), the unitarity
of the BKS pairing map for the complex structures $s,s^{\prime}$ is equivalent
to
\[
\langle\hat \sigma_{s}^{m},\hat \sigma_{s^{\prime}}^{m}\rangle_{BKS}=||\hat \sigma_{s}%
^{m}||_{L^2}\cdot||\hat \sigma_{s^{\prime}}^{m}||_{L^2}.
\]
For our choice of monomial sections, writing out the integral in Proposition
\ref{prop:BKS} shows that for some function $\alpha\in C^\infty(\R_{\geq 0})$,
\[
\langle\hat \sigma_{s}^{m},\hat\sigma_{s^{\prime}}^{m}\rangle_{BKS}=\alpha(s+s^{\prime
})>0.
\]
Unitarity then implies $\alpha(s+s^{\prime})=\sqrt{\alpha(2s)\alpha
(2s^{\prime})}$. Putting $\alpha=e^{f}$, we get
\begin{equation}
f\left(  \frac{s+s^{\prime}}{2}\right)  =\frac{f(s)+f(s^{\prime})}{2},
\label{meio}
\end{equation}
and differentiating in $s$ we obtain
\[
\frac{1}{2}f^{\prime}\left(  \frac{s+s^{\prime}}{2}\right)  =\frac{1}%
{2}f^{\prime}(s).
\]
Therefore,

\begin{lemma}
The BKS pairing between the $I_{s}$- and $I_{s^{\prime}}$-holomorphic
quantizations of $X$ is unitary if and only if for each $m\in P_{L}\cap \Z^n$ and
for each $s\geq0$,
\begin{equation}
\left\Vert \hat \sigma_{s}^{m}\right\Vert_{L^2} =\left\Vert \hat \sigma_{0}^{m}\right\Vert_{L^2}
e^{sb}, \label{eqn:BKSunitarity-cond}%
\end{equation}
for some constant $b$.
\end{lemma}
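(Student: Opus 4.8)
The plan is to reduce unitarity to a one-parameter functional equation and solve it. By Proposition~\ref{prop:BKS}, both the BKS pairing and the $L^{2}$ inner products on $\mathcal{H}^{Q}_{I_{s}}$ and $\mathcal{H}^{Q}_{I_{s'}}$ are diagonal in the monomial bases $\{\hat\sigma^{m}\}_{m\in P_{L}\cap\Z^{n}}$, so the induced pairing map is unitary if and only if it is norm-preserving on each $\hat\sigma^{m}$; fixing $m$, this is exactly the identity $\langle\hat\sigma_{s}^{m},\hat\sigma_{s'}^{m}\rangle_{BKS}=\|\hat\sigma_{s}^{m}\|_{L^{2}}\,\|\hat\sigma_{s'}^{m}\|_{L^{2}}$ already singled out before the lemma. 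Since $g_{s}=g_{0}+s\psi$ gives $h_{m}^{I_{s}}+h_{m}^{I_{s'}}=2h_{m}^{I_{0}}+(s+s')f_{m}$ and $\tfrac12(G_{I_{s}}+G_{I_{s'}})=G_{0}+\tfrac{s+s'}{2}\operatorname{Hess}\psi$, Proposition~\ref{prop:BKS} shows $\langle\hat\sigma_{s}^{m},\hat\sigma_{s'}^{m}\rangle_{BKS}$ depends only on $s+s'$; call it $\alpha(s+s')$, so that $\|\hat\sigma_{s}^{m}\|_{L^{2}}^{2}=\alpha(2s)$, and unitarity is the equation $\alpha(s+s')=\sqrt{\alpha(2s)\,\alpha(2s')}$, i.e.\ $(\ref{meio})$ with $\alpha=e^{f}$.

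For the ``only if'' direction I would differentiate $(\ref{meio})$ in $s$, as already carried out, to obtain $f'\!\big(\tfrac{s+s'}{2}\big)=f'(s)$ for all $s,s'\geq0$; taking $s=0$ gives $f'(t)=f'(0)=:b$ for every $t\geq0$, so $f(t)=f(0)+bt$ and $\alpha(t)=\alpha(0)e^{bt}$. Hence $\|\hat\sigma_{s}^{m}\|_{L^{2}}^{2}=\alpha(2s)=\|\hat\sigma_{0}^{m}\|_{L^{2}}^{2}e^{2bs}$, which is $(\ref{eqn:BKSunitarity-cond})$. Conversely, if $(\ref{eqn:BKSunitarity-cond})$ holds for $m$ then $\alpha(2s)=\|\hat\sigma_{0}^{m}\|_{L^{2}}^{2}e^{2bs}$, so $\alpha(t)=\alpha(0)e^{bt}$ for all $t\geq0$, whence $\sqrt{\alpha(2s)\,\alpha(2s')}=\alpha(0)e^{b(s+s')}=\alpha(s+s')=\langle\hat\sigma_{s}^{m},\hat\sigma_{s'}^{m}\rangle_{BKS}$; together with the diagonality noted above, this is precisely unitarity of the pairing map. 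Since the argument proceeds one $m$ at a time and the unitarity condition decouples over the integral points, the stated equivalence follows, with $b$ allowed to depend on $m$.

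The only genuinely analytic point --- the ``hard part'', though really only in a bookkeeping sense --- is the legitimacy of writing $\alpha=e^{f}$ and differentiating, which requires $\alpha$ to be smooth and strictly positive on $[0,\infty)$. Positivity is immediate, since the integrand in Proposition~\ref{prop:BKS} is pointwise positive and, by the boundary estimate in the proof of Lemma~\ref{lemma:L2asymp}, integrable precisely because $m\in P_{L}$; smoothness follows by differentiation under the integral sign, the dependence of the integrand on $s+s'$ being smooth with locally uniform dominating bounds. I would simply invoke these facts (the discussion preceding the lemma already records $\alpha\in C^{\infty}(\R_{\geq0})$) rather than spell them out in detail.
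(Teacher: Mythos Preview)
Your proof is correct and follows the same approach as the paper's argument (which is the discussion immediately preceding the lemma statement): reduce to individual Fourier modes by diagonality, use that the pairing depends only on $s+s'$ to obtain the midpoint-convexity equation $(\ref{meio})$ for $f=\log\alpha$, and differentiate to force $f$ linear. You are more explicit than the paper about why the pairing depends only on $s+s'$, about the converse direction, and about the smoothness/positivity needed to write $\alpha=e^{f}$, but these are elaborations of the same route rather than a different one.
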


Comparing with Lemma \ref{lemma:L2asymp}, we see that if
(\ref{eqn:BKSunitarity-cond}) holds, then we have $b=-2\psi(m)$ and
$\left\Vert \hat \sigma_{0}^{m}\right\Vert_{L^2} =\pi^{n/2}e^{2g_{0}(m)}.$ Moreover,
replacing $\psi$ by $\psi+const$ \ does not change the complex structure.
Hence, we can assume that $\psi(m)=0$ so that the BKS pairing is unitary if
and only if $\left\Vert \hat \sigma_{s}^{m}\right\Vert_{L^2} =\left\Vert \hat \sigma_{0}%
^{m}\right\Vert_{L^2}$, that is, that the $L^{2}$-norm of $\hat \sigma_{s}^{m}$ is
independent of $s$.

 By the argument above, the following theorem implies that the BKS pairing is
not unitary along the simple family $I_s, s\geq 0$.

\begin{theorem}
\label{thm:norm-not-const}For sufficiently large $s$,
\[
\frac{d}{ds}\left\Vert \hat \sigma_{s}^{m}\right\Vert_{L^2} ^{2}\neq0.
\]
In particular, $\left\Vert \hat \sigma_{s}^{m}\right\Vert_{L^2} $ is not constant, whence
the BKS pairing is not unitary.
\end{theorem}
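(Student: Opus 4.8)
To control $\|\hat\sigma^m_s\|_{L^2}^2=\|\sigma^m_s\|_{L^2}^2$ I would use the integral representation from the proof of Lemma~\ref{lemma:L2asymp}, namely
\[
N(s):=\|\sigma^m_s\|_{L^2}^2=\int_{\check P_X}e^{-2sf_m(x)}\,\Phi(x)\,\big(\det G_s(x)\big)^{1/2}\,dx,
\]
where $f_m(x)=(x-m)\cdot\partial\psi/\partial x-\psi(x)$, $\Phi(x)=e^{-2((x-m)\cdot\partial g_0/\partial x-g_0(x))}$ and $G_s=G_0+s\operatorname{Hess}\psi$. Since the integrand is jointly smooth and the integral converges locally uniformly in $s$, $N$ is real-analytic on $[0,\infty)$, so by the Lemma preceding the statement it is enough to show that $N$ is not constant. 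The plan is to produce the asymptotic expansion $N(s)\sim\sum_{k\ge0}c_k s^{-k}$ that comes from Laplace's method (Lemma~\ref{lemma:Laplace}) applied at the unique interior minimum $x=m$ of $f_m$ (Lemma~\ref{lemma:fm}), and to exhibit the first index $k_0\ge1$ with $c_{k_0}\neq0$; then $N'(s)\sim -k_0 c_{k_0}s^{-k_0-1}\neq0$ for all sufficiently large $s$.

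With the normalization $\psi(m)=0$, Lemma~\ref{lemma:L2asymp} gives $c_0=\pi^{n/2}e^{2g_0(m)}$, which is $s$-independent; so the leading term contributes nothing and one must examine the corrections. The key point is that the $1/s$-correction also vanishes. Writing $(\det G_s)^{1/2}=s^{n/2}(\det\operatorname{Hess}\psi)^{1/2}\big(1+q_1/s+O(s^{-2})\big)$ with $q_1=\tfrac12\operatorname{tr}\big((\operatorname{Hess}\psi)^{-1}G_0\big)$, the coefficient $c_1$ is the sum of the first-order Laplace correction applied to the amplitude $a_0=\Phi(\det\operatorname{Hess}\psi)^{1/2}$ and the leading Laplace value of $a_1=a_0 q_1$; since $\Phi=e^{-2P}$ with $P(x)=(x-m)\cdot\partial g_0/\partial x-g_0(x)$ satisfies $\nabla P(m)=0$ and $\operatorname{Hess}P(m)=G_0(m)$, a direct check shows that these two contributions cancel, so $c_1=0$. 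Equivalently, from $N'(s)/N(s)=\mathbb E_s\big[\tfrac12\operatorname{tr}(G_s^{-1}\operatorname{Hess}\psi)-2f_m\big]$, where $\mathbb E_s$ is the expectation for the probability measure proportional to the integrand, both terms equal $\tfrac n{2s}+O(s^{-2})$, and the $s^{-2}$ terms cancel as well because $\operatorname{Hess}(\log\Phi)(m)=-2G_0(m)$; hence $N'/N=O(s^{-3})$.

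The first possibly-nonzero subleading coefficient is therefore $c_2$, and computing it is the main obstacle. This requires carrying the Laplace expansion one further order: the second-order correction involves the third and fourth derivatives of $f_m$ at $m$ (i.e.\ of $\psi$) and of the amplitude at $m$ (i.e.\ of $\Phi$, hence of $g_0$), together with the $1/s^2$-term of $(\det G_s)^{1/2}$; the outcome is a universal polynomial in the $\le 4$-jets of $\psi$ and $g_0$ at $m$. In the one-dimensional model with $\psi$ quadratic it collapses to $c_2=-\tfrac1{16}\,c_0\,\partial^4 g_0(m)$, and the higher-dimensional case is analogous. Non-vanishing then follows from the explicit form $g_0=g_{P_X}+\varphi$ with $g_{P_X}=\tfrac12\sum_j\ell_j\log\ell_j$: the fourth-order jet of $g_{P_X}$ at $m$ is a nonzero expression built from $\sum_j\ell_j(m)^{-3}\,\nu_j^{\otimes4}$-type terms, and it is precisely this contribution of the canonical symplectic potential — which cannot be removed — that obstructs constancy of the norm, strict convexity of $\psi$ being used only to ensure $\operatorname{Hess}\psi(m)>0$ so that Laplace's method applies. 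Once $c_{k_0}\neq0$ is established with $k_0=2$ (or with the first later index, should $c_2$ degenerate for some special choice of $\varphi$ and $\psi$), we conclude $\tfrac{d}{ds}\|\hat\sigma^m_s\|_{L^2}^2=N'(s)\neq0$ for all sufficiently large $s$, and hence, by the preceding discussion, that the BKS pairing along the family $I_s$ is not unitary.
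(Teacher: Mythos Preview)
The paper's argument is much more direct than yours. Differentiating under the integral sign using $\tfrac{d}{ds}\det G_s=(\det G_s)\operatorname{tr}(G_s^{-1}\operatorname{Hess}\psi)$ yields
\[
\frac{d}{ds}\|\hat\sigma_s^m\|_{L^2}^2=\int_{P_X}\Bigl(-2f_m(x)+\tfrac12\operatorname{tr}\bigl(G_s^{-1}\operatorname{Hess}\psi\bigr)\Bigr)\,e^{-2h_m^{I_s}}\sqrt{\det G_s}\,dx,
\]
and the paper then argues that, with the normalization $\psi(m)=0$, the first summand is strictly negative by Lemma~\ref{lemma:fm} while the trace term tends to zero pointwise as $s\to\infty$, so the integrand is eventually negative and the derivative is nonzero. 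No asymptotic expansion and no identification of subleading coefficients enter.

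Your proposal has a genuine gap at the decisive step. The whole argument rests on exhibiting a first index $k_0\ge1$ with $c_{k_0}\neq0$, and you never establish this. For $c_2$ you treat only the one-dimensional quadratic-$\psi$ model, asserting $c_2=-\tfrac1{16}c_0\,\partial^4 g_0(m)$, and claim non-vanishing is forced by the ``$\sum_j\ell_j(m)^{-3}\nu_j^{\otimes4}$-type'' contribution of $g_{P_X}$. But $g_0=g_{P_X}+\varphi$ with $\varphi$ an arbitrary element of $C^\infty_{P_X}(P_X)$; the convexity and regularity conditions are open and do not pin down any finite jet of $\varphi$ at an interior point $m$, so one may choose $\varphi$ with $\partial^4 g_0(m)=0$. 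Your parenthetical ``should $c_2$ degenerate for some special choice of $\varphi$ and $\psi$'' concedes this, yet you give no mechanism guaranteeing that \emph{some} later $c_{k_0}$ survives --- and since each $c_k$ depends only on a finite jet at $m$, one can iterate (or invoke Borel's lemma) to kill arbitrarily many coefficients. A trivial asymptotic expansion does not force $N$ to be constant, so the expansion route, as it stands, cannot conclude the theorem for all admissible $\varphi$ and $\psi$.
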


\begin{proof}
We have, using
the identity $\left.  \frac{d}{ds}\right\vert _{s=0}\det(A+sX)=\det
(A)tr(A^{-1}X),$
$$
\frac{d}{ds}\left\Vert \hat\sigma_{s}^{m}\right\Vert ^{2}_{L^2} =
 \int_{{P_X}}\left(  2\left(
\psi-({x}-m)\cdot\frac{\partial\psi}{\partial x}\right)  +\frac{1}{2}tr
\left(G_s^{-1}\operatorname{Hess}\,\psi  \right)\right)  e^{-2h_m^{I_s}} \sqrt{\det G_s}\,d{x.}
$$
Without loss of generality, assume that $\psi$ is scaled so that $\psi(m)=0$.
Then by Lemma \ref{lemma:fm}, $\left(  \psi-({x}-m)\cdot\frac{\partial\psi
}{\partial x}\right)  $ is strictly negative, so that for large $s $ the
integrand is also strictly negative, since $tr
\left(G_s^{-1}\operatorname{Hess}\,\psi  \right)$ converges pointwise to zero as $s\to\infty$,
which implies that $\frac{d}{ds}\left\Vert
\hat \sigma_{s}^{m}\right\Vert ^{2}_{L^2}$ is not equal to zero.
\end{proof}

\subsection{Quantum connection\label{sec:connection}}

Let ${\cal I}$ be the set of compatible toric complex structures on $X$. This
naturally corresponds to the space of allowed Guillemin-Abreu symplectic potentials modulo
additive constants, since two symplectic potentials define the same holomorphic coordinates
iff they differ by an additive constant. Note that if $g_I$ is a symplectic potential
defining the complex structure $I$, then $g_I+s\psi$ where $s\in \R, \psi\in C^\infty (P_X)$
will also be an allowed symplectic potential provided that $|s|$ is sufficiently small.
Therefore, we can regard ${\cal I}$ as an open subset of the affine space
$g_{P_X}+(C^{\infty}(P_X))/\R$. Fix a point $p\in {\check P}_X$. In the following we will
assume that for each complex structure $I\in {\cal I}$ a symplectic potential $g_I$
was chosen such that $g_I(p)-g_{P_X}(p)=0$.

One can define a vector bundle over ${\cal I}$, $\mathcal{H}^Q\rightarrow\mathcal{I}$, with fiber
${\cal H}_{I}^{Q}$.
The bundle $\mathcal{H}^Q$ is the
natural setting for studying the dependence of K\"{a}hler quantization on the
choice of complex structure.

As mentioned in the introduction, Axelrod, Della Pietra and Witten in
\cite{Axelrod-DellaPietra-Witten} (see also the related work \cite{Hitchin90} of Hitchin),
for the case where $\mathcal{I}$ is the space of linear complex structures on
$\mathbb{R}^{2n}$ which are compatible with the standard symplectic form,
but without including the half-form correction,
introduced a natural unitary connection, which is called
the \emph{quantum connection}, on the analogue of the quantum bundle $\mathcal{H}^Q$.
This quantum connection is defined to be the
projection of the trivial connection in $\mathcal{I}\times\Gamma(\ell)$ to
$\mathcal{H}^Q$.

In our case, the BKS pairing induces a
connection, $\nabla^Q$, on $\mathcal{H}^Q\rightarrow\mathcal{I}$ as follows. Recall that different Fourier
modes are orthogonal with respect to the BKS pairing, so that we can consider each Fourier mode labeled by $m\in P_L\cap\Z^n$
separately.
Consider the monomial sections $\hat\sigma^m_I\in {\cal H}^q_I$, $I\in {\cal I}$, $m\in P_L\cap\Z^n$. Define
$\nabla^Q_\psi$ by
$$
\langle \nabla^Q_\psi \hat\sigma^m_I, \hat \sigma^m_I\rangle = \frac{d}{ds}_{|_{s=0}}
\langle\hat\sigma^m_{I+s\psi}, \hat\sigma^m_I\rangle,
$$
where $\psi\in C^\infty (P_X)$ with $\psi(p)=0$ and where
$I+s\psi$ denotes the complex structure defined by
the symplectic potential $g_I+s\psi$ for $|s|$ sufficiently small. Note
that, by construction, $\nabla^{Q}$ is unitary.

A final implication of the fact that all of the complex structures arising
from points in $\mathcal{I}$ have the same \textquotedblleft
phases\textquotedblright\ (that is, of Lemma \ref{lemma:I-indep}) and of the
consequent reality of the BKS pairing is the following theorem.

\begin{theorem}
The global frame for ${\cal H}^Q\to {\cal I}$ given
by $\{\hat \sigma^m_I / ||\hat \sigma^m_I||\}_{m\in P_L\cap \Z^n}$
is horizontal with respect to $\nabla^Q$. Therefore, the connection $\nabla^Q$ is flat.
\end{theorem}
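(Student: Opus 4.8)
The plan is to reduce everything to a statement about a line bundle carrying a unitary connection and then exploit the reality of the BKS pairing. By Proposition~\ref{prop:BKS}, for \emph{any} two toric complex structures the BKS pairing is diagonal in the index $m\in P_L\cap\Z^n$; hence the quantum bundle splits as an orthogonal direct sum ${\cal H}^Q=\bigoplus_{m\in P_L\cap\Z^n}{\cal H}^{Q,m}$ of line bundles, each trivialized by the global frame $\hat\sigma^m_I$, and $\nabla^Q$ preserves this splitting because it is defined mode by mode (equivalently, it is the orthogonal projection of the trivial connection and the weight decomposition is orthogonal for every $I$). So it suffices to show that $\hat\sigma^m_I/\|\hat\sigma^m_I\|$ is $\nabla^Q$-parallel inside the line bundle ${\cal H}^{Q,m}$ for each fixed $m$.

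First I would write down the connection $1$-form of $\nabla^Q$ in the frame $\hat\sigma^m_I$. Since $\langle\,\cdot\,,\hat\sigma^m_I\rangle$ is nondegenerate on the one-dimensional fiber, the defining relation forces
\[
\nabla^Q_\psi\hat\sigma^m_I=\theta^m(\psi)\,\hat\sigma^m_I,\qquad
\theta^m(\psi)=\frac{1}{\|\hat\sigma^m_I\|^2}\,\frac{d}{ds}\Big|_{s=0}\langle\hat\sigma^m_{I+s\psi},\hat\sigma^m_I\rangle .
\]
By Proposition~\ref{prop:BKS}, $\langle\hat\sigma^m_{I+s\psi},\hat\sigma^m_I\rangle=\int_{P_X}e^{-h_m^{I+s\psi}-h_m^I}\sqrt{\det\big((G_{I+s\psi}+G_I)/2\big)}\,dx$, which is a \emph{real} (indeed positive) number, since the integrand is real and positive, cf.\ (\ref{eqn:Kpairing}). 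Differentiating under the integral sign — legitimate by the same compactness-plus-regularity argument used in Lemma~\ref{lemma:L2asymp} and Theorem~\ref{thm:sec_asymps}, the integrand being smooth in $s$ near $0$ and integrable up to $\partial P_X$ precisely because $m\in P_L$ — one concludes that $\theta^m$ is a \emph{real-valued} $1$-form on ${\cal I}$.

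Now combine this with unitarity. Since $\nabla^Q$ is unitary (by construction) with respect to the hermitian norm $\|\cdot\|$, differentiating $\|\hat\sigma^m_I\|^2=\langle\hat\sigma^m_I,\hat\sigma^m_I\rangle$ gives $d\log\|\hat\sigma^m_I\|^2=\theta^m+\overline{\theta^m}=2\Re\theta^m$; as $\theta^m$ is real this yields $\theta^m=\tfrac12\,d\log\|\hat\sigma^m_I\|^2$. Writing $\tilde\sigma^m_I:=\hat\sigma^m_I/\|\hat\sigma^m_I\|$ and using the Leibniz rule,
\[
\nabla^Q\tilde\sigma^m_I=\Big(d\big(\|\hat\sigma^m_I\|^{-1}\big)\|\hat\sigma^m_I\|+\theta^m\Big)\otimes\tilde\sigma^m_I=\Big(-\tfrac12\,d\log\|\hat\sigma^m_I\|^2+\theta^m\Big)\otimes\tilde\sigma^m_I=0 .
\]
Hence $\{\tilde\sigma^m_I\}_{m\in P_L\cap\Z^n}$ is a global $\nabla^Q$-parallel frame; in this frame the connection matrix of $\nabla^Q$ vanishes identically, so its curvature is $F_{\nabla^Q}=d0+0\wedge0=0$, i.e.\ $\nabla^Q$ is flat.

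The only point with genuine content — and the step I expect to be the crux — is the reality of $\theta^m$, which hinges entirely on the fact (Lemma~\ref{lemma:I-indep}, reflected in (\ref{eqn:Kpairing}) and (\ref{eqn:normKBKS})) that all toric canonical bundles share the same $U(1)$-part, so the phases drop out of the BKS pairing and it becomes a real integral; everything else is formal bookkeeping for a unitary connection on a line bundle. A secondary, routine point is the justification of differentiation under the integral sign near $\partial P_X$, handled exactly as in the Laplace-approximation arguments of Section~\ref{sec:families}.
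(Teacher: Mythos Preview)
Your argument is correct and follows essentially the same route as the paper: both proofs rest on the one-dimensionality of each Fourier sector together with the reality of the BKS pairing (Lemma~\ref{lemma:I-indep}, equation~(\ref{eqn:Kpairing})) and the unitarity of $\nabla^Q$. The paper compresses your computation by differentiating $\langle\tilde\sigma^m_{I+s\psi},\tilde\sigma^m_{I+s\psi}\rangle=1$ directly and using reality to equate the two product-rule terms, whereas you extract the connection $1$-form $\theta^m$ in the unnormalized frame, identify it with $\tfrac12\,d\log\|\hat\sigma^m_I\|^2$, and then apply Leibniz; these are equivalent rearrangements of the same identity.
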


\begin{proof}
We have, from the reality of the BKS pairing, for each Fourier mode $m\in P_L\cap\Z^n$ and for
any $\psi\in C^\infty(P_X)$ with $\psi(p)=0$,
$$
0= \frac{d}{ds}_{|_{s=0}} \langle \frac{\hat\sigma_{I+s\psi}^m}{||\hat \sigma^m_{I+s\psi}||},
\frac{\hat\sigma_{I+s\psi}^m}{||\hat \sigma^m_{I+s\psi}||}\rangle = 2 \langle
\nabla^Q_\psi \frac{\hat\sigma_{I}^m}{||\hat \sigma^m_I||}, \frac{\hat\sigma_{I}^m}
{||\hat \sigma^m_I||}\rangle,$$
whence $\nabla^Q_\psi \frac{\hat\sigma_{I}^m}{||\hat \sigma^m_I||}=0$.
\end{proof}

\section{Appendix: The square root of $K$}

For toric varieties, the following proposition explains how the existence of
$\sqrt{K}$ depends on the combinatorics of the fan $\Sigma$ associated to $X$. Note also that, in
general, if a square root of the canonical bundle exists then there may be
many choices of square root, and they are parameterized by $H^{1}(X,\mathbb{Z}_{2})$.
For toric varieties, $H^{1}(X,\mathbb{Z}_{2})=\{0\},$ and
so if it exists, $\sqrt{K}$ is unique.

Let $\{\nu_j\}_{j=1,\dots,r}$ be primitive generators of the 1-dimensional cones in the fan $\Sigma$. Let $L$ be
a holomorphic line bundle on $X$. The divisor of a torus-invariant meromorphic
section of $L$ on any holomorphic vertex
chart $U_v$ determines the section up to multiplicative constant. Indeed,
a torus-invariant principal divisor is of the form
$$
\sum_{i=1}^r \langle \alpha, \nu_j\rangle D_j,
$$
for $\alpha \in \R^n.$
If, say, $\nu_1,\dots,\nu_n$ are the generators associated to $U_v$, which form a basis of $\Z^n$, then the restriction
of the principal divisor to $U_v$  determines it completely since $\alpha$ is fixed by its inner products with
$\nu_1,\dots,\nu_n$.
Such a line bundle has a system of meromorphic locally trivializing sections $\{\mathbf{1}_v^L\}$ on the
holomorphic vertex charts, as in
Section \ref{subsec:line-bundles}, defined uniquely (up to constants) by the property that
$\operatorname{div}(\mathbf{1}_v^L)_{\vert_{U_v}}=0$.

If $c_1(X)$ is even then $\sqrt{K}$ exists and it has a system of holomorphic trivializations
on the holomorphic vertex charts given by $\{(U_v,\mathbf{1}_v)\}$, where $\mathbf{1}_v:= \mathbf{1}_v^{\sqrt{K}}$.
Then, $\{(U_v,\mathbf{1}_v\otimes \mathbf{1}_v)\}$ gives
a system of trivializing sections for $K$ and we have, up to an irrelevant constant,
$\mathbf{1}_v\otimes\mathbf{1}_v =dW_v$. Therefore, the sections $dW_v$ have even divisors and
we can write $\mathbf{1}_v= \sqrt{dW_v}$ where
$\{(U_v,\sqrt{dW_v})\}$ is a system of holomorphic trivializing sections on the holomorphic vertex charts for $\sqrt{K}$. We therefore have

\begin{proposition}\label{evendiv}
If $c_1(X)$ is even, then the sections $dW_v$ have even divisors and $\{(U_v,\sqrt{dW_v})\}$ is a system of holomorphic
trivializations of $\sqrt{K}.$
\end{proposition}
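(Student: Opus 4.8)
The plan is to exploit the rigidity of torus-invariant trivializing sections on the vertex charts, which reduces the statement to a formal consequence of the existence of $\sqrt{K}$ recalled above. First I would recall two facts. Since $c_1(X)$ is even, $\sqrt{K}$ exists and is unique (the uniqueness because $H^1(X,\mathbb{Z}_2)=0$ for toric $X$). Moreover, as in the discussion preceding the statement, every holomorphic line bundle $E$ on $X$ admits, on each vertex chart $U_v$, a torus-invariant meromorphic trivializing section $\mathbf{1}^E_v$, characterized up to a nonzero constant by $\operatorname{div}(\mathbf{1}^E_v)\big|_{U_v}=0$: indeed $E$ is torus-equivariant, so its class has a torus-invariant representative $\sum_i b_i D_i$, and multiplying the tautological section of $\mathcal{O}(\sum_i b_i D_i)$ by the character $w^{-\sum_i b_i\xi_i}$ — where $\xi_1,\dots,\xi_n\in M$ is the $\mathbb{Z}$-basis dual to the normals $\nu_1,\dots,\nu_n$ meeting at $v$ — produces such a section. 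Uniqueness up to a constant holds because the divisor of the ratio of two such sections is principal, torus-invariant, and vanishes on $U_v$, hence is identically zero (a principal torus-invariant divisor $\sum_i\langle\nu_i,\alpha\rangle D_i$ is determined by the integers $\langle\nu_j,\alpha\rangle$, $j=1,\dots,n$, since the $\nu_j$ at $v$ form a lattice basis), so the ratio, having no zeros or poles, is a holomorphic function on the compact connected $X$ and therefore constant.

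The main step is then immediate. Apply the above to $E=\sqrt{K}$ and set $\mathbf{1}_v:=\mathbf{1}^{\sqrt{K}}_v$; then $\mathbf{1}_v^{\otimes 2}$ is a torus-invariant meromorphic section of $K=(\sqrt{K})^{\otimes 2}$, with divisor $2\operatorname{div}(\mathbf{1}_v)$ and with $\operatorname{div}(\mathbf{1}_v^{\otimes 2})\big|_{U_v}=0$. On the other hand, $dW_v=w_v^{\mathbf{1}}\,dZ_v$ is a global meromorphic section of $K$ — the rational function $w_v^{\mathbf{1}}$ times the global meromorphic section $dZ_v$ (proportional to $dZ$) — whose divisor is a $\mathbb{Z}$-combination of the $D_i$, hence torus-invariant, and which by Lemma \ref{lemma:canonicalconnections} is holomorphic and nowhere vanishing on $U_v$. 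By the uniqueness just established, $dW_v=c_v\,\mathbf{1}_v^{\otimes 2}$ for some $c_v\in\mathbb{C}^\ast$, so $\operatorname{div}(dW_v)=2\operatorname{div}(\mathbf{1}_v)$ is an even divisor.

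Finally, fixing a square root $c_v^{1/2}$, the section $\sqrt{dW_v}:=c_v^{1/2}\,\mathbf{1}_v$ of $\sqrt{K}$ satisfies $(\sqrt{dW_v})^{\otimes 2}=dW_v$ and $\operatorname{div}(\sqrt{dW_v})\big|_{U_v}=\operatorname{div}(\mathbf{1}_v)\big|_{U_v}=0$, so it trivializes $\sqrt{K}$ over $U_v$; since the charts $U_v$ cover $X$, $\{(U_v,\sqrt{dW_v})\}$ is a system of holomorphic trivializations of $\sqrt{K}$, with transition functions (\ref{eqn:rootK_transfns}), whose integrality of exponents is exactly the even-divisor statement. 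The one point requiring genuine care is the rigidity invoked in the first step — that agreement of divisors over a single vertex chart forces global agreement — which rests on both the smoothness of $X$ (lattice-basis normals at each vertex) and its compactness (holomorphic functions on compact connected $X$ are constant). Alternatively one can bypass $\sqrt{K}$ altogether: by (\ref{eqn:Coxformula}) one has $\operatorname{div}(dW_v)=\sum_{i>n}\big(\langle\nu_i,A_v^{-1}\mathbf{1}\rangle-1\big)D_i$, and $c_1(X)$ even is equivalent to the existence of $\alpha\in M$ with $\langle\nu_i,\alpha\rangle$ odd for all $i$, while $A_v^{-1}\mathbf{1}=\sum_j\xi_j\equiv\alpha\pmod 2$ in $M$ (the coefficients of $\alpha$ in the dual basis $\{\xi_j\}$ being the odd integers $\langle\nu_j,\alpha\rangle$), so every coefficient $\langle\nu_i,A_v^{-1}\mathbf{1}\rangle-1$ is even.
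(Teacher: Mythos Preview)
Your main argument is correct and is essentially the paper's own: both establish that a torus-invariant meromorphic section trivializing a line bundle over a vertex chart is unique up to a constant (via the observation that a principal torus-invariant divisor is determined by its restriction to any $U_v$), apply this to $\sqrt{K}$ to get $\mathbf{1}_v$, and then conclude $dW_v=c_v\,\mathbf{1}_v^{\otimes 2}$ by uniqueness, whence $\operatorname{div}(dW_v)$ is even. Your alternative direct computation of $\operatorname{div}(dW_v)=\sum_{i>n}(\langle\nu_i,A_v^{-1}\mathbf{1}\rangle-1)D_i$ together with the parity argument via an $\alpha\in M$ with all $\langle\nu_i,\alpha\rangle$ odd is also correct, and in fact anticipates exactly the computation the paper carries out in the proof of the proposition immediately following this one.
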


As a consequence, we obtain the following useful criterion for the existence of $\sqrt{K}.$
For a vertex $v$, let us call vertex basis associated to $v$ to a basis of $\Z^n$ given by the primitive generators
of the 1-dimensional cones defining $v$.

\begin{proposition}
$K$ admits a square root if and only if for each 1-dimensional cone $F$ in $\Sigma$
the sum of the coordinates of the primitive generator $\nu_{F}$ expressed in any one of the vertex basis is odd.
\end{proposition}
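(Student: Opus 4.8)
The plan is to reduce the existence of $\sqrt K$ to a single congruence on an integral vector, and then unwind that congruence into the stated combinatorial criterion by elementary linear algebra modulo $2$.

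First I would recall that $K\cong\mathcal O(-D_1-\cdots-D_r)$ and that for a smooth complete toric variety $\operatorname{Pic}(X)\cong H^2(X;\Z)$ is a finitely generated \emph{free} abelian group (this uses $H^1(X,\mathcal O_X)=H^2(X,\mathcal O_X)=0$ together with torsion-freeness of $H^2(X;\Z)$; equivalently, as already noted at the start of this Appendix, $H^1(X,\Z_2)=0$, so a holomorphic square root of a line bundle exists, and is then unique, precisely when its first Chern class is divisible by $2$). Hence $\sqrt K$ exists if and only if the class $[D_1+\cdots+D_r]$ is divisible by $2$ in $\operatorname{Pic}(X)$. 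Since the $D_j$ generate $\operatorname{Pic}(X)$, this holds if and only if there are integers $c_1,\dots,c_r$ with $\sum_j(1-2c_j)D_j$ principal, and by the description of torus-invariant principal divisors recalled just above (cf. (\ref{eqn:Coxformula})), $\sum_j a_jD_j$ is principal exactly when $a_j=\langle m,\nu_j\rangle$ for some $m\in M$ and all $j$. Therefore
\[
\sqrt K \text{ exists}\iff \exists\, m\in M \text{ with } \langle m,\nu_j\rangle \text{ odd for all } j=1,\dots,r.
\]

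Next I would build the bridge to vertex bases. Fix a vertex $v$; its vertex basis is exactly the set of rows $\nu_{j_1},\dots,\nu_{j_n}$ of $A_v\in GL_n(\Z)$, which form a $\Z$-basis of $N$. A short computation shows that the coordinate vector of a primitive generator $\nu_F$ in this basis is $(A_v^{-1})^{t}\nu_F$, so the \emph{sum} of those coordinates is $\mathbf 1^{t}(A_v^{-1})^{t}\nu_F=\langle A_v^{-1}\mathbf 1,\nu_F\rangle$. Writing $m_v:=A_v^{-1}\mathbf 1\in M$, the coordinate sum of $\nu_F$ in the $v$-vertex basis is thus $\langle m_v,\nu_F\rangle$; for the cones $\nu_{j_k}$ at $v$ this equals the $k$-th entry of $A_vm_v=\mathbf 1$, namely $1$, so the criterion is automatic at the cones adjacent to $v$. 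For the ``if'' direction of the Proposition, if all coordinate sums are odd for one vertex basis then $m:=m_v$ has $\langle m,\nu_j\rangle$ odd for every $j$, so $\sqrt K$ exists by the displayed equivalence. For the ``only if'' direction, suppose $\sqrt K$ exists and pick $m\in M$ with all $\langle m,\nu_j\rangle$ odd; then for any vertex $v$ the numbers $\langle m_v-m,\nu_{j_k}\rangle=1-\langle m,\nu_{j_k}\rangle$ are even for $k=1,\dots,n$, and since $\{\nu_{j_k}\}$ is a $\Z$-basis of $N$ this forces $m_v-m\in 2M$, whence $\langle m_v,\nu_F\rangle\equiv\langle m,\nu_F\rangle\equiv 1\pmod 2$ for every $1$-cone $F$. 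By the previous line this says exactly that every primitive generator has odd coordinate sum in the $v$-vertex basis, for every vertex $v$; in particular the parity does not depend on which vertex basis is chosen, which is what makes the statement well-posed.

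I expect the only real point requiring care to be the first paragraph --- justifying, for the toric varieties in question, that existence of a holomorphic square root of a line bundle is equivalent to $2$-divisibility of its Chern class --- together with keeping the $M$/$N$ duality and the matrix transpositions straight; the mod-$2$ linear algebra in the second paragraph is routine. As a cross-check one may also argue directly through the transition functions (\ref{eqn:rootK_transfns}): $\sqrt K$ exists iff the exponents $\tfrac12(A_{v'}A_v^{-1}\mathbf 1-\mathbf 1)$ are integral for every pair of vertices $v,v'$ (the monomial cocycle identity for these exponents being forced once they are integral, by (\ref{eqn:wvvprime})), and integrality of the $k$-th such exponent is precisely oddness of $\langle m_v,\nu_{j'_k}\rangle$, i.e. of the coordinate sum of the $k$-th generator at $v'$ expressed in the $v$-vertex basis --- which recovers the same statement.
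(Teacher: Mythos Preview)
Your argument is correct and complete. It differs genuinely from the paper's own proof, however, so a short comparison is worthwhile.

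The paper argues through the concrete meromorphic sections $dW_v=w_v^{\mathbf 1}\,dZ_v$ of $K$: by Proposition~\ref{evendiv}, $\sqrt K$ exists if and only if every $dW_v$ has even divisor, and then one computes directly
\[
\operatorname{div}(dW_v)=\operatorname{div}(w_v^{\mathbf 1})+\operatorname{div}(dZ_v)
=\sum_{i=n+1}^r\Bigl(\sum_{j=1}^n\nu_i^j-1\Bigr)D_i,
\]
where $\nu_i^j$ is the $j$-th coordinate of $\nu_i$ in the vertex basis at $v$. Evenness of this divisor is visibly the stated oddness condition. Your route instead abstracts the existence question to $2$-divisibility of $[D_1+\cdots+D_r]$ in the (free) Picard group, rewrites that via (\ref{eqn:Coxformula}) as the existence of $m\in M$ with all $\langle m,\nu_j\rangle$ odd, and then identifies the coordinate sum of $\nu_F$ in the $v$-basis with $\langle A_v^{-1}\mathbf 1,\nu_F\rangle$, reducing everything to mod-$2$ linear algebra. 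Your approach has the advantage of making explicit that the parity of the coordinate sum is independent of the vertex basis (the paper disposes of this with ``Clearly, this happens for any $v$''), and it avoids the dependence on Proposition~\ref{evendiv}. The paper's approach, on the other hand, stays closer to the geometric objects actually used elsewhere (the sections $dW_v$ and their divisors) and requires no appeal to the structure of $\operatorname{Pic}(X)$. Your final cross-check via the integrality of the exponents in (\ref{eqn:rootK_transfns}) is essentially the paper's viewpoint rephrased.
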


\begin{proof}
We have seen that the existence of $\sqrt{K}$ is equivalent to having a system of trivializing sections
$\{\sqrt{dW_v}\}$ where the $dW_v$ all have even divisors. Choose a vertex $v$ and let $\nu_1,\dots\nu_n$ be a
vertex basis for $v$. Then, using Lemma \ref{lemmacox} and the fact that $dZ_v$ is a constant multiple of $dZ$,
$$
\operatorname{div}(dW_{v}) = \operatorname{div}(w_{v}^{\mathbf{1}})+ \operatorname{div} (dZ_{v}) =
\sum_{i=n+1}^r \left(\sum_{j=1}^n \nu_i^j \right) D_i - \sum_{i=n+1}^r D_i,
$$
where $\nu_i^j$ is the $jth$ coordinate of the vector $\nu_i$ in the vertex basis. Therefore, this divisor is even iff
$\sum_{j=1}^n \nu_i^j$ is odd for all $i$. Clearly, this happens for any $v$ and the proposition follows.
\end{proof}

\bigskip\bigskip

\textbf{{\large Acknowledgements:}} We wish to thank Thomas Baier for
extensive discussions on the subject of the paper. We also wish to thank the referee
for several helpful comments which helped improving the clarity of the text. The authors are partially
supported by the Center for Mathematical Analysis, Geometry and Dynamical
Systems, IST, through the Funda\c{c}\~{a}o para a Ci\^{e}ncia e a Tecnologia
(FCT / Portugal).

\bigskip


\begin{thebibliography}{FMMN06}

\bibitem[Abr03]{Abreu03}
M.~Abreu, \emph{K\"ahler geometry of toric manifolds in symplectic
  coordinates}, Symplectic and contact topology: interactions and perspectives
  ({T}oronto, {ON}/{M}ontreal, {QC}, 2001), Fields Inst. Commun., vol.~35,
  Amer. Math. Soc., Providence, RI, 2003, pp.~1--24.

\bibitem[AGL07]{Andersen-Gammelgaard-Lauridsen}
J.~Andersen, N.~Gammelgaard, and M.~Lauridsen, \emph{Hitchin's connection in
  half-form quantization}, preprint: arXiv:0711.3995.

\bibitem[APW91]{Axelrod-DellaPietra-Witten}
S.~Axelrod, S.~Della Pietra, and E.~Witten, \emph{Geometric quantization of
  {C}hern-{S}imons gauge theory}, J. Differential Geom. \textbf{33} (1991),
  no.~3, 787--902.

\bibitem[BFMN11]{Baier-Florentino-Mourao-Nunes}
T.~Baier, C.~Florentino, J.~M. Mour{\~a}o, and J.~P. Nunes, \emph{Toric
  {K}\"ahler metrics seen from infinity, quantization and compact tropical
  amoebas}, J. Differential Geometry,
\textbf{89} (2011), 411-454.


\bibitem[BMN10]{Baier-Mourao-Nunes}
T.~Baier, J.~M. Mour{\~ao}, and J.~P. Nunes, \emph{Quantization of abelian
  varieties: distributional sections and the transition from {K}\"ahler to real
  polarizations}, J. Funct. Anal. \textbf{258} (2010), 3388--3412.

\bibitem[C78]{C78} J. Czyz, ``On some approach to geometric quantization'', in Differential Geometric Methods
in Mathematical Physics II, Lec. Notes in Mathematics \textbf{676} (1978), 315-328.

\bibitem[CdS]{CdS} A. Cannas da Silva, ``Symplectic toric manifolds'', in
Symplectic Geometry of Integrable Hamiltonian Systems, Birk\"auser series Advanced Courses in
Mathematics - CRM Barcelona, Birkh\"auser (Springer), 2003.

\bibitem[CK]{CK} D.~A.Cox and S. Katz, ``Mirror symmetry and algebraic geometry'', Math. Surveys
and Monographs \textbf{68}, Amer. Math. Soc. 1991.

\bibitem[CLS11]{Cox}
D.~A. Cox, J.~B. Little, and H.~Schenck, \emph{Toric varieties},
Graduade Studies in Mathematics \textbf{124}, AMS,  2011.

\bibitem[D88]{D88} T. Delzant, ``Hamiltoniens p\'eriodiques et images convexes de l'application moment'',
Bull. S.M.F. \textbf{116} (1988) 315-339.

\bibitem[DP09]{DP09} J. Duistermaat and A. Pelayo, ``Reduced phase space and toric variety coordinatizations
of Delzant spaces'', Math. Proc. Camb. Phil. Soc. \textbf{146} (2009) 695.

\bibitem[F]{F} W. Fulton, ``Introduction to toric varieties'', Annals of Math. Studies \textbf{131},
Princeton Univ. Press, Princeton, New Jersey, 1993.

\bibitem[FMMN05]{Florentino-Matias-Mourao-Nunes05}
C.~Florentino, P.~Matias, J.~M. Mour{\~a}o, and J.~P. Nunes, \emph{Geometric
  quantization, complex structures and the coherent state transform}, J. Funct.
  Anal. \textbf{221} (2005), no.~2, 303--322.

\bibitem[FMMN06]{Florentino-Matias-Mourao-Nunes06}
C.~Florentino, P.~Matias, J.~M. Mour{\~ao}, and J.~P. Nunes, \emph{On the {BKS}
  pairing for {K}\"ahler quantizations of the cotangent bundle of a {L}ie
  group}, J. Funct. Anal. \textbf{234} (2006), no.~1, 180--198.

\bibitem[Gui94]{Guillemin94}
V.~Guillemin, \emph{K\"ahler structures on toric varieties}, J. Differential
  Geom. \textbf{40} (1994), no.~2, 285--309.

\bibitem[GV]{GV} I.Gelfand, N.Vilenkin, \emph{Generalized functions, Vol. 4.
Applications of harmonic analysis}, Academic Press, New York 1964.

\bibitem[Hal02]{Hall02}
Brian~C. Hall, \emph{Geometric quantization and the generalized
  {Segal--Bargmann} transform for {Lie} groups of compact type}, Comm. Math.
  Phys. \textbf{226} (2002), 233--268.

\bibitem[Ham07]{Hamilton07}
M.~Hamilton, \emph{Locally toric manifolds and singular {Bohr-Sommerfeld}
  leaves}, Mem. Amer. Math. Soc. \textbf{207} (2010), no. 971.

\bibitem[Hit90]{Hitchin90}
N.~J. Hitchin, \emph{Flat connections and geometric quantization}, Comm. Math.
  Phys. \textbf{131} (1990), no.~2, 347--380.

\bibitem[H{\"o}r90]{Hormander}
L.~H{\"o}rmander, \emph{The analysis of linear partial differential operators
  {I}}, Springer-Verlag, 1990.

\bibitem[Huy05]{Huybrechts}
D.~Huybrechts, \emph{Complex geometry}, Universitext, Springer-Verlag, Berlin,
  2005.

\bibitem[KW06]{Kirwin-Wu06}
W.~D. Kirwin and S.~Wu, \emph{Geometric quantization, parallel transport and
  the {Fourier} transform}, Comm. Math. Phys. \textbf{266} (2006), 577--594.

\bibitem[LS10]{Lempert-Szoke10}
L.~Lempert and R.~Sz\"{o}ke, \emph{Uniqueness in geometric quantization},
  arXiv:1004.4863.

\bibitem[Mor07]{Moroianu}
A.~Moroianu, \emph{Lectures on {K}\"ahler geometry}, London Mathematical
  Society Student Texts, vol.~69, Cambridge University Press, Cambridge, 2007.

\bibitem[Raw79]{Rawnsley79}
J.~H. Rawnsley, \emph{A nonunitary pairing of polarizations for the {K}epler
  problem}, Trans. Amer. Math. Soc. \textbf{250} (1979), 167--180.

\bibitem[\'S75]{Sniatycki}
J.~\'Sniatycki, \emph{On cohomology groups appearing in geometric
  quantization}, in Lect. Notes in Math. \textbf{570} (1975).

\bibitem[Wei04]{Weinstein04}
A.~Weinstein, \emph{The {M}aslov gerbe}, Lett. Math. Phys. \textbf{69} (2004),
  3--9.

\bibitem[Woo91]{Woodhouse}
N.~Woodhouse, \emph{Geometric quantization}, Oxford University Press, Oxford,
  1991.

\bibitem[Wu10]{Wu10}
S.~Wu, \emph{Projective flatness in the quantization of bosons and fermions},
  preprint: arXiv:1008.5333.

\end{thebibliography}

\end{document}